\documentclass[a4paper,11pt]{article}
\usepackage{array}
\usepackage{amsmath,amscd,amssymb,amsthm} 
\usepackage{latexsym}
\usepackage{epsfig}
\usepackage{enumerate}
\usepackage{xcolor}

\theoremstyle{plain}
\newtheorem{lemma}{Lemma}[section]

\newtheorem{proposition}[lemma]{Proposition}
\newtheorem{theorem}[lemma]{Theorem}

\newtheorem{definition}[lemma]{Definition}
\theoremstyle{definition}

\theoremstyle{remark}
\newtheorem{remark}[lemma]{Remark}
\newcommand{\rk}{\mathrm{rk}}
\newcommand{\pp}{{\mathbb P}}

\newcommand{\PP}{\mathbb P}
\newcommand{\ZZ}{\mathbb Z}
\newcommand{\cB}{\mathcal B}
\newcommand{\cE}{\mathcal E}
\newcommand{\cF}{\mathcal F}
\newcommand{\cL}{\mathcal L}
\newcommand{\cO}{\mathcal O}

\newcommand{\cC}{\mathcal C}
\newcommand{\cG}{\mathcal G}
\newcommand{\cA}{\mathcal A}
\newcommand{\cV}{\mathcal V}
\newcommand{\cM}{\mathcal M}
\newcommand{\cI}{{\mathcal I}}

\newcommand{\Pic}{\mathop{\mathrm {Pic}}\nolimits}
\newcommand{\Proj}{\mathop{\null\mathrm {Proj}}\nolimits}

\newcommand{\Sym}{\mathop{\mathrm {Sym}}\nolimits}
\newcommand{\Hom}{\mathop{\mathrm {Hom}}\nolimits}

\newcommand {\sHom}{\mathcal{H}\kern -0.25ex{\mathit om}}
\newcommand {\sExt}{\mathcal{E}\kern -0.25ex{\mathit xt}}
\def\red#1{\textcolor{red}{#1}}
\def\H#1{\mathrm{H}^{#1}}
\def\h#1{\mathrm{h}^{#1}}

\makeatletter
\renewcommand\@makefntext[1]{%
\setlength\parindent{1em}%
\noindent
\mbox{\@thefnmark}{#1}}
\makeatother
\begin{document}
\title{H-Instanton Bundles on Three-Dimensional Smooth Toric Varieties with Picard Number Two}
\author{Ozhan Genc and Francesco Malaspina}
\maketitle
\def\thefootnote{\null}
\footnote{Mathematics Subject Classification 2020: 14F05, 14J60.
\\ Keywords: Instanton bundles, projective bundles,
Ulrich bundles.}

\begin{abstract}\noindent
We study $H$-instanton bundles on the infinite family of smooth three-dimensional varieties $X_e=\mathbb{P}(\cO_{\PP^2}\oplus\cO_{\PP^2}(e))$, for $e \geq 0$. We provide two distinct monadic descriptions of $H$-instanton bundles on $X_e$, generalizing the classical monads on $\mathbb P^3$. We then characterize $H$-instanton bundles with second Chern class supported in a single degree, and investigate their existence and moduli spaces. Finally, for $e\leq 3$, we prove the existence of  $H$-instanton bundles for all admissible second Chern classes. These results extend previous constructions on specific cases and contribute to the study of instanton bundles on threefolds with higher Picard number.
\end{abstract}

\section*{Introduction}\label{sect:intro}

Mathematical instanton bundles on $\mathbb P^3$ arose as the algebraic counterpart to the
(anti)self-dual connections which are solutions to the Yang-Mills equations on $S^4$
. The
link between these two apparently distant objects was achieved through twistor theory,
as it was developed by R. Penrose. Indeed $\mathbb P^3$ is the twistor space of $S^4$. Prompted by
this fact, mathematical instanton bundles on $\mathbb P^3$ have been the object of study for the
last three decades, becoming one of the milestones in algebraic geometry. An instanton
bundle on $\mathbb P^3$ is a stable rank two bundle $E$ with $c_1(E)=0$ and satisfying the so called
instantonic condition
$$h^1(\PP^3,E(-2))=0.$$
In \cite{Fa} (see also [28] in the case of index two)  is given the following definition of instanton bundle on Fano
threefold X with Picard number one: let us write the canonical divisor of $(X, H_X)$ as
$\omega_X =-(2qX+r_X )H_X$ where $q_X\geq 0$ and $0\leq r_X\leq 1$, then an instanton bundle E on X
is a $\mu$-stable rank two vector bundle with $c_1(E) =-r_X$ (hence normalized) and
$$H^1(X, E(-q_X H_X ))=0.$$
So $G=E(-q_X H_X )$ satisfies $G\cong G^\vee\otimes\omega_X$ and by Serre duality $H^1(X, G)=H^2(X, G)=0$.
Apart from this definition there have been several attempts to generalize instanton
bundles to Fano varieties of higher Picard number. In \cite{MMP} the authors gave the following definition of instanton bundle over the Flag variety $F(0,1,2)$:
a rank  two vector bundle $E$ on the Fano threefold $F(0,1,2)$ is an instanton bundle of charge $k$ if the following properties hold:
\begin{itemize}
\item $c_1(E)=0, \ c_2(E)=kh_1h_2$;
\item $h^0(E)=0$ and $E$ is $\mu$-semistable;
\item $h^1(X, E(-H_X))=0$;
\end{itemize}
This definition has been adapted in \cite{AM}, \cite{CCGM} and \cite{CG} to other examples of Fano threefolds with Picard number higher than one. One can observe that in all the cases, by
increasing the degree of the second Chern class, the intermediate cohomology modules
have more generators and become more complicated. Recall that a coherent sheaf E is
aCM (arithmetically Cohen-Macauly) if all the intermediate cohomology groups vanish
or equivalently if the module E of global sections of E is a maximal Cohen-Macaulay
module. So, as the charge of instanton bundles grows, instanton bundles become further
from being aCM. Among aCM sheaves, a special role is played by Ulrich sheaves. These
shaves are characterized by the linearity of the minimal graded free resolution over the
polynomial ring of their module of global sections. Ulrich sheaves, originally studied for
computing Chow forms, conjecturally exist over any variety (we refer to \cite{ES}).

In \cite{AM2} a notion of instanton bundles related to the previous ones is introduced on
any polarized projective threefold $(X, H_X)$.  Let $K_X$ be the
canonical divisor,
A rank two vector bundle $E$ on $X$ is an \emph{$H$-instanton bundle} of charge $k$ if the following properties hold:
\begin{itemize}
\item $c_1(E)=2H_X+K_X$ and $\deg(c_2(E))=k$;
\item $h^0(X,E)=0$ and $E$ is $\mu$-semistable;
\item $h^1(X,E(-H_X))=0$;
\end{itemize}

In \cite{AM2} has been studied, H-instanton
bundles on  three-dimensional rational normal scrolls since this is a family of threefolds with infinite
different canonical divisors $K_X$ and with a nice description of the derived category of
coherent sheaves.

In this paper we deal with the infinite family $X_e=\mathbb{P}(\cO_{\PP^2}\oplus\cO_{\PP^2}(e))$, namely the remaining three-dimesional smooth toric varieties with Picard number two (cfr \cite{Kl}). The case of $e=0$ has already been studied in \cite{AM2} (see also \cite{CG} and \cite{GJ}), the case of $e=1$ in \cite{CCGM} and the case of $e=2$ in \cite{GJ}. As first results, in Section 3 we give two different monadic descriptions
of each H-instanton bundle (which can be considered as the analog of the monads on
$\PP^3$ and we characterize the H-instantons $E$ such that $c_2(E)=\beta f^2$ (i.e. the second
Chern class is concentrated in a single degree) showing that they are always realized as cohomology of a  pullback of a linear monad.
After this, in Section 4 we deal with the existence of H-instanton bundles with second Chern classes $\alpha \xi f$ and the
description of their moduli spaces. Finally, in Section 5, we show the existence of H-instanton bundles for any admissible second Chern classes when $e \leq 3$.

\section{Preliminaries}\label{sect:scrolls}

\begin{definition}
A coherent sheaf $\cE$ on a projective variety $X_e$ with a fixed ample line bundle $\cO_X(1)$ is called {\it arithmetically Cohen-Macaulay} (for short, aCM) if it is locally Cohen-Macaulay and $H^i(\cE(t))=0$ for all $t\in \ZZ$ and $i=1, \ldots, \dim (X)-1$.
\end{definition}



\begin{definition}
For an {\it initialized} coherent sheaf $\cE$ on $X$, i.e. $h^0(\cE(-1))=0$ but $h^0(\cE)\ne 0$, we say that $\cE$ is an {\it Ulrich sheaf} if it is aCM and $h^0(\cE)=\deg (X)\mathrm{rank}(\cE)$.
\end{definition}

Given a smooth projective variety $X$, let $D^b(X)$ be the the bounded derived category of coherent sheaves on $X$. An object $E \in D^b(X)$ is called {\it exceptional} if $Ext^\bullet(E,E) = \mathbb C$.
A set of exceptional objects $\langle E_0, \ldots, E_n\rangle$ is called an {\it exceptional collection} if $Ext^\bullet(E_i,E_j) = 0$ for $i > j$. An exceptional collection is said to be {\it full} when $Ext^\bullet(E_i,A) = 0$ for all $i$ implies $A = 0$, or equivalently when $Ext^\bullet(A, E_i) = 0$ does the same.

\begin{definition}\label{def:mutation}
Let $E$ be an exceptional object in $D^b(X)$.
Then there are functors $\mathbb L_{E}$ and $\mathbb R_{E}$ fitting in distinguished triangles
$$
\mathbb L_{E}(T) 		\to	 Ext^\bullet(E,T) \otimes E 	\to	 T 		 \to	 \mathbb L_{E}(T)[1]
$$
$$
\mathbb R_{E}(T)[-1]	 \to 	 T 		 \to	 Ext^\bullet(T,E)^* \otimes E	 \to	 \mathbb R_{E}(T)	
$$
The functors $\mathbb L_{E}$ and $\mathbb R_{E}$ are called respectively the \emph{left} and \emph{right mutation functor}.
\end{definition}


The collections given by
\begin{align*}
E_i^{\vee} &= \mathbb L_{E_0} \mathbb L_{E_1} \dots \mathbb L_{E_{n-i-1}} E_{n-i};\\
^\vee E_i &= \mathbb R_{E_n} \mathbb R_{E_{n-1}} \dots \mathbb R_{E_{n-i+1}} E_{n-i},
\end{align*}
are again full and exceptional and are called the \emph{right} and \emph{left dual} collections. The dual collections are characterized by the following property; see \cite[Section 2.6]{GO}.
\begin{equation}\label{eq:dual characterization}
Ext^k(^\vee E_i, E_j) = Ext^k(E_i, E_j^\vee) = \left\{
\begin{array}{cc}
\mathbb C & \textrm{\quad if $i+j = n$ and $i = k $} \\
0 & \textrm{\quad otherwise}
\end{array}
\right.
\end{equation}

\begin{theorem}[Beilinson spectral sequence]\label{thm:Beilinson}
Let $X$ be a smooth projective variety and with a full exceptional collection $\langle E_0, \ldots, E_n\rangle$ of objects for $D^b(X)$. Then for any $A$ in $D^b(X)$ there is a spectral sequence
with the $E_1$-term
\[
E_1^{p,q} =\bigoplus_{r+s=q} Ext^{n+r}(E_{n-p}, A) \otimes \mathcal H^s(E_p^\vee )
\]
which is functorial in $A$ and converges to $\mathcal H^{p+q}(A)$.
\end{theorem}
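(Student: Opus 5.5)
The plan is to obtain the spectral sequence as a resolution of the diagonal. Write $\Delta\subset X\times X$ for the diagonal and $p_1,p_2$ for the two projections. For any $A$, the Fourier--Mukai transform with kernel $\cO_\Delta$ is the identity:
$$A\cong Rp_{2*}\bigl(p_1^*A\otimes^{L}\cO_\Delta\bigr).$$
So I will look for a filtration of $\cO_\Delta$ whose graded pieces are exterior products of the collection with its right dual. The full exceptional collection $\langle E_0,\dots,E_n\rangle$ generates $D^b(X)$, and the boxes $E_i\boxtimes F_j$ generate $D^b(X\times X)$. Using the characterization \eqref{eq:dual characterization} of the dual collection, the standard Beilinson/Kapranov/Gorodentsev argument (cf.\ \cite{GO}) produces a Postnikov system, i.e.\ a convolution, whose terms are, up to the appropriate shifts, the objects $E_{n-p}^{*}\boxtimes E_p^\vee$ (with $E^*=R\sHom(E,\cO_X)$), and whose convolution is $\cO_\Delta$. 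Concretely, the terms are $(E_{n-p})^{*}\boxtimes E_p^\vee$ placed in degree $p$ after a global shift by $n$.

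To construct this tower, I will build it inductively via the semiorthogonal decompositions $\langle E_0,\dots,E_{k}\rangle$, $\langle E_{k+1},\dots,E_n\rangle$, using the projection functors. These projection functors are themselves given by kernels. Each step peels off one piece: the kernel of the projection onto $\langle E_k\rangle$ is $E_k^{*}\boxtimes(\text{dual object})$, and \eqref{eq:dual characterization} identifies that dual object with the corresponding $E^\vee$, including the shift $[i]$ that appears because $Ext^k(E_i,E_j^\vee)$ is concentrated in degree $k=i$. I expect this identification, with the indices and shifts fixed correctly, to be the main obstacle. My first move is to check that the candidate kernel represents the functor $T\mapsto Ext^\bullet(E_{n-p},T)\otimes E_p^\vee$, by testing it against the $E_j$ and applying \eqref{eq:dual characterization}. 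Since the $E_j$ generate, that check suffices.

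Once the resolution of the diagonal is available, I apply the functor $Rp_{2*}(p_1^*A\otimes -)$ to the Postnikov tower. This functor is exact, so it sends the tower to a tower converging to $A$ whose pieces are
$$Rp_{2*}\bigl(p_1^*(A\otimes E_{n-p}^{*})\otimes p_2^*E_p^\vee\bigr)\cong R\Gamma(A\otimes E_{n-p}^*)\otimes E_p^\vee = RHom(E_{n-p},A)\otimes E_p^\vee,$$
by the projection formula and flat base change. A filtered object (a Postnikov tower) gives a spectral sequence in the standard way, by applying the cohomology functor $\mathcal H^\bullet$, and this spectral sequence converges to $\mathcal H^{p+q}(A)$. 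The $E_1$ term is the cohomology sheaf of the $p$-th piece in total degree $q$. Since $\mathcal H^\bullet$ of a tensor product of a graded vector space with a complex splits, this gives
$$E_1^{p,q}=\bigoplus_{r+s=q}Ext^{n+r}(E_{n-p},A)\otimes\mathcal H^s(E_p^\vee),$$
where the $n$ in the exponent is absorbed by the global shift normalizing the tower. Functoriality in $A$ holds because every construction above is applied functorially to the fixed kernel tower.
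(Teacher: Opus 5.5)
Your argument is correct, but it takes a different route from the one behind the paper's statement. The paper gives no proof here and refers to \cite[Corollary 3.3.2]{RU}, \cite[Section 2.7.3]{GO} and \cite[Theorem 2.1.14]{BO}, where the sequence is built inside $D^b(X)$ itself.

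\textbf{The cited route.} By \eqref{eq:dual characterization}, $\mathrm{RHom}(E_{n-p},E_j^\vee)=\delta_{jp}\,\mathbb C[p-n]$. So the dual collection gives a semiorthogonal decomposition whose projection onto $\langle E_p^\vee\rangle$ is $T\mapsto \mathrm{RHom}(E_{n-p},T)\otimes E_p^\vee[n-p]$. Every $A$ therefore carries a functorial Postnikov tower with these graded pieces, and applying $\mathcal H^0$ gives the spectral sequence with $E_1^{p,q}=\mathcal H^{p+q}$ of the $p$-th piece. This also fixes the shift you left as a ``global normalization''.

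\textbf{Your route.} You construct the universal version of this tower, a generalized Beilinson resolution of $\cO_\Delta$, and specialize it with the Fourier--Mukai functor.
\begin{itemize}
\item The intrinsic route is more economical. It needs no product variety, no K\"unneth formula and no generation statement for $D^b(X\times X)$, and it works in any triangulated category with a full exceptional collection and a t-structure.
\item Your route produces one kernel tower for all $A$ at once. This makes functoriality in $A$ automatic and recovers Beilinson's original resolution by $\cO(p-n)\boxtimes\Omega^{n-p}(n-p)$ on $\PP^n$.
\end{itemize}

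\textbf{One point to make precise.} Triangles of functors do not lift by themselves to triangles of kernels. You should obtain the tower by decomposing $\cO_\Delta$ along the semiorthogonal decomposition of $D^b(X\times X)$ into the pieces $E_k^*\boxtimes D^b(X)$. These pieces are semiorthogonal by K\"unneth, and the decomposition is full because exterior products generate.
\begin{itemize}
\item The projection onto the $k$-th piece is $M\mapsto E_k^*\boxtimes Rp_{2*}(M\otimes p_1^*D_k)$, where $D_k$ satisfies $\mathrm{RHom}(E_j,D_k)=\delta_{jk}\mathbb C$.
\item By \eqref{eq:dual characterization}, $D_k=E_{n-k}^\vee[k]$.
\item Hence the $k$-th component of $\cO_\Delta$ is $E_k^*\boxtimes E_{n-k}^\vee[k]$. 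This is the precise content of your ``test against the $E_j$''.
\end{itemize}
Relatedly, a kernel $E_k^*\boxtimes D$ defines a functor with image in $\langle D\rangle$. So it is the projection onto a dual piece, not onto $\langle E_k\rangle$ as you wrote.
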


The statement and proof of Theorem \ref{thm:Beilinson} can be found both in  \cite[Corollary 3.3.2]{RU}, in \cite[Section 2.7.3]{GO} and in \cite[Theorem 2.1.14]{BO}.


Let us assume next that the full exceptional collection  $\langle E_0, \ldots, E_n\rangle$ contains only pure objects of type $E_i=\mathcal E_i^*[-k_i]$ with $\mathcal E_i$ a vector bundle for each $i$, and moreover the right dual collection $\langle E_0^\vee, \ldots, E_n^\vee\rangle$ consists of coherent sheaves. Then the Beilinson spectral sequence is much simpler since
\[
E_1^{p,q}=Ext^{n+q}(E_{n-p}, A) \otimes E_p^\vee=H^{n+q+k_{n-p}}(\mathcal E_{n-p}\otimes A)\otimes E_p^\vee.
\]

Note however that the grading in this spectral sequence applied for the projective space is slightly different from the grading of the usual Beilison spectral sequence, due to the existence of shifts by $n$ in the index $p,q$. Indeed, the $E_1$-terms of the usual spectral sequence are $H^q(A(p))\otimes \Omega^{-p}(-p)$ which are zero for positive $p$. To restore the order, one needs to change slightly the gradings of the spectral sequence from Theorem \ref{thm:Beilinson}. If we replace, in the expression
\[
E_1^{u,v} = \mathrm{Ext}^{v}(E_{-u},A) \otimes E_{n+u}^\vee=
H^{v+k_{-u}}(\mathcal E_{-u}\otimes A) \otimes \mathcal F_{-u}
\]
$u=-n+p$ and $v=n+q$ so that the fourth quadrant is mapped to the second quadrant, we obtain the following version (see \cite{AHMP}) of the Beilinson spectral sequence:

\begin{theorem}\label{use}
Let $X$ be a smooth projective variety with a full exceptional collection
$\langle E_0, \ldots, E_n\rangle$
where $E_i=\mathcal E_i^*[-k_i]$ with each $\mathcal E_i$ a vector bundle and $(k_0, \ldots, k_n)\in \ZZ^{\oplus n+1}$ such that there exists a sequence $\langle F_n=\mathcal F_n, \ldots, F_0=\mathcal F_0\rangle$ of vector bundles satisfying
\begin{equation}\label{order}
\mathrm{Ext}^k(E_i,F_j)=H^{k+k_i}( \mathcal E_i\otimes \mathcal F_j) =  \left\{
\begin{array}{cc}
\mathbb C & \textrm{\quad if $i=j=k$} \\
0 & \textrm{\quad otherwise}
\end{array}
\right.
\end{equation}
i.e. the collection $\langle F_n, \ldots, F_0\rangle$ labelled in the reverse order is the right dual collection of $\langle E_0, \ldots, E_n\rangle$.
Then for any coherent sheaf $A$ on $X$ there is a spectral sequence in the square $-n\leq p\leq 0$, $0\leq q\leq n$  with the $E_1$-term
\[
E_1^{p,q} = \mathrm{Ext}^{q}(E_{-p},A) \otimes F_{-p}=
H^{q+k_{-p}}(\mathcal E_{-p}\otimes A) \otimes \mathcal F_{-p}
\]
which is functorial in $A$ and converges to
\begin{equation}
E_{\infty}^{p,q}= \left\{
\begin{array}{cc}
A & \textrm{\quad if $p+q=0$} \\
0 & \textrm{\quad otherwise.}
\end{array}
\right.
\end{equation}
\end{theorem}

\section{First properties of $H$-instanton over $\PP(\cO_{\PP^2}\oplus\cO_{\PP^2}(e))$}

We fix a decomposable vector bundle $\cV=
\cO_{\PP^2}\oplus\cO_{\PP^2}(e)$ of rank $2$ on $\PP^2$ with $e\geq 0$. The associated projective space bundle
$X:=\PP\cV$ is by definition $\Proj(\Sym \cV)$, adopting the
notational conventions of \cite[Section~II.7]{Ha}. We have the classes $\xi$ and $f$ of $\cO_{\PP(\cV)}(1)$ and $\pi^*\cO_{\pp^2}(1)$ respectively.  Thus we have an isomorphism
$$
A(X)\cong\mathbb Z[\xi,f]/(f^3,\xi^2-e\xi f),
$$

Recall that  $\xi^3=e^2$, $\xi^2 f=e\xi f^2$ and $\xi f^2$ is the class of a point. We have $\omega_X \cong \cO_X(-2\xi+(e-3)f)$. 

The following  lemma is useful for computation.
\begin{lemma}\label{lem0}
Let $X_e$ be a scroll. Since  $\Sym^a\cV = \oplus_{j=0}^{a} \cO_X (je)$, we get
\begin{enumerate}[(i)]
\item $H^i(X, \cO_X(a\xi+bf)) \cong H^i(\PP^2, \oplus_{j=0}^{a} \cO_X (je) \otimes
  \cO_{\PP^2}(b))$ if $a\ge 0$;
\item $H^i(X, \cO_X(-\xi+bf)) =0$;
\item $H^i(X, \cO_X(a\xi+bf)) \cong H^{3-i}(\PP^2, \oplus_{j=0}^{-a-2} \cO_X (je)
  \otimes \cO_{\PP^2}(e-b-3))$ if $a<-1$.
\end{enumerate}
\end{lemma}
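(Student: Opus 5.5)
The plan is to compute everything through the projection $\pi\colon X\to\PP^2$. We use $\pi_*\cO_X(a\xi)=\Sym^a\cV$ for $a\ge 0$, $\pi_*\cO_X(a\xi)=0$ for $a<0$, and the vanishing of $R^1\pi_*\cO_X(a\xi)$ for $a\ge -1$. Relative duality for the $\PP^1$-bundle then handles the case $a\le -2$. Throughout, $\cO_X(a\xi+bf)=\cO_X(a\xi)\otimes\pi^*\cO_{\PP^2}(b)$. By the projection formula, $\pi_*\cO_X(a\xi+bf)=\pi_*\cO_X(a\xi)\otimes\cO_{\PP^2}(b)$, and similarly for $R^1\pi_*$. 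Since the fibres are $\PP^1$, the higher direct images $R^j\pi_*$ vanish for $j\ge2$. The Leray spectral sequence therefore reduces the computation to cohomology on $\PP^2$ of $\pi_*$ and $R^1\pi_*$.

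\emph{Case (i), $a\ge 0$.} On each fibre $\cO_X(a\xi)$ restricts to $\cO_{\PP^1}(a)$, which has no $H^1$. Hence $R^1\pi_*=0$ and $\pi_*\cO_X(a\xi)=\Sym^a\cV$. Since $\cV=\cO\oplus\cO(e)$, we get $\Sym^a\cV=\bigoplus_{j=0}^a\cO_{\PP^2}(je)$. Leray then degenerates and gives (i). The statement writes $\cO_X(je)$, but it should be read as $\cO_{\PP^2}(je)$.

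\emph{Case (ii), $a=-1$.} On each fibre the restriction is $\cO_{\PP^1}(-1)$, which has no cohomology at all. So both $\pi_*$ and $R^1\pi_*$ vanish, and all $H^i$ vanish by Leray.

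\emph{Case (iii), $a\le -2$.} I would use Serre duality on $X$ with $\omega_X\cong\cO_X(-2\xi+(e-3)f)$. This gives
\[
H^i(X,\cO_X(a\xi+bf))\cong H^{3-i}\bigl(X,\cO_X((-a-2)\xi+(e-3-b)f)\bigr)^\vee .
\]
Here $-a-2\ge 0$, so case (i) applies to the right-hand side. It yields $H^{3-i}\bigl(\PP^2,\bigoplus_{j=0}^{-a-2}\cO_{\PP^2}(je)\otimes\cO_{\PP^2}(e-b-3)\bigr)^\vee$. Since these are finite-dimensional vector spaces, we may drop the dual up to isomorphism, which is exactly the claim.

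The only point needing care is the canonical class formula. It is stated in the paper, but it can also be checked from the relative Euler sequence:
\[
\omega_{X/\PP^2}=\cO_X(-2\xi)\otimes\pi^*\det\cV=\cO_X(-2\xi+ef),
\]
so that
\[
\omega_X=\omega_{X/\PP^2}\otimes\pi^*\omega_{\PP^2}=\cO_X(-2\xi+(e-3)f).
\]
Note that this sign convention relies on using $\Proj(\Sym\cV)$, so that $\pi_*\cO_X(\xi)=\cV$. Beyond that, the argument is routine. The "main obstacle" is just keeping the twists consistent, in particular whether $\Sym^a\cV$ or its dual appears under Hartshorne's convention.
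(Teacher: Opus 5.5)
Your proof is correct and is essentially the computation the paper delegates to Hartshorne, Exercise III.8.4: compute $\pi_*$ and $R^1\pi_*$ of $\cO_X(a\xi)$ on the $\PP^1$-bundle and let the Leray spectral sequence degenerate, which gives (i) and (ii) exactly as you do. The only variation is in (iii). There you apply Serre duality on $X$ with $\omega_X\cong\cO_X(-2\xi+(e-3)f)$ and reduce to (i), whereas the exercise's route uses the relative duality $R^1\pi_*\cO_X(a\xi)\cong(\Sym^{-a-2}\cV)^\vee\otimes(\det\cV)^\vee$ followed by Serre duality on $\PP^2$; both routes give the same formula, and you are right that $\cO_X(je)$ in the statement should read $\cO_{\PP^2}(je)$.
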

\begin{proof}
  See \cite[Exercise III.8.4]{Ha}.
\end{proof}

We will also make use the following exact sequences
\begin{gather}
\label{Om}
0\longrightarrow\pi^*\Omega^1_{\pp^2}\longrightarrow\cO_X(-f)^{\oplus3}\longrightarrow\cO_X\longrightarrow0,\\
\label{OmD}
0\longrightarrow\cO_X(-3f)\longrightarrow\cO_X(-2f)^{\oplus3}\longrightarrow\pi^*\Omega^1_{\pp^2}\longrightarrow0,\\
\label{Omt}
0\longrightarrow\cO_X(-3f)\longrightarrow\cO_X(-2f)^{\oplus3}\longrightarrow\cO_X(-f)^{\oplus3}\longrightarrow\cO_X\longrightarrow0,\\
\label{Rel}
0\longrightarrow\cO_X(-2\xi +ef)\longrightarrow\cO_X(-\xi)\oplus\cO_X(-\xi +ef)\longrightarrow\cO_X\longrightarrow0.
\end{gather}
The first exact sequence is the pull--back of the Euler exact sequence on $\pp^2$ via $\pi$. The second one is the pull--back of the dual of the same sequence. The third one is the relative Euler exact sequence on $X_e$.

 Thus the Riemann--Roch theorem for a rank two bundle $\cF$ on $X_e$ becomes
\begin{equation}
  \label{RRgeneral}
  \begin{aligned}
    \chi(\cF)=2&+{\frac16}(c_1(\cF)^3-3c_1(\cF)c_2(\cF)+3c_3(\cF))-\\
    &-{\frac14}(K_{X_e}c_1(\cF)^2-2K_{X_e}c_2(\cF))+{\frac1{12}}(K_{X_e}^2c_1(\cF)+c_2(\Omega_{X_e}^1) c_1(\cF))
  \end{aligned}
\end{equation}
(see \cite[Theorem A.4.1]{Ha}).

In our case $K_{X_e}=-2\xi-(3-e)f$ and $c_2(\Omega_{X_e}^1)=6\xi f+(3-3e)f^2.$

We consider $H$-instantons bundles (see \cite{AM2}) with respect to the polarization $H=\xi+f$.
On $X_e$ Definition 1.1 of \cite{AM2} takes the following form.
\begin{definition}\label{scrollinsta}
A rank two $\mu$-semistable vector bundle $\cE$ on $X_e$ is called an $H$-instanton bundle of charge $k$ if and only if
\begin{itemize}
\item $c_1(\cE)=(e-1)f$;
\item $h^0(X,\cE)=h^1(S,\cE(-H))=0$
\item $c_2(\cE)=\alpha\xi f+\beta f^2$ with $(e+1)\alpha+\beta=k$.
\end{itemize}
\end{definition}

As a consequence of Definition \ref{scrollinsta}, since $\cE^\vee\cong\cE(-(e-1)f)$, we have by Serre duality that
\begin{equation}\label{sd}
h^i(X,\cE(aH+bf))=h^{3-i}(X,\cE((-a-2)H-(b+2)f))
\end{equation}
and in particular $h^i(X,\cE(-H))=0$ for any $i$.\\
Moreover, if $\cE$ is an $H$-instanton bundle with charge $\alpha\xi^2+\beta f^2$ on $F$, then  yields:
\begin{equation}
\label{RRB}
\begin{aligned}
\chi(\cE(a\xi+b f))&=\frac{e^2}3 a^3 + e a^2b + ab^2 + (e^2 + e) a^2+b^2+ (2e+2) ab+\\
&+\big(\frac{7e^2}{6}+\frac{3e}{2}-e\alpha-\beta +1 \big)a + (e-\alpha+2)b + \\
&+ \big(\frac{e^2}{2}+\frac{e}{2}-e\alpha - \alpha -\beta +1 \big).
\end{aligned}
\end{equation}

Notice that $\cE(H)$ is Ulrich if and only if $$\chi(\cE)=-h^1(X,\cE)= \big(\frac{e^2}{2}+\frac{e}{2}-e\alpha - \alpha -\beta +1 \big) =0.$$

\begin{remark}\label{rem:exceptional divisor}
		Notice that the linear system $|\cO_{X_e}(\xi)|$ is base point free. Then it determines the morphism $f: X_e \to X_e^{'}$ which contracts to the exceptional divisor $E$ (see \cite[p. 27]{PS99}), where $X_e^{'}$ is a log Fano variety. Then we have
		\[K_{X_e} = f^* K_{X_e^{'}} + \big(\frac{3}{e}-1 \big)E
		\]
		where $f^* K_{X_e^{'}} = - \big( \frac{3}{e} +1 \big) \xi$. Then we have $E = \xi -ef$.
	\end{remark}
	
	\begin{definition}\label{defn:Earnest}
		Let $\cE$ be an $H$-instanton bundle on a projective threefold $(X,H)$. If $\h1 \big(X,\cE(-H-D)\big)=0$ for each integral smooth effective divisor $D\subseteq X$, then we say that $\cE$ is earnest.
	\end{definition}
	
	\begin{proposition}\label{prop:glob. gen. divisors}
		Let $D = a\xi + b f$ be a divisor on $X_e$. $\cO_{X_e}(D)$ is globally generated if and
		only if $a, b \geq 0$. 
	\end{proposition}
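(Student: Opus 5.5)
We prove the two directions separately; both reduce to the projection formula $\pi_*\cO_{X_e}(a\xi)=\Sym^a\cV$ together with restriction to suitable curves.

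\emph{Sufficiency.} Assume $a,b\geq 0$. The line bundle $\cO_{X_e}(f)=\pi^*\cO_{\PP^2}(1)$ is globally generated, being the pull-back of a globally generated bundle. The line bundle $\cO_{X_e}(\xi)$ is globally generated because $\cV=\cO_{\PP^2}\oplus\cO_{\PP^2}(e)$ is globally generated for $e\geq 0$: the surjection $H^0(\PP^2,\cV)\otimes\cO_{\PP^2}\to\cV$ pulls back and composes with the tautological quotient $\pi^*\cV\to\cO_{X_e}(\xi)$. Since $H^0(X_e,\cO_{X_e}(\xi))=H^0(\PP^2,\cV)$ by Lemma \ref{lem0}(i), this gives a surjection $H^0(\cO_{X_e}(\xi))\otimes\cO_{X_e}\to\cO_{X_e}(\xi)$. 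This is also the base point freeness recalled in Remark \ref{rem:exceptional divisor}. A tensor product of globally generated line bundles is globally generated, so $\cO_{X_e}(a\xi+bf)=\cO_{X_e}(\xi)^{\otimes a}\otimes\cO_{X_e}(f)^{\otimes b}$ is globally generated.

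\emph{Necessity.} Suppose $\cO_{X_e}(D)$ is globally generated. Then its restriction to every curve $C\subseteq X_e$ is globally generated, hence has non-negative degree, so $D\cdot C\geq 0$.
\begin{itemize}
\item Take $C$ a fibre of $\pi$, with class $f^2$. Then $D\cdot f^2=a\,\xi f^2+b f^3=a$, so $a\geq 0$.
\item To get $b\geq 0$, take a line $C$ inside the section $E\cong\PP^2$ corresponding to the quotient $\cV\to\cO_{\PP^2}$, i.e. the exceptional divisor of Remark \ref{rem:exceptional divisor}, whose class is $\xi-ef$. On this section $\xi|_E=0$ and $f|_E$ is the hyperplane class, so $D\cdot C=b$. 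As a check in the Chow ring, $C=(\xi-ef)f$, and
\[
(a\xi+bf)(\xi-ef)f=a(\xi^2f-e\xi f^2)+b(\xi f^2-ef^3)=b,
\]
using $\xi^2f=e\xi f^2$ and $f^3=0$. Hence $b\geq 0$.
\end{itemize}
Note also that if $a<0$, Lemma \ref{lem0}(ii),(iii) gives $h^0(\cO_{X_e}(D))=0$ directly, so $\cO_{X_e}(D)$ cannot be globally generated.

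The main point is choosing the correct test curve for $b$. For $e>0$, a section of $\pi$ whose class is a positive multiple of $\xi$, such as the one corresponding to $\cV\to\cO_{\PP^2}(e)$, would only give $ae+b\geq 0$. This is why we must use lines in the negative section $\xi-ef$, where $\xi$ restricts trivially.
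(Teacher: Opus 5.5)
Your proof is correct and follows the paper's argument: sufficiency comes from the global generation of $\cO_{X_e}(\xi)$ and $\cO_{X_e}(f)$, and necessity from intersecting $D$ with the fibre class $f^2$ and with a line in the negative section $E=\xi-ef$. The paper writes its second test curve as $H\cdot E$, which has the same class $\xi f-ef^2$ as your $E\cdot f$, so both give $D\cdot C=b$.
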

	
	\begin{proof}
		If $a, b \geq 0$ then $\cO_{X_e}(D)$ is trivially globally generated. Because both $\cO_{X_e}(\xi)$ and $\cO_{X_e}(f)$ are globally generated. 
        
        On the other hand, if $\cO_{X_e}(D)$ is globally generated then $a = D \cdot f^2$ and $b = D \cdot H \cdot E$ are non-negative.
	\end{proof}
	
	\begin{proposition}\label{prop: smooth int. divisor}
		The linear system $|a\xi + bf|$ contains a smooth integral divisor $D$ if and only if either $a, b \geq 0$ or $a =1$ and $b=-e$.
	\end{proposition}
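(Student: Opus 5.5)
We prove the two directions separately, using Proposition \ref{prop:glob. gen. divisors} for sufficiency and intersection numbers with the negative section $E=\xi-ef$ of Remark \ref{rem:exceptional divisor} for necessity. Throughout we assume $e\geq 1$; for $e=0$ the exceptional case $a=1$, $b=-e$ is already contained in $a,b\geq 0$.

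\emph{Sufficiency.} If $a,b\geq 0$ and $(a,b)\neq(0,0)$, then $\cO_{X_e}(a\xi+bf)$ is globally generated by Proposition \ref{prop:glob. gen. divisors}. By Bertini, a general member of $|a\xi+bf|$ is smooth. It is also connected, hence integral, because $h^1(\cO_{X_e}(-a\xi-bf))=0$ by Lemma \ref{lem0}. The only exception is a pure multiple of a fiber-type class, $a=0$, $b\geq 2$, where one should check whether connectedness fails. In fact $|bf|$ is the pullback of $|\cO_{\PP^2}(b)|$, which contains smooth irreducible plane curves, so we can take the preimage of such a curve. For $a\ge1$ the map given by $|a\xi+bf|$ has image of dimension at least two, and Bertini's irreducibility theorem applies.

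For $a=1$, $b=-e$ we have $\xi-ef=E$. This is the section $\PP(\cO_{\PP^2})\subset X_e$ corresponding to the quotient $\cV\to\cO_{\PP^2}$. It is isomorphic to $\PP^2$, hence smooth and integral. Nonemptiness of the linear system follows from $h^0(\cO_X(\xi-ef))=h^0(\cO_{\PP^2}(-e)\oplus\cO_{\PP^2})=1$ by Lemma \ref{lem0}(i).

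\emph{Necessity.} Let $D\in|a\xi+bf|$ be integral and smooth. Intersecting with the fiber class $f^2$, a line in a $\PP^1$-fiber, which moves and covers $X_e$, gives $a=D\cdot f^2\geq 0$. If $a=0$, then $D=bf$ is effective only if $b\geq 0$. So assume $a\geq 1$ and $b<0$; we want to show $a=1$ and $b=-e$.

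\emph{Main step.} Suppose $D\neq E$. Then $D|_E$ is an effective divisor on $E\cong\PP^2$. We compute $\cO_E(\xi)=\cO_{\PP^2}$, since $\xi\cdot E\cdot f=\xi^2f-e\xi f^2=0$, and $\cO_E(f)=\cO_{\PP^2}(1)$. Hence $D|_E=\cO_{\PP^2}(b)$, which forces $b\geq 0$. It follows that for $b<0$ the divisor $E$ is a component of $D$. Since $D$ is integral, $D=E$, so $(a,b)=(1,-e)$.

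The delicate point is justifying the restriction computation. One should verify $\xi|_E$ via $\xi\cdot E=\xi^2-e\xi f=0$ in $A(X)$, using the relation $\xi^2=e\xi f$, so that $\xi|_E$ is trivial. One also needs that an effective divisor not containing $E$ restricts to an effective divisor on $E$. With that established, the argument is complete.
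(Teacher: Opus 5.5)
Your proof is correct and follows the paper's argument. Sufficiency comes from global generation plus Bertini, together with the explicit section $E=\xi-ef$. Necessity comes from testing $D$ against $E$: your computation $\cO_E(D)\cong\cO_{\PP^2}(b)$ is the paper's $D\cdot E\cdot H=b\ge 0$ for $D\neq E$. Your extra care (excluding $(a,b)=(0,0)$, proving connectedness via $h^1(\cO_{X_e}(-a\xi-bf))=0$, and assuming $e\ge 1$) is genuinely needed: for $e=0$ every member of $|a\xi|$ with $a\ge 2$ is a sum of $a$ fibres $\PP^2\times\{p\}$ of $\PP^2\times\PP^1$, so the statement, and the paper's bare appeal to Bertini, fails in that case.
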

	
	\begin{proof}
		If $a = 1$ and $b= -e$, then $D = E$ and trivially smooth and integral. If $a, b \geq 0$,
		then $\cO_{X_e}(a\xi + bf )$ is globally generated by Proposition \ref{prop:glob. gen. divisors}, and so $|a\xi + bf|$ contains a smooth integral divisor by the Bertini theorem.
		
		Conversely, assume that $\vert a\xi+bf\vert$ contains a smooth divisor. If the line bundle $\cO_{X_e} (a\xi+b f)$ is not globally generated, then we have $a\ge1$ and $-1 \ge b\ge -ea$ by Lemma \ref{lem0}.
		On the one hand, if $D\ne E$, then $D\cap E$ is necessarily a curve or it is empty, hence we should have $0\le DEH=b\le -1$, a contradiction. Since $D$ is smooth, it follows that $D=E$ which is trivially smooth and integral.
	\end{proof}

    \begin{remark}\label{ear}
		If $\cE$ is earnest, then $\h2 (\cE (-(e+1)f))$. In fact $\h1 (\cE(-h-E)) = 0$ for $D = E$ and $\h1 (\cE(-h-E)) = \h2 (\cE (-(e+1)f))$ by Serre duality.
	\end{remark}

Let us recall the notion of (semi)stability. For each sheaf $\cE$ on $X_e$ the {\sl slope} of $\cE$ with respect to $H$ is the rational number $\mu(\cE):=c_1(\cE)H^2/\rk(\cE)$ and the reduced Hilbert
polynomial $p_{\cE}(t)$ of a bundle $\cE$ over $X_e$ is
$ p_{\cE}(t):=\chi(\cE(th))/\rk(\cE).$

We say that a vector bundle $\cE$ is {\sl $\mu$-stable} (resp. {\sl $\mu$-semistable}) with respect to $H$ if  $\mu( \cG) < \mu(\cE)$ (resp. $\mu(\cG) \le \mu(\cE)$) for each subsheaf $ \cG$ with $0<rk(\cG)<rk(\cE)$.

On the other hand, $\cE$ is said to be Gieseker semistable  with respect to $H$ if for all $ \cG$ as above one has
$$
p_{ \cG}(t) \leq  p_{\cE}(t),
$$
and Gieseker stable again if equality cannot hold in the above inequality.

In order to check $\mu$-semistability of sheaves we will use the following criterion.
\begin{proposition}\cite[Theorem 3]{JMPS}\label{hoppe}
Let $\cE$ be a rank two vector bundle over a polycyclic variety $X$ and let $L$ be a polarization on $X$. $\cE$ is $\mu$-(semi)stable if and only if
\[
H^0(X,\cE\otimes \cO_X(B))=0
\]
for all $B \in Pic(X)$ such that $\delta_L(B) \underset{(<)}{\leq} -\mu_L(\cE)$, where $\delta_L(B)=\deg_L(\cO_X(B))$.
\end{proposition}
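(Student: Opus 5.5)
This criterion is cited from \cite[Theorem 3]{JMPS}, and the standard route is a generalized Hoppe argument: for a rank two bundle, destabilizing subsheaves can be taken to be rank one reflexive subsheaves, hence line bundles on a smooth variety, and a line subbundle $\cO_X(-B)\hookrightarrow\cE$ is the same as a nonzero section of $\cE\otimes\cO_X(B)$. I would prove the two implications separately and treat the stable and semistable cases in parallel.

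\emph{Necessity.} Suppose $\cE$ is $\mu$-semistable and $H^0(\cE(B))\neq 0$ for some $B$ with $\delta_L(B)<-\mu_L(\cE)$. A nonzero section gives an injection $\cO_X(-B)\hookrightarrow\cE$. Then
\[
\mu_L(\cO_X(-B))=-\delta_L(B)>\mu_L(\cE),
\]
which contradicts semistability. The stable case is identical with $\leq$ in place of $<$.

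\emph{Sufficiency.} Let $\cG\subset\cE$ be a rank one subsheaf; we may replace it by its saturation, which only increases $\mu$. The saturation is reflexive of rank one, hence a line bundle $\cO_X(-B)$, since $X$ is smooth. The inclusion is a nonzero section of $\cE(B)$, so the hypothesis forces $\delta_L(B)>-\mu_L(\cE)$ (respectively $\geq$). This gives $\mu(\cG)\le\mu(\cO_X(-B))=-\delta_L(B)<\mu(\cE)$ (respectively $\le$), as required.

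The only places where the hypotheses on $X$ enter are smoothness, so that reflexive rank one sheaves are invertible, and $\Pic(X)$ being the relevant indexing set. On a polycyclic (toric) variety such as $X_e$, $\Pic$ is finitely generated and discrete, so the quantifier over $B$ ranges over all line bundles. The main point of care is the saturation step: I must check that saturating does not decrease slope. This holds because the quotient of the saturation by $\cG$ is torsion, and $c_1$ of a torsion sheaf is effective, so $\mu$ can only go up. No further obstacle is expected, so I would simply quote \cite{JMPS} for the details.
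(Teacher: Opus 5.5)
Your proof is correct and follows the standard argument behind the generalized Hoppe criterion of \cite{JMPS}, which the paper only cites without proof: in rank two you replace a rank-one subsheaf by its saturation, which is a line bundle $\cO_X(-B)$ on the smooth variety $X$, and identify it with a nonzero section of $\cE\otimes\cO_X(B)$; necessity is immediate. One minor correction: ``polycyclic'' means $\Pic(X)\cong\mathbb{Z}^l$, not toric, and your argument in fact uses only that $X$ is smooth and projective with $L$ ample.
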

\begin{remark}\label{rem}
In our case $$\mu_H(\cE)=((e-1)f)H^2/2=(e-1)f(\xi+f)^2/2=(e-1)\xi^2 f+2(e-1)\xi f^2/2=$$ $$=(e^2-e+2e-2)/2=(e^2+e-2)/2$$ and $$\delta_H(a\xi+bf)=a\xi^3+2a\xi^2 f+a\xi f^2+b\xi^2 f+2b\xi f^2=ae^2+(2a+b)e+a+2b$$ so we get that $\cE$ is $\mu$-stable if and only if
$$H^0(S,\cE(a\xi+bf))=0$$ when $ae^2+(2a+b)e+a+2b\leq -(e^2+e-2)/2$. In particular the condition $H^0(X,\cE)=0$ included in the definition of $H$-instanton bundle is implied by the $\mu$-semistability only when $e=0$. Moreover if $a=-2, b=e$ we get $-2e^2-4e+e^2-2+2e=-e^2-2e-2\leq -(e^2+e-2)/2$ and $$H^0(X,\cE(-2\xi+ef))=0.$$
\end{remark}

\begin{lemma}\label{lem1}

Let $\cE$ a $H$-instanton bundle on $X_e$.
\begin{enumerate}
\item[(i)] $H^0(\cE(a\xi+bf))=0$ if $a\leq -1$, $b\leq e$ and $H^0(\cE(bf))=0$ if $b\leq 0$.
\item[(ii)] $H^3(\cE(a\xi+bf))=0$ for any $a\geq -1$, $b\geq -(e+2)$ and $H^3(\cE(-2\xi+bf))=0$ for any $b\geq -2$
\item[(iii)] $H^0(\Omega\otimes\cE(a\xi+bf))=0$ if $a\leq -1$, $b\leq e+1$ and $H^0(\cE(bf))=0$ if $b\leq 1$.
\item[(iv)] $H^3(\Omega\otimes\cE(a\xi+bf))=0$ for any $a\geq -1$, $b\geq -e$ and $H^3(\cE(-2\xi+bf))=0$ for any $b\geq 0$

\end{enumerate}
\end{lemma}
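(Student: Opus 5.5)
Part (i) follows from $h^0(\cE)=0$ plus restriction sequences that lower the twist; part (ii) is its Serre dual via \eqref{sd}; parts (iii) and (iv) come from (i) and (ii) through the pulled-back Euler sequences \eqref{Om}, \eqref{OmD}.

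\textbf{Part (i).} Start from $h^0(\cE)=0$. For $b\le 0$, a nonzero section of $\cE(bf)$ would give a nonzero section of $\cE$, by multiplying with a section of $\cO_X(-bf)$, which is effective. So $H^0(\cE(bf))=0$.

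For $a\le -1$ and $b\le e$ we want $H^0(\cE(a\xi+bf))=0$. Since $\xi$ is effective, it suffices to treat $a=-1$, and then, since $f$ is effective, only $b=e$. This case is $H^0(\cE(-\xi+ef))=H^0(\cE(-E))$, where $E=\xi-ef$ is the exceptional divisor of Remark~\ref{rem:exceptional divisor}. Because $E$ is effective, $\cE(-E)\hookrightarrow\cE$, so $H^0(\cE(-E))\subseteq H^0(\cE)=0$.

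As a cross-check, $\mu$-semistability via Proposition~\ref{hoppe} and Remark~\ref{rem} requires $\delta_H(-\xi+ef)=-e^2-2e-1+e^2+2e=-1\le -(e^2+e-2)/2$. This holds only for small $e$, so the effectivity argument through $h^0(\cE)=0$ is the one to use.

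\textbf{Part (ii).} Apply Serre duality \eqref{sd}, i.e.\ $\cE^\vee\otimes\omega_X\cong\cE(-2\xi-2f)=\cE(-2H)$ up to the $f$-twist. Explicitly, $\omega_X=\cO(-2\xi+(e-3)f)$ and $\cE^\vee=\cE(-(e-1)f)$ give
\[
h^3(\cE(a\xi+bf))=h^0(\cE((-a-2)\xi+(-b-2)f)).
\]
For $a\ge -1$ and $b\ge -(e+2)$ we get $-a-2\le -1$ and $-b-2\le e$, so this vanishes by (i). For $a=-2$ the dual twist is $(-b-2)f$, which vanishes by (i) when $-b-2\le 0$, i.e.\ $b\ge -2$.

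\textbf{Part (iii).} Here $\Omega$ should mean $\pi^*\Omega^1_{\PP^2}$. Tensor \eqref{Om} by $\cE(a\xi+bf)$. This gives $H^0(\Omega\otimes\cE(a\xi+bf))\subseteq H^0(\cE(a\xi+(b-1)f))^{\oplus3}$, which vanishes by (i) when $a\le -1$ and $b-1\le e$, and also when $a=0$ and $b-1\le 0$. This explains the shifted bounds $e+1$ and $1$; the second clause of (iii) should be read with $\Omega\otimes$.

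\textbf{Part (iv).} Tensor \eqref{OmD} and use the surjection $H^3(\cE(a\xi+(b-2)f))^{\oplus3}\to H^3(\Omega\otimes\cE(a\xi+bf))$, which is surjective because $H^4=0$. By (ii) the source vanishes for $a\ge -1$ with $b-2\ge -(e+2)$, i.e.\ $b\ge -e$, and for $a=-2$ with $b-2\ge -2$, i.e.\ $b\ge 0$.

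The only delicate point is the reduction in (i) to $H^0(\cE(-E))$ using effectivity of $E$; everything else is bookkeeping of the twists.
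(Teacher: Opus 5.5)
Your proof is correct. Parts (ii)--(iv) coincide with the paper's argument: Serre duality \eqref{sd} reduces (ii) to (i), and (iii), (iv) follow from (i), (ii) through \eqref{Om} and \eqref{OmD}. Your reading of the second clauses of (iii) and (iv) as statements about $\Omega\otimes\cE$ is the intended one. Taken literally, the clause in (iii) fails at $b=1$: for $e=0$ and $\cE=\pi^*\big(\Omega^1_{\PP^2}(1)\big)$ one has $h^0(\cE(f))=3$.

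The real difference is in (i). The paper first kills $H^0(\cE(-2\xi+ef))$ by $\mu$-semistability, applying Proposition~\ref{hoppe} as in Remark~\ref{rem}. At that twist $\delta_H=-e^2-2e-2<-\mu_H(\cE)$ holds for every $e$. This is why the paper's route works even though, as your cross-check shows, the criterion cannot be applied at $-\xi+ef$ once $e\ge 2$. The paper then reaches $-\xi+ef$ through the relative Euler sequence \eqref{Rel} together with $h^0(\cE)=0$.

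You instead use that $\xi-ef$ is effective: $h^0(\cO_X(\xi-ef))=h^0(\cO_{\PP^2}(-e)\oplus\cO_{\PP^2})\ge 1$ by Lemma~\ref{lem0}. Hence $-(a\xi+bf)$ is effective for all $a\le -1$, $b\le e$, and $\cE(a\xi+bf)\hookrightarrow\cE$. This buys three things:
\begin{itemize}
\item It is shorter.
\item It handles the descent in $a$ in one step. The paper attributes that step to \eqref{Om} and \eqref{OmD}, but those sequences only move the $f$-degree.
\item It shows semistability plays no role: the lemma holds for every rank two bundle with $c_1=(e-1)f$ and $h^0(\cE)=0$.
\end{itemize}
In fact the paper's intermediate vanishing $H^0(\cE(-2\xi+ef))=0$ is itself an instance of your argument, since $2\xi-ef=\xi+(\xi-ef)$ is effective.
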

\begin{proof}
\begin{enumerate}
\item[(i)] Since $H^0(\cE)=0$, by (\ref{OmD}), we get $H^0(\cE(bf))=0$ for any $b\leq 0$.\\
Moreover, for Remark \ref{rem}, $H^0(\cE(-2\xi+ef))=0$ so, by (\ref{Rel}) tensored by $\cE$, we get $H^0(\cE(-\xi+ef))=0$. By (\ref{OmD}) we get $H^0(\cE(-\xi+bf))=0$ for any $b\leq e$. Finally by (\ref{OmD}) and (\ref{Om}) we get $(i)$.

\item[(ii)] By Serre duality, since $H^0(\cE)=0$, we get $h^3(\cE(-2\xi-2f))=0$; hence $H^3(\cE(-2\xi+bf))=0$ for any $b\geq -2$. By Serre duality, since $H^0(\cE(-\xi+ef))=0$, we get $h^3(\cE(-\xi-(e+2)f))=0$; hence $H^3(\cE(-\xi+bf))=0$ for any $b\geq -(e+2)$ and, by (\ref{Rel}) and (\ref{Om}), we get $(ii)$.\\

\item[(iii)] By sequence (\ref{Om}) and $(i)$, we get $(iii)$.

\item[(iv)] By sequence (\ref{OmD}) and $(ii)$, we get $(iv)$.\\

\end{enumerate}
\end{proof}

\begin{lemma}\label{lem2}

Let $\cE$ a $H$-instanton bundle on $X_e$.
\begin{enumerate}
\item[(1)] $H^2(\cE(a\xi+bf))=0$ if $a,b\geq -1$.

\item[(2)] $H^2(\Omega\otimes\cE(a\xi+bf))=0$ if $a\geq -1$, $b\geq 1$.

\end{enumerate}
\end{lemma}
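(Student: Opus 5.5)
The plan is to start from the instanton vanishing and propagate it to the whole quadrant $a,b\geq -1$. We twist the exact sequences (\ref{Om}), (\ref{OmD}) and (\ref{Rel}) by $\cE$ and control the obstruction terms with the $H^3$-vanishings of Lemma \ref{lem1}. Throughout, $\Omega$ denotes $\pi^*\Omega^1_{\pp^2}$, as in Lemma \ref{lem1}. The starting point is $h^2(\cE(-\xi-f))=h^2(\cE(-H))=0$, which holds because, by (\ref{sd}), $h^i(\cE(-H))=0$ for every $i$.

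\emph{Step 1: the row $a=-1$.} Tensor (\ref{Om}) by $\cE(-\xi+(b+1)f)$ to get
$$0\to\Omega\otimes\cE(-\xi+(b+1)f)\to\cE(-\xi+bf)^{\oplus3}\to\cE(-\xi+(b+1)f)\to0.$$
The long cohomology sequence gives $H^2(\cE(-\xi+bf))^{\oplus3}\to H^2(\cE(-\xi+(b+1)f))\to H^3(\Omega\otimes\cE(-\xi+(b+1)f))$. By Lemma \ref{lem1}(iv) the last group vanishes as soon as $b+1\geq -e$, which always holds for $b\geq -1$. Induction on $b$ starting from $b=-1$ then gives $H^2(\cE(-\xi+bf))=0$ for all $b\geq -1$.

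\emph{Step 2: increasing $a$.} Tensor (\ref{Rel}) by $\cE((a+1)\xi+bf)$. This yields
$$H^2(\cE(a\xi+bf))\oplus H^2(\cE(a\xi+(b+e)f))\to H^2(\cE((a+1)\xi+bf))\to H^3(\cE((a-1)\xi+(b+e)f)).$$
We argue by induction on $a\geq -1$, with the statement valid for all $b\geq -1$ at each stage. The two left-hand groups vanish by the inductive hypothesis, since $b+e\geq b\geq -1$. For the right-hand group there are two cases:
\begin{itemize}
\item if $a=-1$, we need $H^3(\cE(-2\xi+(b+e)f))=0$, and Lemma \ref{lem1}(ii) gives this because $b+e\geq -2$;
\item if $a\geq 0$, then $a-1\geq -1$ and $b+e\geq -(e+2)$, so Lemma \ref{lem1}(ii) again applies.
\end{itemize}
This proves (1).

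\emph{Step 3: part (2).} Here I would use (\ref{OmD}) rather than (\ref{Om}), because (\ref{Om}) would require $H^1$-vanishings that we do not have. Tensoring (\ref{OmD}) by $\cE(a\xi+bf)$ gives
$$H^2(\cE(a\xi+(b-2)f))^{\oplus3}\to H^2(\Omega\otimes\cE(a\xi+bf))\to H^3(\cE(a\xi+(b-3)f)).$$
For $a\geq-1$ and $b\geq1$ the left-hand group vanishes by (1), since $b-2\geq-1$. The right-hand group vanishes by Lemma \ref{lem1}(ii), since $b-3\geq -2\geq -(e+2)$.

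The only delicate point is the bookkeeping in Step 2 at $a=-1$: the obstruction there is an $H^3$ with $\xi$-coefficient $-2$, so one must use the second clause of Lemma \ref{lem1}(ii). Everything else is a direct diagram chase.
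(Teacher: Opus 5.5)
Your proof is correct and follows the same route as the paper. You get the row $a=-1$ from (\ref{Om}) and Lemma \ref{lem1}(iv), raise the $\xi$-coefficient with (\ref{Rel}) using the $H^3$-vanishings of Lemma \ref{lem1}(ii), and handle part (2) with the pulled-back dual Euler sequence (\ref{OmD}). Your bookkeeping is tidier than the written proof: the paper invokes (\ref{Om}) for part (2), where an uncontrolled $H^1$ term would appear, although the bound $b\geq 1$ comes exactly from (\ref{OmD}) as you use it; it also writes $b\leq -1$ where $b\geq -1$ is meant.
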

\begin{proof}
\begin{enumerate}
\item[(1)] Since $H^2(\cE(-\xi-f)=0$ and $H^3(\Omega\otimes\cE(-\xi))=0$ for Lemma \ref{lem1} $(i)$, by (\ref{Om}) tensored by $\cE(-\xi)$, we get $H^2(\cE(-\xi))=0$  and by a recursive argument $H^2(\cE(-\xi+bf))=0$ for any $b\leq -1$.\\
Moreover, by (\ref{Rel}) tensored by $\cE(-f)$, we get $H^2(\cE(-f))=0$. Finally by (\ref{OmD}) by recursive argument we get $(1)$.

\item[(2)] By (\ref{Om}) and $(1)$, we get $(2)$.

\item[(ii)] By Serre duality, since $H^0(\cE)=0$, we get $h^3(\cE(-2\xi-2f))=0$; hence $H^3(\cE(-2\xi+bf))=0$ for any $b\geq -2$. By Serre duality, since $H^0(\cE(-\xi+ef))=0$, we get $h^3(\cE(-\xi-(e+2)f))=0$; hence $H^3(\cE(-\xi+bf))=0$ for any $b\geq -(e+2)$ and, by (\ref{Rel}) and (\ref{Om}), we get $(ii)$.\\

\item[(iv)] By (\ref{OmD}) and $(ii)$, we get $(iv)$.\\

\end{enumerate}
\end{proof}

 \section{Monads for $H$-instanton bundles}

Let us consider the full exceptional collection (see \cite{Orlov})
\begin{gather}\label{col1}
(\cE_5,\cE_4,\cE_3,\cE_2,\cE_2,\cE_0):=(\cO_X(-\xi-f)[-2],\cO_X(-\xi)[-2],\\
\cO_X(-\xi+f)[-2],
\cO_X(-(e+1)f),\cO_X(-ef),\cO_X(-(e-1)f),\notag
\end{gather}
\begin{gather}\label{col2}
(\cF_5,\cF_4,\cF_3,\cF_2,\cF_1,\cF_0):=(\cO_X(-\xi+(e-2)f),\pi^*\Omega^1_{\pp^2}(-\xi+ef),\\
\cO_X(-\xi+(e-1)f),
\cO_X((e-2)f),\pi^*\Omega^1_{\pp^2}(ef),\cO_X((e-1)f)).\notag
\end{gather}

We will use also the full exceptional collection (see \cite{Orlov})
\begin{gather}\label{col3}
(\cE_5,\cE_4,\cE_3,\cE_2,\cE_2,\cE_0):=(\cO_X(-\xi-f)[-2],\pi^*\Omega^1_{\pp^2}(-\xi+f)[-2],\\
\cO_X(-\xi)[-2],
\cO_X(-(e+1)f),\pi^*\Omega^1_{\pp^2}(-(e-1)f),\cO_X(-ef)).\notag
\end{gather}
\begin{gather}\label{col4}
(\cF_5,\cF_4,\cF_3,\cF_2,\cF_1,\cF_0): =(\cO_X(-\xi+(e-2)f),\cO_X(-\xi+(e-1)f),\\
\cO_X(-\xi+ef),
\cO_X((e-2)f),\cO_X((e-1)f),\cO_X(ef)),\notag
\end{gather}
and
\begin{gather}\label{col5}
(\cE_5,\cE_4,\cE_3,\cE_2,\cE_2,\cE_0):=(\cO_X(-\xi-f)[-2],\pi^*\Omega^1_{\pp^2}(-\xi+f)[-2],\\
\cO_X(-\xi)[-2],
\cO_X(-(e+2)f),\pi^*\Omega^1_{\pp^2}(-ef),\cO_X(-(e+1)f)).\notag
\end{gather}
\begin{gather}\label{col6}
(\cF_5,\cF_4,\cF_3,\cF_2,\cF_1,\cF_0): =(\cO_X(-\xi+(e-2)f),\cO_X(-\xi+(e-1)f),\\
\cO_X(-\xi+ef),
\cO_X((e-1)f),\cO_X(ef),\cO_X((e+1)f)),\notag
\end{gather}
For each sheaf $\cG$ and integer $j$ we denote by $\cG[j]$ the complex $G$ such that
$$
G_i=\left\lbrace\begin{array}{ll} 
0\quad&\text{if $i\ne j$,}\\
\cG\quad&\text{if $i=j$,}
\end{array}\right.
$$
with the trivial differentials: we will omit $[0]$ in the notation.

We shows that (\ref{col2}) is also strong. In fact  for $i>0$ 
$$Ext^i(\cO_X(-\xi+(e-2)f),\pi^*\Omega^1_{\pp^2}(-\xi+ef))=H^i(\pi^*\Omega^1_{\pp^2}(+2f))=0$$
 by (\ref{OmD}) twisted by $\cO_X(2f)$;
$$Ext^i(\cO_X(-\xi+(e-2)f),\cO_X(-\xi+(e-1)f))=H^i(\cO_X(f))=0;$$
$$Ext^i(\cO_X(-\xi+(e-2)f),\cO_X((e-2)f))=H^i(\cO_X(\xi))=0;$$
$$Ext^i(\cO_X(-\xi+(e-2)f),\pi^*\Omega^1_{\pp^2}(ef))=H^i(\pi^*\Omega^1_{\pp^2}(\xi+2f))=0$$
 by (\ref{OmD}) twisted by $\cO_X(\xi+2f)$;
$$Ext^i(\cO_X(-\xi+(e-2)f),\cO_X((e-1)f))=H^i(\cO_X(\xi+f))=0.$$
$$Ext^i(\pi^*\Omega^1_{\pp^2}(-\xi+ef),\cO_X(-\xi+(e-1)f))=H^i((\pi^*\Omega^1_{\pp^2})^\vee(-f))=0$$
 by the dual of (\ref{OmD}) twisted by $\cO_X(-f)$;
$$Ext^i(\pi^*\Omega^1_{\pp^2}(-\xi+ef),\cO_X((e-2)f))=H^i((\pi^*\Omega^1_{\pp^2})^\vee(\xi-2f))=0$$
by the dual of (\ref{OmD}) twisted by $\cO_X(\xi-2f)$;
$$Ext^i(\pi^*\Omega^1_{\pp^2}(-\xi+ef),\pi^*\Omega^1_{\pp^2}(ef))=H^i((\pi^*\Omega^1_{\pp^2})^\vee\otimes\pi^*\Omega^1_{\pp^2}(\xi))=0$$
 by (\ref{OmD}) twisted by $\pi^*\Omega^1_{\pp^2}(\xi)$;
$$Ext^i(\pi^*\Omega^1_{\pp^2}(-\xi+ef),\cO_X((e-1)f))=H^i((\pi^*\Omega^1_{\pp^2})^\vee(\xi-f))=0$$
by the dual of (\ref{OmD}) twisted by $\cO_X(\xi-f)$.

$$Ext^i(\cO_X(-\xi+(e-1)f),\cO_X((e-2)f))=H^i(\cO_X(\xi-f))=0;$$
$$Ext^i(\cO_X(-\xi+(e-1)f),\pi^*\Omega^1_{\pp^2}(ef))=H^i(\pi^*\Omega^1_{\pp^2}(\xi+f))=0$$
 by (\ref{OmD}) twisted by $\cO_X(\xi+f)$;
$$Ext^i(\cO_X(-\xi+(e-1)f),\cO_X((e-1)f))=H^i(\cO_X(\xi))=0.$$
$$Ext^i(\cO_X((e-2)f),\pi^*\Omega^1_{\pp^2}(ef))=H^i(\pi^*\Omega^1_{\pp^2}(2f))=0$$
 by (\ref{OmD}) twisted by $\cO_X(2f)$;
$$Ext^i(\cO_X((e-2)f),\cO_X((e-1)f))=H^i(\cO_X(f))=0.$$
$$Ext^i(\pi^*\Omega^1_{\pp^2}(ef),\cO_X((e-1)f))=H^i((\pi^*\Omega^1_{\pp^2})^\vee(-f))=0$$
by the dual of (\ref{OmD}) twisted by $\cO_X(-f)$.

With a very similar proof it is possible to show that sequences (\ref{col4}) and  (\ref{col6}) are also strong.\\
Now we able to construct monads associated to $H$-instanton bundles:

\begin{theorem}
\label{tSimplify}
Let $\cE$ be an instanton bundle with  $\gamma=h^2(\cE(-(e+1)f))=0$ and $c_2(\cE)=\alpha\xi^2+\beta f^2$ on $X_e$. 

Then $\cE$ is the cohomology of a monad  of the form
\scriptsize
$$
 0\rightarrow
\begin{matrix}
\pi^*\Omega^1_{\pp^2}(-\xi+ef)^{\alpha} \\
\oplus \\
\cO_X((e-2)f)^{\beta +\frac{-e^2+e}{2}}
\end{matrix}
\rightarrow
\begin{matrix}
\pi^*\Omega^1_{\pp^2}(ef)^{\alpha + \beta + \frac{-e^2+3e-2}{2}} \\
\oplus \\
\cO_X(-\xi+(e-1)f)^{2\alpha}
\end{matrix}
\rightarrow
\cO_F((e-1)f)^{2 \alpha + \beta + \frac{-e^2 +5e - 8}{2}}
\rightarrow
0.
 $$

\end{theorem}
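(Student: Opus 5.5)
Apply Theorem \ref{use} to $A=\cE$, using the full exceptional collection (\ref{col1}) and its right dual (\ref{col2}). The $E_1$-terms are
\[
E_1^{p,q}=H^{q+k_{-p}}(\cE\otimes\cE_{-p}^*)\otimes\cF_{-p},
\]
where $\cE_{-p}^*$ runs through $\cO_X((e-1)f),\cO_X(ef),\cO_X((e+1)f)$ (for $p=0,-1,-2$, with no shift) and through $\cO_X(\xi-f),\cO_X(\xi),\cO_X(\xi+f)$ (for $p=-3,-4,-5$, with shift $2$). Each entry should be rewritten, via $\cE^\vee\cong\cE(-(e-1)f)$ and the relevant exceptional objects, as a cohomology group $h^i(\cE(D))$ with $D\in\{-(e+1)f,\,-ef,\,-(e-1)f,\,-\xi-f,\,-\xi,\,-\xi+f\}$, or the corresponding $\Omega$-twists. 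The rest of the argument is bookkeeping of which of these groups vanish.

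\textbf{Vanishing.} Lemma \ref{lem1} and Lemma \ref{lem2} give the following.
\begin{itemize}
\item Row $-\xi$: since $\cE$ is an instanton, $h^i(\cE(-H))=0$ for all $i$ by (\ref{sd}). Hence the column with $\cF_5$ vanishes entirely.
\item Column $-\xi+f$ (in $\cE$ these twists appear as $\cE(-\xi)$ and $\cE(-\xi+f)$ up to duality): $H^0$ and $H^3$ vanish by Lemma \ref{lem1}(i),(ii). The same holds for the $h^3$ terms of the $f$-twists.
\item Terms $h^2(\cE(-(e+1)f))=\gamma$: these vanish by hypothesis. By Serre duality (\ref{sd}) this also kills an $h^1$ term.
\item Terms $h^0(\cE(-(e-1)f))$ and $h^2(\cE(-ef)),\ h^2(\cE(-(e-1)f))$: these are handled by Lemma \ref{lem2}(1) together with (\ref{OmD}) and (\ref{Om}).
\end{itemize}
The goal is to show that only one row survives for each column, namely $q=1$ for the $f$-twists and the shifted analogue for the $\xi$-twists. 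The $E_1$ page then reduces to a single complex
\[
0\to \cF_4^{a}\oplus\cF_2^{b}\to\cF_3^{c}\oplus\cF_1^{d}\to\cF_0^{g}\to0 .
\]
The vanishing of the $\cF_5$ column is exactly why the term $\cO_X(-\xi+(e-2)f)$ does not appear.

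\textbf{Convergence.} With all other entries zero, the spectral sequence degenerates at $E_2$. Convergence to $\cE$ in total degree $0$ forces the following: the displayed complex is a monad, its outer maps are injective and surjective, and its middle cohomology is $\cE$. Because the collection (\ref{col2}) is strong, no unexpected nonzero differentials can occur between non-adjacent columns.

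\textbf{Ranks.} Each surviving multiplicity equals, up to sign, an Euler characteristic $\chi(\cE(D))$, since the other cohomology groups vanish. These are computed with the Riemann--Roch formula (\ref{RRB}):
\begin{itemize}
\item $\chi(\cE(-\xi+f))$ and its $\Omega$-twisted counterpart give $\alpha$ and $2\alpha$;
\item $\chi(\cE(-(e+1)f))$, $\chi(\cE(-ef))$ and $\chi(\cE(-(e-1)f))$ give $\beta+\tfrac{-e^2+e}{2}$, then $\alpha+\beta+\tfrac{-e^2+3e-2}{2}$, and then $2\alpha+\beta+\tfrac{-e^2+5e-8}{2}$.
\end{itemize}
For the $\pi^*\Omega^1$-entries we compute $\chi$ of $\Omega\otimes\cE(D)$ through (\ref{Om}). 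As a consistency check, $\mathrm{rk}=2$ and $c_1=(e-1)f$ of the monad's cohomology should match those of $\cE$.

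\textbf{Main obstacle.} The hard step is the vanishing of the middle-row $h^1$ and $h^2$ terms for the $f$-twisted columns, in particular $h^1(\cE(-(e-1)f))$ and the $\Omega$-twisted $h^2$. These are not covered directly by Lemma \ref{lem2}. The plan is to deduce them from $\gamma=0$ by Serre duality and the sequences (\ref{OmD}) and (\ref{Rel}) twisted by $\cE$, in the same recursive way as in the proof of Lemma \ref{lem2}. If some of these terms do not vanish, they must instead cancel through $d_1$ maps; in that case one would need injectivity or surjectivity arguments on $E_2$, and this is where the argument could be delicate.
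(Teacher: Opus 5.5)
Your skeleton is the paper's: the Beilinson spectral sequence for (\ref{col1})/(\ref{col2}), vanishing from Lemmas \ref{lem1} and \ref{lem2}, and ranks from (\ref{RRB}). However, the one step where the hypothesis $\gamma=0$ is actually used is never carried out, and your list of what must vanish is wrong.

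In the $f$-columns you need $H^2(\cE(-ef))=H^2(\cE(-(e-1)f))=0$. Lemma \ref{lem2}(1) requires $b\ge -1$, so it does not give these in general; already $H^2(\cE(-ef))$ is out of range for $e\ge 2$. You leave this step open, with a fallback about cancellations through $d_1$. The missing argument is short. Tensoring (\ref{Om}) by $\cE(-ef)$ gives $H^2(\cE(-(e+1)f))^{\oplus 3}\to H^2(\cE(-ef))\to H^3(\pi^*\Omega^1_{\pp^2}\otimes\cE(-ef))$. The left term vanishes because $\gamma=0$, and the right term vanishes by Lemma \ref{lem1}(iv) with $a=0$, $b=-e$. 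Tensoring (\ref{Om}) by $\cE(-(e-1)f)$ then gives $H^2(\cE(-(e-1)f))=0$ the same way. You also omit $H^2(\cE(-\xi))=H^2(\cE(-\xi+f))=0$, which is Lemma \ref{lem2}(1) with $a=-1$, $b\in\{0,1\}$.

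Conversely, no $H^1$ in the table may vanish, because the $H^1$'s are exactly the ranks of the monad. In particular $h^1(\cE(-(e-1)f))=2\alpha+\beta+\frac{-e^2+5e-8}{2}$ is the rank of the last term, so your ``main obstacle'' of killing it aims at something false. Likewise, $\gamma$ is Serre dual to $h^1(\cE(-2\xi+(e-1)f))=h^1(\cE(-H-E))$ (Remark \ref{ear}), which is not an entry of the table, so it kills no $h^1$ there.

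Several bookkeeping points are also off.

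\textbf{The $E_1$-entries.} They are $H^{\bullet}(\cE\otimes\cO_X(D))$ with $D$ exactly the twists listed in (\ref{col1}), not their duals. This is what makes (\ref{col2}) the dual collection; for example, $\cO_X(-\xi-f)\otimes\cO_X(-\xi+(e-2)f)\cong\omega_X$. The isomorphism $\cE^\vee\cong\cE(-(e-1)f)$ does not turn $H^i(\cE((e-1)f))$ into a cohomology group of $\cE(-(e-1)f)$. So your ``rewriting'' step is not valid, even though your final list of $D$'s is right.

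\textbf{The ranks.} No $\pi^*\Omega^1_{\pp^2}$-twisted cohomology enters them here. The multiplicity $\alpha$ of $\pi^*\Omega^1_{\pp^2}(-\xi+ef)$ is $h^1(\cE(-\xi))$, and the multiplicity $2\alpha$ of $\cO_X(-\xi+(e-1)f)$ is $h^1(\cE(-\xi+f))$; both are read off (\ref{RRB}) directly. The computation of $\chi(\pi^*\Omega^1_{\pp^2}\otimes\cE(D))$ belongs to the second monad of Theorem \ref{mon2}.

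\textbf{Degeneration.} The spectral sequence need not degenerate at $E_2$. The $\xi$-entries sit two rows above the $f$-entries, so $d_3$ from the columns $p=-4,-3$ to $p=-1,0$ can be nonzero. For instance, $\Hom(\pi^*\Omega^1_{\pp^2}(-\xi+ef),\pi^*\Omega^1_{\pp^2}(ef))\supseteq H^0(\cO_X(\xi))\neq 0$, and strongness of (\ref{col2}) only kills higher $\mathrm{Ext}$'s. The monad is the Beilinson complex quasi-isomorphic to $\cE$, whose maps include these components. That its only cohomology is $\cE$ in degree $0$ is what gives injectivity and surjectivity of the outer maps.
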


\begin{proof}

We construct a Beilinson complex, quasi-isomorphic to $\cE$, by calculating $H^{i+k_j}(\cE\otimes \cE_j)\otimes \cF_j$ with  $i,j \in \{0, \ldots, 5\}$, using (\ref{col1}) and (\ref{col2}), to get the following table:

 \begin{center}
 \scriptsize
 \begin{tabular}{|c|c|c|c|c|c|c|c|c|c|c|}
\hline

 $\cO(-\xi+(e-2)f)$ & $\Omega(-\xi+ef)$ & $\cO(-\xi+(e-1)f)$ & $\cO((e-2)f)$ & $\Omega(ef)$ & $\cO((e-1)f)$ \\
 \hline
 \hline
$H^3$	&	$H^3$	&	$H^3$		&	$*$		&	$*$		&	$*$	\\
\hline
$H^2$	&	$H^2$	&	$H^2$	&	$*$	&	$*$		&	$*$	\\
\hline
$H^1$	& 	$H^1$	&	$H^1$	&	$H^3$	&	$H^3$	&	$H^3$	\\
\hline
$H^0$	& 	$H^0$	&	$H^0$	&	$H^2$	&	$H^2$	&	$H^2$	\\
\hline
$*$		&	$*$ 	 	&	$*$	&	$H^1$	& 	$H^1$	& 	$H^1$	\\
\hline
$*$		&	$*$		&	$*$		&	$H^0$		&	$H^0$	& 	$H^0$ \\
\hline
\hline
$\cO(-\xi-f)$ & $\cO(-\xi)$ & $\cO(-\xi+f)$ & $\cO(-(e+1)f)$ & $\cO(-ef)$ & $\cO(-(e-1)f)$ \\
\hline
\end{tabular}
\end{center}

We called $\pi^*\Omega^1_{\pp^2}$ simply $\Omega$ and $\cO_X$ simply $\cO$.\\
By Lemma \ref{lem1}, all the $H^0, H^3$ in the table are zero  (for the case $H^0(\cE(-\xi+f))=0$ when $e=0$ not included in the Lemma see \cite{AM2} Remark 2.5) .\\ Moreover, the first column is zero by the instantonic condition and 
$H^2(\cE(-\xi))=H^2(\cE(-\xi+f))=0$ by Lemma \ref{lem2}.\\ 
Finally by  (\ref{Om}) tensored by $\cE(-ef)$, since $\gamma=h^2(\cE(-(e+1)f))=0$ and $H^3(\cE\otimes\Omega(-ef))=0$ we get $H^2(\cE(-ef))=0$ and similarly $H^2(\cE(-(e-1)f))=0$. 
So the table becomes

 \begin{center}
 \scriptsize
 \begin{tabular}{|c|c|c|c|c|c|c|c|c|c|c|}
\hline

 $\cO(-\xi+(e-2)f)$ & $\Omega(-\xi+ef)$ & $\cO(-\xi+(e-1)f)$ & $\cO((e-2)f)$ & $\Omega(ef)$ & $\cO((e-1)f)$ \\
 \hline
 \hline
$0$	&	$0$	&	$0$		&	$*$		&	$*$		&	$*$	\\
\hline
$0$	&	$0$	&	$0$	&	$*$	&	$*$		&	$*$	\\
\hline
$0$	& 	$H^1$	&	$H^1$	&	$0$	&	$0$	&	$0$	\\
\hline
$0$	& 	$0$	&	$0$	&	$0$	&	$0$	&	$0$	\\
\hline
$*$		&	$*$ 	 	&	$*$	&	$H^1$	& 	$H^1$	& 	$H^1$	\\
\hline
$*$		&	$*$		&	$*$		&	$0$		&	$0$	& 	$0$ \\
\hline
\hline
$\cO(-\xi-f)$ & $\cO(-\xi)$ & $\cO(-\xi+f)$ & $\cO(-(e+1)f)$ & $\cO(-ef)$ & $\cO(-(e-1)f)$ \\
\hline
\end{tabular}
\end{center}
By the equality (\ref{RRB}), we have $h^1(\cE(-\xi)) = \alpha$,\\ $h^1(\cE(-\xi +f)) = 2\alpha $,\\ $h^1(\cE(-(e+1)f)) = \beta + \frac{-e^2+e}{2}$,\\ $h^1(\cE(-ef)) = \alpha + \beta + \frac{-e^2+3e-2}{2}$\\ and $h^1(\cE(-(e-1)f)) = 2 \alpha + \beta + \frac{-e^2 +5e - 8}{2}$,\\
and we obtain the claimed monad.
\end{proof}

\begin{remark}
    For $e=1$ we have the monad of \cite{CCGM} Theorem 1.3.
\end{remark}

\begin{remark}\label{pbk} We obtain the following bounds for $\alpha $ and $\beta $:
\begin{itemize}
    \item $\alpha \geq 0.$
    \item If $\alpha =0$, $\beta \geq -1 + \frac{e^2+e}{2}$, $\beta \geq - \frac{-e^2+3e-2}{2}$, $\beta \geq -\frac{-e^2 +5e - 8}{2}$.\\
    Moreover we get $h^i(\cE(-\xi +f)) =h^i(\cE(-\xi)) =0$ for any $i$. So by (\ref{Omt}) tensored by $\cE(-\xi+f)$ we get $h^i(\cE(-\xi -2f)) =0$ for any $i$. Now, by (\ref{Omt}) tensored by $\cE(-\xi+2f)$ we get $h^i(\cE(-\xi +2f)) =0$ for any $i$ and by (\ref{Omt}) tensored by $\cE(-\xi)$ we get $h^i(\cE(-\xi -3f)) =0$ for any $i$. Then, by a recursive argument, we may conclude that $h^i(\cE(-\xi -tf)) =0$ for any $i$ and for any integer $t$.\\
    By (\ref{Rel}) tensored by $\cE(bf)$, since by Lemma \ref{lem1} $H^3(\cE(-2\xi+bf))=0$ for any $b\geq -2$, we get $h^2(\cE(bf))=0$ for any $b\geq -(e+2)$. In particular $\cE$ is earnest.
\end{itemize}

\end{remark}
We can now characterize earnest $H$-instanton bundles:
\begin{theorem}\label{thm:earnest}
		Let $\cE$ be a $H$-instanton on $X_e$. Then, $\cE$ is earnest if and only if $\h2 (\cE (-(e+1)f))=0$.
	\end{theorem}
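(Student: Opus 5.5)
One direction is immediate from Remark \ref{ear}. If $\cE$ is earnest, take $D=E=\xi-ef$, which is smooth and integral by Proposition \ref{prop: smooth int. divisor}. Then $h^1(\cE(-H-E))=0$. By Serre duality (\ref{sd}), since $-H-E=-2\xi+(e-1)f$ and $\omega_X\otimes\cE^\vee\cong\cE(-2\xi-2f)$, this group is dual to $H^2(\cE(-(e+1)f))$. Hence $h^2(\cE(-(e+1)f))=0$.

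For the converse, assume $h^2(\cE(-(e+1)f))=0$. By Proposition \ref{prop: smooth int. divisor}, the only divisors $D$ to check are $D=E$ and $D=a\xi+bf$ with $a,b\ge0$. The case $D=E$ is exactly the hypothesis, read through the same duality. For $D=a\xi+bf$, Serre duality gives
$$h^1(\cE(-H-D))=h^2(\cE((a-1)\xi+(b-1-e+e)f-\dots)),$$
concretely $h^1(\cE(-(a+1)\xi-(b+1)f))=h^2(\cE((a-1)\xi+(b-1)f))$. So I must show $H^2(\cE(c\xi+df))=0$ for all $c\ge -1$ and $d\ge -1$. This is precisely Lemma \ref{lem2}(1), which holds for any $H$-instanton bundle and does not even need the hypothesis. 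So it seems the only real content is the divisor $E$.

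The one point I would double-check is that Lemma \ref{lem2}(1) is proved unconditionally; its proof uses only the instanton condition and Lemma \ref{lem1}. If that lemma were instead to require $\gamma=0$, I would redo the vanishing by induction. I would start from $h^2(\cE(-\xi-f))=0$ (the instantonic condition with Serre duality). Then I would push up in $f$ using (\ref{Om}) tensored by $\cE(-\xi+tf)$, together with $H^3(\Omega\otimes\cE(-\xi+tf))=0$ from Lemma \ref{lem1}(iv). Next I would push up in $\xi$ using (\ref{Rel}) tensored by $\cE(c\xi+df)$, which needs $H^3(\cE((c-1)\xi+(d+e)f))=0$ from Lemma \ref{lem1}(ii). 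The main obstacle is keeping track of these index ranges, especially for $d$ near $-1$ when $c=-1$. That case is again covered directly by the instanton condition.
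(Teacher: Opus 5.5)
Your argument is correct, but for the converse it takes a different route from the paper. The paper uses the hypothesis $\gamma=h^2(\cE(-(e+1)f))=0$ to obtain the monad of Theorem~\ref{tSimplify}. It then twists the monad by $\cL=\cO_X(-(1+a)\xi-(1+b)f)$ and bounds $h^1(\cE\otimes\cL)$ by $h^2(\cC^{-1}\otimes\cL)+h^1(\cC^{0}\otimes\cL)+h^0(\cC^{1}\otimes\cL)$, and these vanish by Lemma~\ref{lem0} and (\ref{Om}). You instead dualize to $h^2(\cE((a-1)\xi+(b-1)f))$ and quote Lemma~\ref{lem2}(1).

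Your worry about that lemma is unnecessary: it holds for every $H$-instanton. Its proof uses only three facts: $h^2(\cE(-H))=0$; $H^3(\Omega\otimes\cE(-\xi+tf))=0$ for $t\ge -e$ (Lemma~\ref{lem1}(iv)), to climb in $f$; and $H^3(\cE(-2\xi+bf))=0$ for $b\ge -2$ together with (\ref{Rel}), to climb in $\xi$. None of these needs $\gamma=0$. Note that the ``$b\le -1$'' in the paper's proof should read $b\ge -1$.

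Your route is more elementary, needs no Beilinson machinery, and proves more. The vanishing for all $D\in|a\xi+bf|$ with $a,b\ge 0$ is automatic, so earnestness is decided by the single divisor $E$. The paper, by contrast, spends the hypothesis $\gamma=0$ on divisors where it is not needed; in exchange, its proof shows how the monad controls twisted cohomology.

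Three minor fixes:
\begin{itemize}
\item Your duality computation via $\omega_X\otimes\cE^\vee\cong\cE(-2\xi-2f)$ is right, but it does not match (\ref{sd}) as printed, which carries a spurious $-2f$. The correct form is $h^i(\cE(aH+bf))=h^{3-i}(\cE(-(a+2)H-bf))$, so cite the direct computation instead of (\ref{sd}).
\item Remove the garbled first display in your converse.
\item In your fallback sketch, (\ref{Rel}) tensored by $\cE(c\xi+df)$ has kernel $\cE((c-2)\xi+(d+e)f)$, not $\cE((c-1)\xi+(d+e)f)$. So the vanishing you need is $H^3(\cE((c-2)\xi+(d+e)f))=0$. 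It still holds for $c\ge 0$, $d\ge -1$, since it is dual to $H^0(\cE(-c\xi-(d+e+2)f))=0$.
\end{itemize}
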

	
	\begin{proof}
		If $\cE$ is earnest, then $\h2 (\cE (-(e+1)f))$ by Remark \ref{ear}.
		
		Conversely, let us assume that $\h2 (\cE (-(e+1)f))=\h1 (\cE(-h-E)) =0$. Then, it remains to prove that $\h1 (\cE (-(1+a)\xi - (1+b)f)) = 0$ for $a,b \geq 0$ by Proposition \ref{prop: smooth int. divisor}. Since $\cE$ be a $H$-instanton on $X_e$ and $\h2 (\cE (-(e+1)f))=0$, $\cE$ is the cohomology of the monad in Theorem \ref{tSimplify} . If we tensor the monad with the line bundle $\cL = \cO_{X_e} \big(-(1+a)\xi - (1+b)f \big)$, we have
		
		\begin{eqnarray}\label{ineq:bound}
			\h1 \big(F,\cE\otimes\mathcal L\big) \le \sum_{j=-1}^1 \h{1-j}\big(F,\cC^{j}\otimes\mathcal L\big)
		\end{eqnarray}
		
		where $\cC^{-1} = \pi^*\Omega^1_{\pp^2}(-\xi+ef)^{\alpha} \oplus
		\cO_X((e-2)f)^{\beta +\frac{-e^2+e}{2}}$,\\
		 $\cC^{0} = 	\pi^*\Omega^1_{\pp^2}(ef)^{\alpha + \beta + \frac{-e^2+3e-2}{2}} \oplus \cO_X(-\xi+(e-1)f)^{2\alpha}$ and\\
		  $\cC^{1} = \cO_F((e-1)f)^{2 \alpha + \beta + \frac{-e^2 +5e - 8}{2}}$. Then, the right hand side of the inequality (\ref{ineq:bound}) vanishes thanks to Lemma \ref{lem0} and the short exact sequence (\ref{Om}).
	\end{proof}

\begin{remark}
If $\gamma\not=0$, then $\cE$ is not earnest and it is the cohomology of a monad  of the form
    \begin{equation*}
\label{Monad}
0\longrightarrow \cA\longrightarrow \cB\longrightarrow\cC\longrightarrow0
\end{equation*}
where

\begin{gather*}
\cA:=\pi^*\Omega^1_{\pp^2}(-\xi+ef)^{\alpha}\oplus\cO_X((e-2)f)^{\beta + \frac{-e^2+e}{2}-\gamma},\\
\cB:=\pi^*\Omega^1_{\pp^2}(ef)^{\alpha + \beta + \frac{-e^2+3e-2}{2}-\delta}\oplus\cO_X(-\xi+(e-1)f)^{2\alpha}\oplus \cO_X((e-2)f)^{\gamma},\\
\end{gather*}
and $\cC$ arises from the following exact sequence:
\begin{gather*}
0\to\cC\to\cO_F((e-1)f)^{2 \alpha + \beta + \frac{-e^2 +5e - 8}{2}-\eta}\oplus\pi^*\Omega^1_{\pp^2}(ef)^{\delta}\to\cO_F((e-1)f)^{\eta}\to 0,
\end{gather*}
with $\eta:=h^2(\cE(-ef))$ and $\delta:=h^2(\cE(-(e-1)f))$. 
\end{remark}

We can construct a second monad:
\begin{theorem}\label{mon2}
Let $\cE$ be an instanton bundle with  $\gamma=h^2(\cE(-(e+1)f))=0$ and $c_2(\cE)=\alpha\xi^2+\beta f^2$ on $X_e$. 

Then $\cE$ is the cohomology of a monad  of the form
\small
$$
 0\rightarrow
\begin{matrix}
\cO_X(-\xi+(e-1)f)^{\alpha} \\
\oplus \\
\cO_X((e-2)f)^{\beta + \frac{e^2 +e}{2}}
\end{matrix}
\rightarrow
\begin{matrix}
\cO_X((e-1)f)^{2\beta+\alpha-e^2+2e+1} \\
\oplus \\
\cO_X(-\xi+ef)^{\alpha}
\end{matrix}
\rightarrow
\cO_X(ef)^{\alpha+\beta + \frac{-e^2 +3e-2}{2}}
\rightarrow
0.
 $$


\end{theorem}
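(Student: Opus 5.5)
We will run the same Beilinson argument as in the proof of Theorem \ref{tSimplify}, now with the full exceptional collection (\ref{col3}) and its dual (\ref{col4}). By Theorem \ref{use}, $\cE$ is quasi-isomorphic to a complex whose term in position $p$ is
$$\bigoplus_q H^{q+k_{-p}}(\cE\otimes\cE_{-p})\otimes\cF_{-p}.$$
The groups to compute are:
\begin{itemize}
\item $H^i(\cE(-\xi-f))$, $H^i(\cE\otimes\pi^*\Omega^1_{\pp^2}(-\xi+f))$ and $H^i(\cE(-\xi))$, coupled with $\cO_X(-\xi+(e-2)f)$, $\cO_X(-\xi+(e-1)f)$ and $\cO_X(-\xi+ef)$;
\item $H^i(\cE(-(e+1)f))$, $H^i(\cE\otimes\pi^*\Omega^1_{\pp^2}(-(e-1)f))$ and $H^i(\cE(-ef))$, coupled with $\cO_X((e-2)f)$, $\cO_X((e-1)f)$ and $\cO_X(ef)$.
\end{itemize}

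\textbf{Vanishing in the table.}
\begin{enumerate}
\item The column of $\cE(-\xi-f)$ vanishes completely, by the instanton condition and (\ref{sd}).
\item All $H^0$ and $H^3$ terms vanish by Lemma \ref{lem1}. This covers both the line bundle twists and the $\Omega$-twists (parts (iii) and (iv)), since $-\xi+f$ and $-(e-1)f$ lie in the required ranges.
\item In the first three columns, $H^2(\cE(-\xi))=0$ by Lemma \ref{lem2}. For $H^2(\cE\otimes\pi^*\Omega^1_{\pp^2}(-\xi+f))$, tensor (\ref{Om}) by $\cE(-\xi+f)$. This gives
$$H^1(\cE(-\xi))^{3}\to H^1(\cE(-\xi+f))\to H^2(\Omega\otimes\cE(-\xi+f))\to H^2(\cE(-\xi))^3=0.$$
So the vanishing of this $H^2$ must come from surjectivity of the left map. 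Instead, I will use (\ref{OmD}) twisted by $\cE(-\xi+f)$, which gives $H^2(\cE(-\xi-f))^3\to H^2(\Omega\otimes\cE(-\xi+f))\to H^3(\cE(-\xi-2f))$. The first term is zero. The last is zero because $h^3(\cE(-\xi-2f))=h^0(\cE(-\xi))=0$ by (\ref{sd}) and Lemma \ref{lem1}.
\item In the last three columns, $\gamma=0$ kills $H^2(\cE(-(e+1)f))$. As in the proof of Theorem \ref{tSimplify}, this gives $H^2(\cE(-ef))=0$ and $H^2(\cE(-(e-1)f))=0$. Then (\ref{Om}) tensored by $\cE(-(e-1)f)$, together with the vanishing of $H^2(\cE(-ef))$ and $H^3$ of the $\Omega$-twist, kills $H^2(\Omega\otimes\cE(-(e-1)f))$.
\end{enumerate}
After this, the only surviving groups are the $H^1$ terms, in the same positions as in the table of Theorem \ref{tSimplify}.

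\textbf{Reading off the monad.} The $H^1$'s of the first block sit in degree $-1$, coupled with $\cO_X(-\xi+(e-1)f)$ and $\cO_X(-\xi+ef)$. Those of the second block are spread over degrees $-1$, $0$ and $1$, coupled with $\cO_X((e-2)f)$, $\cO_X((e-1)f)$ and $\cO_X(ef)$. This yields a monad
$$0\to\cO_X(-\xi+(e-1)f)^{a}\oplus\cO_X((e-2)f)^{b}\to\cO_X((e-1)f)^{c}\oplus\cO_X(-\xi+ef)^{d}\to\cO_X(ef)^{g}\to0.$$
Here $d=h^1(\cE(-\xi))=\alpha$ and $g=h^1(\cE(-ef))=\alpha+\beta+\frac{-e^2+3e-2}{2}$, both already computed in Theorem \ref{tSimplify}. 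The remaining ranks come from Riemann--Roch (\ref{RRB}):
\begin{itemize}
\item $b=h^1(\cE(-(e+1)f))=-\chi$. The stated exponent $\beta+\frac{e^2+e}{2}$ should be obtained by recomputing this carefully, since it differs from the value in Theorem \ref{tSimplify}.
\item $a=h^1(\Omega\otimes\cE(-\xi+f))$ and $c=h^1(\Omega\otimes\cE(-(e-1)f))$. I will compute these from $\chi$ via additivity on (\ref{Om}): for example, $\chi(\Omega\otimes\cE(-\xi+f))=3\chi(\cE(-\xi))-\chi(\cE(-\xi+f))=-3\alpha+2\alpha=-\alpha$, giving $a=\alpha$. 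The same method gives $c$.
\end{itemize}

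\textbf{Main obstacle.} The delicate steps are the vanishing of the $H^2$ groups of the $\Omega$-twists, and getting the Riemann--Roch bookkeeping to match the stated exponents exactly. The Beilinson and strongness arguments are formal once the table is established.
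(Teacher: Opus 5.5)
Your route is the paper's: Beilinson on (\ref{col3})/(\ref{col4}), the same vanishing table, and ranks from (\ref{RRB}). Your use of (\ref{OmD}) to get $H^2(\Omega\otimes\cE(-\xi+f))=0$ is correct. In fact the printed proof of Lemma \ref{lem2}(2) via (\ref{Om}) has exactly the defect you noticed, and (\ref{OmD}) is what makes it work. Two steps, however, do not go through as written.

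\textbf{The vanishing of $H^2(\Omega\otimes\cE(-(e-1)f))$.} Here you fall into the trap you identified in step 3. Tensoring (\ref{Om}) by $\cE(-(e-1)f)$ gives
\[
H^1(\cE(-ef))^{\oplus 3}\to H^1(\cE(-(e-1)f))\to H^2(\Omega\otimes\cE(-(e-1)f))\to H^2(\cE(-ef))^{\oplus 3}=0 .
\]
So this group is the cokernel of a map into $H^1(\cE(-(e-1)f))$, which has dimension $2\alpha+\beta+\frac{-e^2+5e-8}{2}$ and is in general nonzero. The vanishing of $H^3(\Omega\otimes\cE(-(e-1)f))$ sits further along the sequence and plays no role. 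The fix is the same as in step 3. Twisting (\ref{OmD}) gives $0\to\cE(-(e+2)f)\to\cE(-(e+1)f)^{\oplus3}\to\Omega\otimes\cE(-(e-1)f)\to0$. Then $H^2(\cE(-(e+1)f))=0$ because $\gamma=0$, and $H^3(\cE(-(e+2)f))=0$ by Lemma \ref{lem1}(ii); by Serre duality it equals $h^0(\cE(-2\xi+ef))$, which vanishes by Remark \ref{rem}. The paper only says ``similarly'' at this point; this is the argument that actually works.

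\textbf{The rank $b$.} You leave it uncomputed and expect a careful computation to return the printed $\beta+\frac{e^2+e}{2}$. It will not. Evaluating (\ref{RRB}) at the twist $-(e+1)f$ gives $\chi(\cE(-(e+1)f))=\frac{e^2-e}{2}-\beta$, so $b=\beta+\frac{e-e^2}{2}$. This is the same value as in Theorem \ref{tSimplify}, and it is the value the paper's own proof lists. The printed exponent is a misprint: with it, the rank of the middle term minus the ranks of the outer terms is $2-e^2$, not $2$. What you can and should prove is the monad with exponent $\beta+\frac{e-e^2}{2}$.

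For $c$, your additivity method gives $c=-3\chi(\cE(-ef))+\chi(\cE(-(e-1)f))=\alpha+2\beta-e^2+2e+1$, as stated, once the corrected $H^2$ vanishing above is in place.

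Finally, a bookkeeping slip. The Beilinson complex has in degree $k$ the sum of the $E_1^{p,q}$ with $p+q=k$, not a sum over $q$ at fixed $p$. In particular $H^1(\cE(-\xi))\otimes\cO_X(-\xi+ef)$ lands in degree $0$, not $-1$, although your displayed monad already puts it in the right place.
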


\begin{proof}

We construct a Beilinson complex, quasi-isomorphic to $\cE$, by calculating $H^{i+k_j}(\cE\otimes \cE_j)\otimes \cF_j$ with  $i,j \in \{0, \ldots, 5\}$, using (\ref{col3}) and (\ref{col4}).
We called $\pi^*\Omega^1_{\pp^2}$ simply $\Omega$ and $\cO_X$ simply $\cO$.\\
By Lemma \ref{lem1} , (\ref{OmD}), all the $H^0, H^3$ in the table are zero.\\ Moreover the first column is zero by the instantonic condition.

By  Lemma \ref{lem2} get $H^2(\cE(-\xi))=H^2(\cE\otimes\Omega(-\xi+f))=0$.\\
Finally by  (\ref{Om}) tensored by $\cE(-ef)$, since $\gamma=h^2(\cE(-(e+1)f))=0$ and $H^3(\cE\otimes\Omega(-ef))=0$ we get $H^2(\cE(-ef))=0$. Similarly we obtain $H^2(\cE\otimes\Omega(-(e-1)f))=0$.
So the table becomes

  \begin{center}
 \scriptsize
 \begin{tabular}{|c|c|c|c|c|c|c|c|c|c|c|}
\hline

$\cO_X(-\xi+(e-2)f)$ & $\cO_X(-\xi+(e-1)f)$ & $\cO_X(-\xi+ef)$
& $\cO_X((e-2)f)$ & $\cO_X((e-1)f)$ & $\cO_X(ef)$ \\
 \hline
 \hline
$0$	&	$0$	&	$0$		&	$*$		&	$*$		&	$*$	\\
\hline
$0$	&	$0$	&	$0$	&	$*$	&	$*$		&	$*$	\\
\hline
$0$	& 	$H^1$	&	$H^1$	&	$0$	&	$0$	&	$0$	\\
\hline
$0$	& 	$0$	&	$0$	&	$0$	&	$0$	&	$0$	\\
\hline
$*$		&	$*$ 	 	&	$*$	&	$H^1$	& 	$H^1$	& 	$H^1$	\\
\hline
$*$		&	$*$		&	$*$		&	$0$		&	$0$	& 	$0$ \\
\hline
\hline
$\cO_X(-\xi-f)$ & $\Omega (-\xi+f)$ & $\cO_X(-\xi)$ &
$\cO_X(-(e+1)f)$ & $\Omega (-(e-1)f)$ & $\cO_X(-ef)$ \\
\hline
\end{tabular}
\end{center}
By the equality (\ref{RRB}), we have $h^1(\cE(-\xi)) = \alpha$,\\  $h^1(\cE(-(e+1)f)) = \beta + \frac{-e^2+e}{2}$\\ and $h^1(\cE(-(e-1)f)) = \alpha+\beta + \frac{-e^2 +3e-2}{2}$.\\
By  (\ref{Om}), we get $h^1(\cE\otimes\Omega(-\xi+f))=3h^1(\cE(-\xi))-h^1(\cE(-\xi+f))=3\alpha-2\alpha=\alpha$.\\ By  (\ref{OmD}), we get $h^1(\cE\otimes\Omega(-(e-1)f))=3h^1(\cE(-ef))-h^1(\cE(-(e-1)f))=3 (\alpha+\beta + \frac{-e^2 +3e-2}{2})-(2 \alpha + \beta + \frac{-e^2 +5e - 8}{2})=2\beta+\alpha-e^2+2e+1$,\\
and we obtain the claimed monad.
\end{proof}

We now give a slightly modified and simplified version in the case $\alpha=0$:
\begin{proposition}\label{mon3}
Let $\cE$ be an $H$-instanton bundle with  $\alpha=0$  on $X_e$. 

Then $\cE$ is the cohomology of a monad  of the form

 $$
 0\rightarrow\cO_X((e-1)f)^{\beta - \frac{e^2 +e}{2}-1}
\rightarrow\cO_X(ef)^{2\beta-e^2+1}\rightarrow\cO_X((e+1)f)^{\beta + \frac{-e^2 +e}{2}}\rightarrow 0.
 $$

\end{proposition}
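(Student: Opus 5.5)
Since $\alpha=0$, Remark \ref{pbk} gives $h^i(\cE(-\xi+tf))=0$ for every $i$ and every $t\in\ZZ$, and it also gives $h^2(\cE(bf))=0$ for $b\ge -(e+2)$; in particular $\gamma=0$ and $\cE$ is earnest. I would apply the Beilinson spectral sequence of Theorem \ref{use} to the pair (\ref{col5}), (\ref{col6}). This choice is natural: its last three terms are $\cO_X((e-1)f),\cO_X(ef),\cO_X((e+1)f)$, which are exactly the bundles appearing in the claimed monad. The first three columns of the $E_1$ table are
\[
H^{*}(\cE(-\xi-f)),\quad H^{*}(\cE\otimes\pi^*\Omega^1_{\pp^2}(-\xi+f)),\quad H^{*}(\cE(-\xi)).
\]
The first and third vanish identically by the remark. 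The middle one also vanishes, using (\ref{Om}) tensored by $\cE(-\xi+f)$, since all cohomology of $\cE(-\xi)$ and $\cE(-\xi+f)$ is zero.

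So only the columns attached to $\cO_X(-(e+2)f)$, $\pi^*\Omega^1_{\pp^2}(-ef)$ and $\cO_X(-(e+1)f)$ survive. I would kill the unwanted groups as follows.
\begin{itemize}
\item $H^0$: $H^0(\cE(bf))=0$ for $b\le 0$ by Lemma \ref{lem1}. Together with (\ref{Om}), this gives $H^0(\cE\otimes\Omega(-ef))=0$.
\item $H^3$: this follows from Lemma \ref{lem1}(ii), since $-(e+2)\ge-(e+2)$, and from Lemma \ref{lem1}(iv) for the $\Omega$ term.
\item $H^2$: this vanishes because $h^2(\cE(bf))=0$ for $b\ge-(e+2)$. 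For the $\Omega$ column, use (\ref{Om}) tensored by $\cE(-ef)$ together with the $H^3$ vanishing.
\end{itemize}
Hence only the $H^1$ row remains, in the three columns. The spectral sequence then degenerates into a monad
\[
0\to \cO_X((e-1)f)^{a}\to\cO_X(ef)^{b}\to\cO_X((e+1)f)^{c}\to0,
\]
with
\[
a=h^1(\cE(-(e+2)f)),\qquad b=h^1(\cE\otimes\Omega(-ef)),\qquad c=h^1(\cE(-(e+1)f)).
\]
I still need to check that the ordering and degree placement in Theorem \ref{use} put the $\cO_X((e-1)f)$ term on the left; the indexing of (\ref{col6}) suggests it does.

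The ranks come from (\ref{RRB}) with $\alpha=0$ and $a=0$, namely $\chi(\cE(tf))=t^2+(e+2)t+\frac{e^2+e}{2}-\beta+1$. Since $H^0,H^2,H^3$ vanish, $h^1=-\chi$:
\begin{itemize}
\item $t=-(e+1)$ gives $c=\beta+\frac{-e^2+e}{2}$, as in Theorem \ref{tSimplify};
\item $t=-(e+2)$ gives $a=\beta-\frac{e^2+e}{2}-1$.
\end{itemize}
For $b$ I would use (\ref{OmD}) tensored by $\cE(-ef)$, i.e. $0\to\cE(-(e+3)f)\to\cE(-(e+2)f)^3\to\cE\otimes\Omega(-ef)\to0$. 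This needs $h^1(\cE(-(e+3)f))$, which is given by (\ref{RRB}) provided its $H^0,H^2$ vanish. $H^0$ is fine. For $H^2$ I would use Serre duality (\ref{sd}) plus the vanishing $h^i(\cE(-\xi+tf))=0$ and (\ref{Rel}) to extend $h^2(\cE(bf))=0$ to $b=-(e+3)$. As a consistency check, $b=a+c$ must hold from rank/$c_1$ bookkeeping; indeed $a+c=2\beta-e^2-1$. The stated $2\beta-e^2+1$ differs from this by $2$, so the main obstacle is pinning down the exact indexing of the middle term in this degenerate case. If the arithmetic disagrees, I would recompute $b$ directly by Riemann–Roch on $\cE\otimes\Omega(-ef)$ and adjust.
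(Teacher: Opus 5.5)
Your strategy is the paper's: Beilinson on (\ref{col5})/(\ref{col6}), only the $H^1$ row survives, and the ranks come from (\ref{RRB}). Your outer ranks $h^1(\cE(-(e+2)f))$ and $h^1(\cE(-(e+1)f))$ are right. The gap is the middle rank, which you leave open because of a wrong consistency check. The cohomology of the monad $0\to A\to B\to C\to 0$ is the rank-two bundle $\cE$, so $\rk B=\rk A+\rk C+2$. Hence $b=a+c+2=2\beta-e^2+1$, exactly the stated value; the $c_1$ count $eb-(e-1)a-(e+1)c=e-1$ confirms it, and there is no indexing problem.

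Your proposed route to $b$ also passes through a false vanishing. When $\alpha=0$, (\ref{Rel}) and Serre duality give $h^2(\cE(tf))=h^0(\cE(-(t+e+2)f))$, so $h^2(\cE(-(e+3)f))=h^0(\cE(f))$, and this is nonzero in general. For instance, for $\beta=\frac{e^2+e}{2}+1$ one has $\cE=\pi^*\cF$ with $\cF\subset\cO_{\pp^2}(e)^{e+3}$, so $h^0(\cE(f))\ge e+3$; for $e=0$, $\cE=\pi^*\Omega^1_{\pp^2}(1)$ gives $h^2(\cE(-3f))=3$. You do not need this vanishing. Once $H^0,H^2,H^3$ of $\cE\otimes\Omega(-ef)$ vanish, additivity of $\chi$ along (\ref{OmD}) gives $b=\chi(\cE(-(e+3)f))-3\chi(\cE(-(e+2)f))=\bigl(\frac{e^2+3e+8}{2}-\beta\bigr)-3\bigl(\frac{e^2+e}{2}+1-\beta\bigr)=2\beta-e^2+1$. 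The paper does the same with (\ref{Om}), getting $3h^1(\cE(-(e+1)f))-h^1(\cE(-ef))$.

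Your argument for $H^2(\cE\otimes\Omega(-ef))=0$ also fails as stated. Here $\Omega=\pi^*\Omega^1_{\pp^2}$. Tensoring (\ref{Om}) by $\cE(-ef)$ only gives $H^1(\cE(-ef))\to H^2(\cE\otimes\Omega(-ef))\to H^2(\cE(-(e+1)f))^{\oplus 3}$. So you would need surjectivity of $H^1(\cE(-(e+1)f))^{\oplus 3}\to H^1(\cE(-ef))$, which you do not have, and the $H^3$-vanishing plays no role there. Use (\ref{OmD}) tensored by $\cE(-ef)$ instead, which gives $H^2(\cE(-(e+2)f))^{\oplus 3}\to H^2(\cE\otimes\Omega(-ef))\to H^3(\cE(-(e+3)f))$. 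The left term vanishes by Remark \ref{pbk}. For the right term, Serre duality gives $h^3(\cE(-(e+3)f))=h^0(\cE(-2\xi+(e+1)f))$. This is zero because (\ref{Rel}) tensored by $\cE(f)$ embeds $H^0(\cE(-2\xi+(e+1)f))$ into $H^0(\cE(-\xi+f))\oplus H^0(\cE(-\xi+(e+1)f))$, which vanishes since $h^i(\cE(-\xi+tf))=0$ for all $t$ when $\alpha=0$.
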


\begin{proof}

We construct a Beilinson complex, quasi-isomorphic to $\cE$, by calculating $H^{i+k_j}(\cE\otimes \cE_j)\otimes \cF_j$ with  $i,j \in \{0, \ldots, 5\}$using (\ref{col5}) and (\ref{col6}).

By Lemma \ref{lem1} and Lemma \ref{lem2} , as in the above Theorem, all the $H^0, H^3$ in the table are zero and $H^2(\cE(-\xi))=H^2(\cE\otimes\Omega(-\xi+f))=0$.\\
When $\alpha =0$ we get $h^2(\cE(-(e+2)f))=0$, by Remark \ref{pbk} and $H^3(\cE\otimes\Omega(-ef))=0$, we get $H^2(\cE(-ef))=0$. Similarly $H^2(\cE\otimes\Omega(-ef))=0$. 
So the table becomes:

  \begin{center}
 \scriptsize
 \begin{tabular}{|c|c|c|c|c|c|c|c|c|c|c|}
\hline

$\cO_X(-\xi+(e-2)f)$ & $\cO_X(-\xi+(e-1)f)$ & $\cO_X(-\xi+ef)$
& $\cO_X((e-1)f)$ & $\cO_X(ef)$ & $\cO_X((e+1)f)$ \\
 \hline
 \hline
$0$	&	$0$	&	$0$		&	$*$		&	$*$		&	$*$	\\
\hline
$0$	&	$0$	&	$0$	&	$*$	&	$*$		&	$*$	\\
\hline
$0$	& 	$H^1$	&	$H^1$	&	$0$	&	$0$	&	$0$	\\
\hline
$0$	& 	$0$	&	$0$	&	$0$	&	$0$	&	$0$	\\
\hline
$*$		&	$*$ 	 	&	$*$	&	$H^1$	& 	$H^1$	& 	$H^1$	\\
\hline
$*$		&	$*$		&	$*$		&	$0$		&	$0$	& 	$0$ \\
\hline
\hline
$\cO_X(-\xi-f)$ & $\Omega (-\xi+f)$ & $\cO_X(-\xi)$ &
$\cO_X(-(e+2)f)$ & $\Omega (-ef)$ & $\cO_X(-(e+1)f)$ \\
\hline
\end{tabular}
\end{center}
By the equality (\ref{RRB}), we have $h^1(\cE(-\xi)) = \alpha=0$,\\  $h^1(\cE(-(e+1)f)) = \beta + \frac{-e^2+e}{2}$,\\ $h^1(\cE(-ef)) =  \beta + \frac{-e^2+3e-2}{2}$\\ and $h^1(\cE(-(e+2)f)) = \beta - \frac{e^2 +e}{2}-1$.\\
By  (\ref{Om}), we get $h^1(\cE\otimes\Omega(-\xi+f))=\alpha=0$.\\ By  (\ref{OmD}), we get $h^1(\cE\otimes\Omega(-ef))=3h^1(\cE(-(e+1)f))-h^1(\cE(-ef))=3 (\beta + \frac{-e^2+e}{2})-( \beta + \frac{-e^2+3e-2}{2})=2\beta-e^2+1$,
and we obtain the claimed monad.
\end{proof}

\begin{remark} 

Thanks the above proposition we may conclude that
   $\cE=\pi^*\cF$ where $\cF$ is the homology of the following monad over $\pp^2$:
  \begin{equation}\label{brun}
 0\rightarrow\cO_{\pp^2}(e-1)^{\beta - \frac{e^2 +e}{2}-1 }
\rightarrow\cO_{\pp^2}(e)^{2\beta-e^2+1}\rightarrow\cO_{\pp^2}(e+1)^{\beta + \frac{-e^2 +e}{2}}\rightarrow 0.
   \end{equation}
  In particular when $\beta$ is minimal, namely $ \beta =  \frac{e^2 +e}{2}+1$, we get

$2\beta-e^2+1=2(\frac{e^2 +e}{2}+1)-e^2+1=e+3$ and
$\beta + \frac{-e^2 +e}{2}=\frac{e^2 +e}{2}+1+\frac{-e^2+e}{2}=e+1$ so we obtain

 $$
 0\rightarrow
\cF
\rightarrow\cO_{\pp^2}(e)^{e+3}\rightarrow\cO_{\pp^2}(e+1)^{e+1}\rightarrow 0.
 $$
We have that $\cE=\pi^*\cF(H)$ is Ulrich on $X_e$. Since $\cF$ exists by \cite{CG}, also $\cE$ for any $e\geq 0$. \end{remark}

\begin{remark}
We show that the bundles over $\pp^2$ arising from monad (\ref{brun}) actually exist. Let us consider the case when $e$ is even or odd:
\begin{itemize}
    \item If $e=2t+1$, $c_1(\cF(-t))=0$ and $c_2(\cF(-t))=c_1(\cF)+c_1(\cF)(-t)+(-t)^2=\beta -t(e-1)+t^2=\beta-\frac{(e-1)^2}{4}$. So $\cG=(\cF(-t))$ belongs to the moduli space $\cM(0,\beta -t(e-1)+t^2)$ studied in \cite{Bar} with the extra condition $H^0(\cG(t))=0$. In \cite{Br} it is proven that the generic stable rank two bundle on $\pp^2$ has at most one non-zero cohomology group, so, when $\chi(\cG(t))\leq 0$ we must have $H^0(\cG(t))=0$. Let us compute $$\chi(\cG(t))=(t+1)(t+2)-c_2(\cG)=(t+1)(t+2)-(\beta -t(e-1)+t^2)\leq 0,$$ hence $$\beta\geq (\frac{e-1}{2}+1)(\frac{e-1}{2}+2) +\frac{e-1}{2}(e-1)-(\frac{e-1}{2})^2=$$ $$=3\frac{e-1}{2}+2+\frac{e-1}{2}(e-1)=e-1+2+\frac{e^2-e}{2}=1+\frac{e^2+e}{2}.$$ So when $\beta\geq 1+\frac{e^2+e}{2}$ there exist $H$-instanton bundles $\cE$ on $X_e$ with $c_2=\beta f^2$.

     \item If $e=2t$, $c_1(\cF(-t))=-1$ and $c_2(\cF(-t))=c_1(\cF)+c_1(\cF)(-t)+(-t)^2=\beta -t(e-1)+t^2=\beta+\frac{2e-e^2}{4}$. So $\cG=(\cF(-t))$ belongs to the moduli space $M(-1,\beta -t(e-1)+t^2)$ studied in \cite{Hu} with the extra condition $H^0(\cG(t))=0$. In \cite{Br} it is proven that the generic stable rank two bundle on $\pp^2$ has at most one non-zero cohomology group, so, when $\chi(\cG(t))\leq 0$ we must have $H^0(\cG(t))=0$. Let us compute $$\chi(\cG(t))=(t+1)^2-c_2(\cG)=(t+1)^2-(\beta -t(e-1)+t^2)\leq 0,$$ hence $$\beta\geq (\frac{e}{2}+1)^2 +\frac{e}{2}(e-1)-(\frac{e}{2})^2=$$ $$=e+1+\frac{e}{2}(e-1)=1+\frac{e^2+e}{2}.$$ So when $\beta\geq 1+\frac{e^2+e}{2}$ there exist $H$-instanton bundles $\cE$ on $X_e$ with $c_2=\beta f^2$.

\end{itemize}

 Recall that the moduli space of $\mu$--stable vector bundles $\cF$ on $\pp^2$ of rank $2$ with $c_1(\cF)=0$ or $c_1(\cF)=-1$ and $c_2(\cF)=k>1$ is integral, rational and smooth of dimension $4k-3$ and $4k-4$ (see \cite{Bar} and \cite{Hu} respectively). It then follows  that the locus
$$
\cI_{X_e}^{earnest}(\beta f^2)=\cI_{X_e}(\beta f^2)\subseteq\cM_{X_e}(2;(e-1)f,\beta f^2)
$$
of earnest instanton bundles is integral, rational and smooth as well of dimension $4\beta+2e-e^2-4$. For the case $e=1$ see \cite{CCGM}.
\end{remark}

\section{Existence of $H$-instanton bundles}
Let \( M \) be a curve in the class \(\xi f\) within \( A^2(X_e) \). Since \( M \) is a complete intersection, its structure sheaf has the following resolution:
\begin{equation}\label{ses: resolution of M}
	0 \rightarrow \mathcal{O}_{X_e} (-\xi - f) \rightarrow \mathcal{O}_{X_e}(-\xi) \oplus \mathcal{O}_{X_e}(-f) \rightarrow \mathcal{O}_{X_e} \rightarrow \mathcal{O}_M \rightarrow 0.
\end{equation}
In particular, we have \(\mathcal{N}_{M|X_e} \simeq \mathcal{O}_M (1) \oplus  \mathcal{O}_M (e)\), so the determinant is 
\begin{equation}\label{rem:curve M}
    \det(\mathcal{N}_{M|X_e}) = \mathcal{O}_M (e+1).
\end{equation}

Moreover, the intersection number satisfies \(\xi f (\xi + f) = e + 1\), and we compute
\[
\chi(\mathcal{O}_M) = \chi(\mathcal{O}_{X_e}) - \chi(\mathcal{O}_{X_e} (-f)) - \chi(\mathcal{O}_{X_e} (-\xi)) + \chi(\mathcal{O}_{X_e} (-\xi - f)) = 1,
\]
since \(\chi(\mathcal{O}_{X_e} (-f)) = \chi(\mathcal{O}_{X_e} (-\xi)) = \chi(\mathcal{O}_{X_e} (-\xi - f)) = 0\) by Lemma \ref{lem0}. Therefore, \( M \) is a rational curve of degree \( e + 1 \) in \( X_e \).

From now on, we denote by \(\Lambda_M\) the component of the Hilbert scheme \({\rm Hilb}^{(e+1)t+1}(X_e)\) containing these complete intersection curves. Since
\[
h^0(\mathcal{N}_{M|X_e}) = e + 3 \quad \text{and} \quad h^1(\mathcal{N}_{M|X_e}) = 0,
\]
we conclude that \(\Lambda_M\) is smooth and has dimension \( e + 3 \). Note that generic elements of \(\Lambda_M\) do not intersect each other. If they did, consider two generic elements \(\xi_0 f_0\) and \(\xi_1 f_1\) of \(\Lambda_M\) intersecting at a point \( p \). Then the divisor \( f_1 \) intersects \(\xi_0 f_0\) at \( p \), and thus \(\xi_1\) passes through \( p \). This would imply that any element \(\xi_i f_1\) in \(\Lambda_M\) intersects \(\xi_0 f_0\) at \( p \), meaning \(\xi_i\) passes through \( p \) for any \( i \). However, this leads to a contradiction, because the line bundle \(\mathcal{O}_{X_e}(\xi)\) is globally generated.

	\begin{theorem} \label{thm:existence_serre}
		There exists a stable instanton bundle $\cE$ on $X_e$ with $c_2 = \alpha \xi f$ for $\alpha > e$. 
	\end{theorem}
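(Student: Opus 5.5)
We construct $\cE$ by the Hartshorne--Serre correspondence applied to a disjoint union of curves from $\Lambda_M$. Let $Y=M_1\cup\dots\cup M_\alpha$ be $\alpha$ general, pairwise disjoint curves of $\Lambda_M$; disjointness is available by the discussion preceding the statement. We want a rank two bundle $\cE$ with $c_1(\cE)=(e-1)f$ and a section of a suitable twist vanishing exactly along $Y$. A natural choice is $\cE(\xi+f)$: then $c_1(\cE(\xi+f))=2\xi+(e+1)f$ and $c_2(\cE(\xi+f))=c_2(\cE)+c_1(\cE)(\xi+f)+(\xi+f)^2$. This has to match the class of $Y$, namely $\alpha\xi f$ plus correction terms, so the bookkeeping has to be set up carefully. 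Alternatively, one can write $\cE=\cG(-\xi-f)$ with $\cG$ sitting in
\[
0\to\cO_{X_e}\to\cG\to\cI_{Y}(2\xi+(e+1)f)\to 0 .
\]
In that case $c_2(\cG)=[Y]=\alpha\xi f$, and $c_2(\cE)$ is obtained by the standard twisting formula. If a nonzero $f^2$ or $\xi f$ correction appears, I would instead choose the twist $D$ in $\cE(D)$, with $2D+(e-1)f=c_1$, so that the Chern classes come out as $\alpha\xi f$ exactly. I would fix this numerology first.

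For the Serre construction we need $\det\cN_{Y|X_e}\cong(\omega_{X_e}\otimes\cO(c_1))|_Y$, together with $h^2(\cO_{X_e}(-c_1))=0$. By (\ref{rem:curve M}), $\det\cN_{M}=\cO_M(e+1)$. On the other side, $(-2\xi+(e-3)f+c_1)\cdot M$ has to equal $e+1$, and we check this intersection number using $\xi\cdot\xi f=e$ and $f\cdot\xi f=1$. The vanishing of $h^2$ follows from Lemma \ref{lem0}. These two checks give the bundle $\cG$ and hence $\cE$.

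Next we verify the instanton conditions using the defining sequence twisted appropriately.
\begin{itemize}
\item $h^0(\cE)=0$ and $h^1(\cE(-H))=0$ reduce, via the sequence, to vanishing of cohomology of line bundles, computed with Lemma \ref{lem0}, and of $\cI_Y(D')$ for suitable $D'$. Those in turn come from $0\to\cI_Y\to\cO_{X_e}\to\oplus\cO_{M_i}\to0$, using $\cO_{M_i}(D')\cong\cO_{\PP^1}(D'\cdot M)$.
\item The instantonic condition needs $h^0(\cO_Y(D'))\to$ surjectivity, or vanishing when $D'\cdot M=-1$. This is where the twist by $-H$ helps, since $H\cdot M=e+1$.
\item $\mu$-stability is checked with Proposition \ref{hoppe} and Remark \ref{rem}. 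We show $h^0(\cE(a\xi+bf))=0$ whenever $\delta_H\le-\mu$, by reducing to $h^0(\cO(a'\xi+b'f))$ and $h^0(\cI_Y(a''\xi+b''f))$. The latter vanishes because no divisor of small degree contains $\alpha$ general disjoint curves $M_i$. This is where the hypothesis $\alpha>e$ should enter: a divisor in $|a\xi+bf|$ with small $a,b$ has $h^0$ bounded by roughly $e+1$, and containing $\alpha$ general curves imposes enough conditions to force it to vanish.
\end{itemize}

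The main obstacle is this stability step. We must enumerate the finitely many relevant classes $(a,b)$ near the threshold, bound $h^0(\cO(D))$ by Lemma \ref{lem0}, and show that $\alpha>e$ general curves of $\Lambda_M$ impose independent conditions. For the last point I would first try a dimension count: each curve imposes at least one condition, since $\cO(\xi)$ is globally generated. If that is not enough, I would use induction on $\alpha$, with an elementary modification argument.
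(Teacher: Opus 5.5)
Your overall strategy (Hartshorne--Serre on pairwise disjoint curves of $\Lambda_M$, then checking $h^0(\cE)$, $h^1(\cE(-H))$ and Proposition \ref{hoppe} through the Serre sequence and $0\to\cI_Z\to\cO_{X_e}\to\cO_Z\to0$) is the paper's. However, the construction itself is not set up correctly, and everything downstream depends on it.

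\textbf{The Serre data.} The Serre condition is misstated. It should read $\det\mathcal{N}_{Y|X_e}\cong\cO_{X_e}(c_1)|_Y$ (equivalently $\omega_Y\cong(\omega_{X_e}\otimes\cO_{X_e}(c_1))|_Y$). Since $\det\mathcal{N}_{M|X_e}\cong\cO_M(e+1)$, this means $c_1\cdot\xi f=e+1$, not $(K_{X_e}+c_1)\cdot\xi f=e+1$. Your candidate $c_1(\cE(H))=2\xi+(e+1)f$ gives $c_1\cdot\xi f=3e+1$, so no such $\cG$ exists for $e\ge1$. Even numerically, $\alpha$ curves with twist $H$ give $c_2(\cE)=(\alpha-2e-1)\xi f-ef^2$, not $\alpha\xi f$.

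Solve for the twist $D=a\xi+bf$. The determinant condition reads $ae+b=1$. The absence of an $f^2$-term in $c_2(\cE(D))=c_2(\cE)+(e-1)f\cdot D+D^2$ reads $b(b+e-1)=0$. For $e\ge1$ these force $D=\xi+(1-e)f$, and then $c_2(\cE(D))=(\alpha+1)\xi f$.

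So you need:
\begin{itemize}
\item $\alpha+1$ curves, not $\alpha$;
\item a bundle $\cF$ with $c_1(\cF)=2\xi+(1-e)f$, where $h^2(\cO_{X_e}(-2\xi+(e-1)f))=0$ by Lemma \ref{lem0};
\item $\cE=\cF(-\xi+(e-1)f)$, which sits in $0\to\cO_{X_e}(-\xi+(e-1)f)\to\cE\to\cI_Z(\xi)\to0$.
\end{itemize}
With this sequence the instanton condition becomes $h^1(\cE(-H))=h^1(\cI_Z(-f))=h^0(\cO_Z(-f))=0$. This is the ``$D'\cdot M=-1$'' you were anticipating, but it does not arise from the twist you wrote down.

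\textbf{The vanishing $h^0(\cI_Z(\xi))=0$.} This is needed already for $h^0(\cE)=0$, and it is the real source of the hypothesis $\alpha>e$. It does not follow from your dimension count: $h^0(\cO_{X_e}(\xi))=1+\binom{e+2}{2}$, so ``each curve imposes at least one condition'' would require far more than $e+1$ curves.

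What works is geometric. A member of $|\xi|$ is either the image of a section $\sigma$ of $\pi$, or $E+\pi^*C$ with $\deg C=e$.
\begin{itemize}
\item Suppose $\sigma(\PP^2)$ contained $M_i$ and $M_j$. Since each $M_i$ is a section over the line $\pi(M_i)$, we get $M_i=\sigma(\pi(M_i))$ and $M_j=\sigma(\pi(M_j))$. These meet over the point where the two lines meet, contradicting disjointness.
\item Since general $M_i\not\subseteq E$, $E+\pi^*C\supseteq Z$ forces $C$ to contain the $\alpha+1$ general lines $\pi(M_i)$. This is impossible because $\alpha+1>e$.
\end{itemize}
The same ``plane curve containing $\alpha+1$ general lines'' argument gives $h^0(\cI_Z(bf))=0$ in the case $a=-1$ of Proposition \ref{hoppe}.

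For $a\ge0$ the paper does not enumerate classes. It inducts on $a$ using (\ref{Rel}), bounding $h^0(\cE((k+1)\xi+bf))$ by $h^1(\cI_Z(k\xi+(b+e)f))\le h^0(\cO_Z(k\xi+(b+e)f))$. This vanishes because the divisor has negative degree on each $M_i$ in the relevant range.
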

	
	\begin{proof}
		Let  $M_1,\dots, M_{\alpha+1}\in \Lambda_M$ be general curves where $\alpha > e$. Notice that we can assume such curves pairwise disjoint, due to the definition of $\Lambda_M$. We define 
		\begin{equation}
			\label{XInstanton}
			Z:=\bigcup_{i=1}^{\alpha+1}M_i  \subset X_e.
		\end{equation}
		Consider the line bundle $\mathcal{L} \simeq \cO_{X_e}(2\xi +(1-e)f)$. Thanks to  equality \eqref{rem:curve M}, we have
		\begin{gather*}
			\det(\mathcal{N}_{Z\vert X_e})\otimes\cO_{M_i}\cong\cO_{M_i}(e+1) \cong \mathcal{L} \otimes\cO_{M_i}.
		\end{gather*}
		This isomorphism at each disjoint component ensures that $\det(\mathcal{N}_{Z\vert X_e}) \cong \mathcal{L} \otimes\cO_Z$.
		Hartshorne-Serre correspondence, \cite[Theorem 1.1]{Ar}, yields the existence of a vector bundle $\cF$ on $X_e$ with a section $s$ vanishing exactly along $Z$ and with $c_1(\cF)=2\xi +(1-e)f$, $c_2(\cF)= (\alpha+1) \xi f$, because the curves $M_i$ have been chosen pairwise disjoint and $h^2\big(X_e, \cO_{X_e}(-2 \xi + f)\big)=0$ by Lemma \ref{lem0}. Thus $\cE:=\cF(-\xi +(e-1)f)$ is a vector bundle fitting into an exact sequence of the form
		\begin{equation}\label{ses:constructed_bundle_by_Serre}
			0\longrightarrow\cO_{X_e}(-\xi +(e-1)f)\longrightarrow \cE \longrightarrow \mathcal{I}_{Z \vert X_c}(\xi) \longrightarrow 0.
		\end{equation}
		Notice that 
		\begin{eqnarray*}
			c_1 (\cE) &=& c_1(\cF) + 2(-\xi+(e-1)f) = (e-1)f\\
			c_2 (\cE) &=& c_2(\cF) + c_1(\cF) \cdot (-\xi+(e-1)f) + \big(-\xi+(e-1)f \big)^2 f^2 = \alpha \xi f.
		\end{eqnarray*}
		
		First, let us prove the vanishing of $\H0 (\cE)$. The cohomology of the exact sequence (\ref{ses:constructed_bundle_by_Serre}) yields $\h0( \cE) \leq \h0(\cO_{X_e} (-\xi + (e-1) f)) + \h0 (\mathcal{I}_{Z \vert X_e} (\xi))$. If $\alpha > e$ then $Z$ will not lie in an element of the class $|\xi|$; that is, $\h0 (\mathcal{I}_{Z \vert X_e} (\xi)) = 0$. Also, $\h0(\cO_{X_c} (-\xi + f))  =0$ by Lemma \ref{lem0}. So, $\h0(\cE) = 0$.
		
		Similarly, we have $\h1(\cE(-H)) \leq \h1(\cO_{X_c} (-2\xi  + (e-2f))) + \h1 (\cI_{Z \vert X_c} (-f))$. Again by Lemma \ref{lem0}, we have $\h1(\cO_{X_c}(-2\xi + (e-2)f))  = 0$. The cohomology of the following short exact sequence 
		\begin{equation}\label{ses:ideal_sheaf}
			0 \rightarrow \cI_{Z \vert X_e}  \rightarrow \cO_{X_c} \rightarrow \cO_{Z} \rightarrow 0
		\end{equation}
		after tensoring by $\cO_{X_e} (-f)$ gives us the equality $\h1 (\cI_{Z \vert X_e} (-f)) = \h0 (\cO_{Z} (-f))$ by the help of Lemma \ref{lem0}. Since $\mathrm{deg} (\xi f \cdot (-f)) = -1$, $\h0 (\cO_{Z} (-f)) = 0$. So, $\h1(\cE(-H)) = 0$.\\
		Now, let us prove that $\cE$ is $\mu$-stable. By Proposition \ref{hoppe}, it is enough to show that $\h0(\cE (a\xi + bf)) = 0$ for all $a,b\in\mathbb{Z}$ satisfying
		\begin{equation} \label{ineq:hoppe}
			(e+1)^2 a+(e+2)b \leq -\frac{(e+2)(e-1)}{2}.
		\end{equation}
		If we tensor sequences in display \eqref{ses:constructed_bundle_by_Serre} by $\cO_{X_c}(a\xi + bf)$ then the induced cohomology sequence gives us
		\begin{equation}\label{ineq:existence_theorem_1}
			\h0(\cE(a\xi + bf)) \leq \h0 (\cO_{X_c} ((a-1)\xi + (b+e-1)f)) + \h0 ( \cI_{Z}((a+1)\xi + bf)).
		\end{equation}
		Similarly, we tensor sequences in display \eqref{ses:ideal_sheaf} by $\cO_{X_c}((a+1)\xi + bf)$ then the induced cohomology sequence gives us 
		\begin{equation}\label{ineq:existence_theorem_2}
			\h0 ( \cI_{Z}((a+1)\xi + bf)) \leq \h0 (\cO_{X_c} ((a+1)\xi + bf)).
		\end{equation}
		
		Note that $\h0 (\cO_X((a-1)\xi + (b+e-1)f)) = 0$ for $a \leq 0$ and  $\h0 (\cO_{X_c} ((a+1)\xi + bf)) =0$ for $a \leq -2$ by Lemma \ref{lem0}. So, combining this with inequalities \eqref{ineq:existence_theorem_1} and \eqref{ineq:existence_theorem_2}, $\h0(\cE(a\xi + bf)) = 0$ for $a \leq -2.$
		
		If $a= -1$, then $\h0(\cE( -\xi + bf)) \leq  \h0 ( \cI_{Z}((bf))$ by the inequality \eqref{ineq:existence_theorem_1}, and for $b \leq \frac{e+1}{2}$ by the inequality \eqref{ineq:hoppe}. Since $\alpha \geq e$, $Z$ will not lie in an element of the class $|bf|$ for $b \leq \frac{e+1}{2}$; that is, $\h0 (\mathcal{I}_{Z \vert X_e} (bf)) = 0$. So, $\h0(\cE( -\xi + bf)) = 0$ for $b \leq \frac{e+1}{2}$.
		
		If $a \geq 0$, then we will use induction on $a$ to show that $\h0(\cE(a\xi + bf))$ for all $a,b\in\mathbb{Z}$ satisfying inequality \eqref{ineq:hoppe}.
		
		If $a =0$, then $b \leq 0$ by inequality \eqref{ineq:hoppe}. But we have already showed that $\h0 (\cE) =0$. Then, $\h0(\cE( bf))$ for any $b \leq 0$ as $f$ is an effective divisor.
		
		Assume that $\h0(\cE(k\xi + bf)) = 0$ for some $k \geq 1$ and $ b \leq \frac{1-e}{2} -\frac{k(e+1)^2}{e+2}$.
		
		Now, we will show that $\h0(\cE((k+1)\xi + bf)) = 0$ for $b\leq \frac{1-e}{2} -\frac{k(e+1)^2}{e+2} -e-\frac{1}{e+2}$. If we tensor sequence \eqref{Rel} by $\cE((k+1)\xi + bf)$
		then the induced cohomology sequence gives us
		\begin{equation*}
			\h0 ( \cE((k+1)\xi + bf)) \leq \h0 (\cE(k\xi + bf)) + \h0 ( \cE(k\xi + (b+e)f)) + \h1 (\cE((k-1)\xi + (b+e)f)).
		\end{equation*}
		Notice that $\h0 (\cE(k\xi + bf)) = \h0 ( \cE(k\xi + (b+e)f)) =0$ by the inductive step. So, we have 
		\begin{equation}
			\h0 ( \cE((k+1)\xi + bf)) \leq  \h1 (\cE((k-1)\xi + (b+e)f)).
		\end{equation}
		If we tensor sequence \eqref{ses:constructed_bundle_by_Serre} by $\cO_{X_e}((k-1)\xi + (b+e)f)$ then the induced cohomology sequence gives us
		\begin{equation*}
			\h1 (\cE((k-1)\xi + (b+e)f)) \leq \h1 (\cO_{X_e}((k-2)\xi + (b+2e-1)f)) + \h1 (\cI_{Z}(k\xi +(b+e)f)).
		\end{equation*}
		Notice that $\h1 (\cO_{X_e}((k-2)\xi + (b+2e-1)f)) = 0$ by Lemma \ref{lem0}. So, we have 
		\begin{equation*}
			\h1 (\cE((k-1)\xi + (b+e)f)) \leq \h1 (\cI_{Z}(k\xi +(b+e)f)).
		\end{equation*}
		If we tensor sequence \eqref{ses:ideal_sheaf} by $\cO_{X_e}(k\xi + (b+e)f)$ then the induced cohomology sequence gives us
		\begin{equation*}
			\h1 (\cI_{Z}(k\xi +(b+e)f)) \leq \h1 (\cO_{X_e}(k\xi + (b+e)f)) + \h0 (\cO_{Z}(k\xi + (b+e)f)).
		\end{equation*}
		Similarly, by $\h1 (\cO_{X_e}(k\xi + (b+e)f)) =0$ Lemma \ref{lem0}. Also, we have 
		\begin{eqnarray*}
			\xi f \cdot (k \xi +(b+e)f) = ke + b +e &\leq& \frac{1-e}{2} -\frac{k(e+1)^2}{e+2} -e-\frac{1}{e+2} + ke +e \\
			&\leq& \frac{1-e}{2} -\frac{k}{e+2} - \frac{1}{e+2} <0.
		\end{eqnarray*}
		So, $\h0 (\cO_{Z}(k\xi + (b+e)f)) = 0$. Then, $\h1 (\cE((k-1)\xi + (b+e)f)) = 0$. As a result, $\h0 ( \cE((k+1)\xi + bf) =0$, which completes the induction. Hence, $\cE$ is $\mu$-stable.
	\end{proof}
	
	\begin{theorem}\label{thm:ext_computation}
		Let $\cE$ be one of the $H$-instanton bundle on $(X_e , H)$ constructed in Theorem \ref{thm:existence_serre}. Then $\dim \operatorname{Ext}^0(\cE, \cE) =1$, $\dim \operatorname{Ext}^3 (\cE, \cE) =0$, $\dim \operatorname{Ext}^2 (\cE, \cE) = \begin{cases}
			0& \text{if } e \leq 3 \\
			\binom{e-2}{2}& \text{if } e\geq 4
		\end{cases}$ and $\dim \operatorname{Ext}^1 (\cE, \cE) - \dim \operatorname{Ext}^2 (\cE, \cE) = (6+2e) \alpha +4\beta -(e-1)^2 -3$.
	\end{theorem}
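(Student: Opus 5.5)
I will compute $\operatorname{Ext}^i(\cE,\cE)=H^i(\cE\otimes\cE^\vee)$ from the Serre sequence \eqref{ses:constructed_bundle_by_Serre}, using $\cE^\vee\cong\cE(-(e-1)f)$. The plan is to obtain $\operatorname{Ext}^0$ and $\operatorname{Ext}^3$ from stability and Serre duality, $\operatorname{Ext}^2$ by a direct cohomological computation, and $\operatorname{Ext}^1$ from the Euler characteristic.

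\textbf{Step 1: $\operatorname{Ext}^0$ and $\operatorname{Ext}^3$.} Since $\cE$ is $\mu$-stable by Theorem \ref{thm:existence_serre}, it is simple, so $\dim\operatorname{Ext}^0(\cE,\cE)=1$. By Serre duality,
$$\operatorname{Ext}^3(\cE,\cE)\cong \operatorname{Hom}(\cE,\cE\otimes\omega_{X_e})^\vee .$$
This Hom vanishes because $\cE$ and $\cE\otimes\omega_{X_e}$ are stable and $\mu_H(\omega_{X_e})<0$ (note $-\omega_{X_e}\cdot H^2>0$, as one checks from $\omega_{X_e}=\cO(-2\xi+(e-3)f)$).

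\textbf{Step 2: $\operatorname{Ext}^2$.} I will tensor \eqref{ses:constructed_bundle_by_Serre} by $\cE^\vee=\cE(-(e-1)f)$, which gives
$$0\to \cE(-\xi)\to \cE\otimes\cE^\vee\to \cE\otimes\cI_Z(\xi-(e-1)f)\to 0 .$$

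First, I claim $h^2(\cE(-\xi))=0$ and $h^3(\cE(-\xi))=0$. The second follows from Lemma \ref{lem1}. The first follows from Lemma \ref{lem2}, using \eqref{Om} as in that lemma.

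Next I split $\cE\otimes\cI_Z(\xi-(e-1)f)$ using the sequence $0\to \cE\otimes\cI_Z\to\cE\to\cE|_Z\to 0$, twisted by $\cO(\xi-(e-1)f)$. The $H^2$ computation then reduces to two pieces:
\begin{itemize}
\item $H^2(\cE(\xi-(e-1)f))$;
\item $H^1(\cE|_Z(\xi-(e-1)f))$.
\end{itemize}

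For the first piece, I twist \eqref{ses:constructed_bundle_by_Serre} again to get
$$0\to\cO_{X_e}\to\cE(\xi-(e-1)f)\to\cI_Z(2\xi-(e-1)f)\to0 .$$
So $H^2(\cE(\xi-(e-1)f))$ is controlled by $H^2(\cO_{X_e})=0$ and by $H^2(\cI_Z(2\xi-(e-1)f))$. The latter is $H^1(\cO_Z(2\xi-(e-1)f))$ modulo $H^2(\cO_{X_e}(2\xi-(e-1)f))$.

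For the second piece, on each $M_i\cong\PP^1$ we have $\cE|_{M_i}\cong \cN_{M_i}(-\xi+(e-1)f)$. Restricting line bundles to $M_i$ via $\xi\cdot\xi f=e$ and $f\cdot\xi f=1$, the twist $\cE|_{M_i}(\xi-(e-1)f)$ becomes $\cO_{\PP^1}(1)\oplus\cO_{\PP^1}(e)$, which has no $H^1$.

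The genuine contribution therefore comes from $H^2(\cO_{X_e}(2\xi-(e-1)f))$. By Lemma \ref{lem0}(i) this equals
$$\textstyle\bigoplus_{j=0}^{2}H^2\big(\PP^2,\cO_{\PP^2}(je-e+1)\big).$$
Only $j=0$ can contribute, giving $h^2(\cO_{\PP^2}(1-e))=h^0(\cO_{\PP^2}(e-4))=\binom{e-2}{2}$. This is nonzero exactly when $e\ge4$, matching the claimed formula.

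\textbf{Main obstacle.} The delicate part is tracking the connecting maps: one must show that this $\binom{e-2}{2}$-dimensional space injects into $\operatorname{Ext}^2$, and that no other $H^1$ or $H^2$ terms cancel against it. I would verify this by checking that $H^1(\cO_Z(2\xi-(e-1)f))=0$ (the degree on each $M_i$ is $2e-e+1=e+1\ge0$) and that the relevant $H^3$ terms vanish. With these vanishings the sequences become isomorphisms on $H^2$.

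\textbf{Step 3: the difference $\operatorname{Ext}^1-\operatorname{Ext}^2$.} Given Steps 1 and 2,
$$\dim\operatorname{Ext}^1-\dim\operatorname{Ext}^2=1-\chi(\cE\otimes\cE^\vee).$$
I will compute $\chi(\cE\otimes\cE^\vee)$ by Riemann–Roch on $X_e$, with $c_1(\cE\otimes\cE^\vee)=0$ and $c_2(\cE\otimes\cE^\vee)=4c_2(\cE)-c_1(\cE)^2$. The standard formula gives
$$\chi=4-\tfrac12 K_{X_e}\cdot\big(4c_2-c_1^2\big)+(\text{Todd-term corrections}).$$
Substituting $K_{X_e}=-2\xi-(3-e)f$, $c_1=(e-1)f$ and $c_2=\alpha\xi f+\beta f^2$ should yield the stated value $(6+2e)\alpha+4\beta-(e-1)^2-3$.
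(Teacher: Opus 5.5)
Your proposal is correct and follows essentially the paper's route. You tensor the Serre sequence by $\cE^\vee$ and pass through $\cE\otimes\cI_Z(\xi-(e-1)f)$, $\cE(\xi-(e-1)f)$ and $\cI_Z(2\xi-(e-1)f)$ down to $H^2(\cO_{X_e}(2\xi-(e-1)f))$, then finish with Riemann--Roch. The one deviation is that you get $\operatorname{Ext}^3(\cE,\cE)=0$ from Serre duality and $\mu_H(\omega_{X_e})<0$ rather than from the same chain, which is valid.

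One slip to fix: the sign in your Riemann--Roch display is wrong. Since $c_1(\cE\otimes\cE^\vee)=c_3(\cE\otimes\cE^\vee)=0$, one has exactly $\chi(\cE\otimes\cE^\vee)=4+\tfrac12 K_{X_e}\cdot(4c_2(\cE)-c_1(\cE)^2)$, with no further Todd terms. This corrected formula, not yours as written, gives $1-\chi=(2e+6)\alpha+4\beta-(e-1)^2-3$.
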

	
	\begin{proof}
		Since we have showed in Theorem \ref{thm:existence_serre} that $\cE$ is $\mu$-stable, $\cE$ is simple by \cite[Corollary 1.2.8]{HL10}. So, $\dim \operatorname{Ext}^0 (\cE, \cE) =1$.
		
		Now, recall that $\operatorname{Ext}^i (\cE, \cE) \cong \H{i} (\cE^{\vee} \otimes \cE)$. Then, let us tensor the sequence \eqref{ses:constructed_bundle_by_Serre} by $\cE^{\vee} = \cE((1-e)f)$ and consider the cohomology sequence of it. We have $\h{i} (\cE^{\vee} , \cE) = \h{i} (\cE \otimes \cI_{Z} (\xi +(1-e)f))$ for $i=2,3$. Because, $\h{i} (\cE (-\xi)) \leq \h{i} (\cO_{X_e}(-2 \xi + (e-1)f)) + \h{i}(\cI_{Z})$ and both $\h{i} (\cO_{X_e}(-2 \xi + (e-1)f))$ and $\h{i}(\cI_{Z})$ vanish for $i = 2,3$.
		
		Then, consider the cohomology of the sequence \eqref{ses:ideal_sheaf} after tensoring by $\cE (\xi + (1-e)f)$. Since $\cE = \cF(-\xi + (e-1)f)$ and $\mathcal{N}_{Z\vert X_e} \simeq \cF$, $\cE(\xi +(1-e)f)\vert_{Z} \simeq \cF \vert_{Z} = \oplus_{1}^{\alpha +1} (\cO_{\xi f} (1) + \cO_{\xi f} (1) (e))$. Since the latter has vanishing cohomology other than the level $0$, we have $\h{i} (\cE \otimes \cI_{Z} (\xi +(1-e)f)) = \h{i} (\cE (\xi + (1-e)f))$ for $i=2,3$.
		
		Now, consider the cohomology of the sequence \eqref{ses:constructed_bundle_by_Serre} after tensoring by $\cO_{X_e}(\xi +(1-e)f)$. Since $\h{i} (\cO_{X_e}) =0$ for $i >0$, we have $\h{i} (\cE (\xi + (1-e)f)) =  \cI_{Z} (2 \xi +(1-e)f)$ for $i = 2,3$.
		
		Then, consider the cohomology of the sequence \eqref{ses:ideal_sheaf} after tensoring by $\cO_{X_e} (2 \xi +(1-e)f)$. Since $\xi f \cdot (2 \xi +(1-e)f) = e+1 >0, \h{i} (\cO_{Z}(2 \xi +(e-1)f))=0$ for $i >0$. So, $\h{i} \cI_{Z} (2 \xi +(1-e)f) = \h{i} (\cO_{X_e} (2\xi +(1-e))f)$ for $i = 2,3$. We have $\h3(\cO_{X_e} (2\xi +(1-e))f) =0$ and 
		$$\h2 (\cO_{X_e} (2\xi +(1-e))f) = \begin{cases}
			0& \text{if } e \leq 3 \\
			\binom{e-2}{2}& \text{if } e\geq 4
 		\end{cases}$$
 		by Lemma \ref{lem0}.
 		
 		Hence, $\h3 (\cE^{\vee} \otimes \cE) =0$ and $\h2 (\cE^{\vee} \otimes \cE)  = \begin{cases}
 			0& \text{if } e \leq 3 \\
 			\binom{e-2}{2}& \text{if } e\geq 4
 		\end{cases}$.
 		
 		It remains to compute $\dim \operatorname{Ext}^1 (\cE, \cE) - \dim \operatorname{Ext}^2 (\cE, \cE)$. First notice that $c_i (\cE \otimes \cE^{\vee}) =0$ for $i=1,3$ and $c_2 (\cE \otimes \cE^{\vee}) = 4c_2(\cE) -c_1^2 (\cE)$. Then, we can apply Grothendieck–Riemann–Roch theorem for $\cE \otimes \cE^{\vee}$ and we will have
 		$$ \chi (\cE \otimes \cE^{\vee}) = \frac{1}{6}c_1(T_{X_e})c_2(T_{X_e}) - \frac{1}{2}c_1(T_{X_e})(4c_2 - c_1^2).
 		$$
 		Since we already have showed that $\dim \operatorname{Ext}^1 (\cE, \cE) =1$ and $\dim \operatorname{Ext}^3 (\cE, \cE) =0$, we have $\dim \operatorname{Ext}^1 (\cE, \cE) - \dim \operatorname{Ext}^2 (\cE, \cE) = (6+2e) \alpha +4\beta -(e-1)^2 -3$.
 	\end{proof}
    \begin{remark}
        The $H$-instanton bundle on $(X_e , H)$ constructed in Theorem \ref{thm:existence_serre} is earnest when $e \leq 3$. Because, one can see that 
        $$ \h2 (\cE (-(e+1)f)) = \begin{cases}
            0& \text{if } e \leq 3\\
            \binom{e-2}{2}& \text{if } e \geq 4
        \end{cases}
        $$
        by a similar cohomology computation in the proof Theorem \ref{thm:ext_computation}.
    \end{remark}

     \begin{remark}
        For $e=0$ and $e=2$, the construction above coincides with that in \cite{AM} section 5 and \cite{CG} section 5, respectively.
For $e=1$, however, the construction is slightly different from that in \cite{CCGM}. There they were considered sections of $\cE(f)$ instead of sections of $\cE(\xi)$. In \cite{AMMP}, the following definition is given: let $D$ be an eﬀective divisor, an instanton bundle $\cE$ is a $D$-’t Hooft bundle if and only if
$h^0(\cE(D))\not=0$. In the above construction with $e=1$ and in \cite{CCGM} are considered $\xi$-’t Hooft bundles and $f$-’t Hooft bundles respectively. Among
them, ‘t Hooft instantons for which the zero locus of a minimal global section lies on a hyperplane surface
section of F will be called special ‘t Hooft instantons. In this case the special ’t Hooft bundles are precisely those who are both $\xi$-’t Hooft  and $f$-’t Hooft. In \cite{AMMP} was studied the Del Pezzo threefold of degree 6 $F(0,1,2)$ which has the same hyperplane sections of $\PP(\cO_{\PP^2}\oplus\cO_{\PP^2}(1))$. Therefore, in our case too, we have the following result (see \cite{AMMP} Lemma 7.10):
for $k\geq 2$ there exist $\xi$-’t Hooft bundles which are not $f$-’t Hooft. For the case $k=1$ see \cite{CMP}.
    \end{remark}

    \section{H-instanton bundles on $X_e$ for $e \leq 3$}
	
	Let \( L \) be a curve in the class \( f^2 \) within \( A^2(X_e) \). Since \( L \) is a complete intersection, its structure sheaf has the following resolution:
	\begin{equation}\label{ses: resolution of L}
		0 \rightarrow \mathcal{O}_{X_e} (-2 f) \rightarrow \mathcal{O}_{X_e}(-f)^{\oplus 2} \rightarrow \mathcal{O}_{X_e} \rightarrow \mathcal{O}_L \rightarrow 0.
	\end{equation}
	In particular, we have $\mathcal{N}_{L|X_e} \simeq \mathcal{O}_L^{\oplus 2}$, so the determinant is \(\det(\mathcal{N}_{L|X_e}) = \mathcal{O}_L\).
	
	Moreover, the intersection number satisfies \( f^2 \cdot (\xi + f) =  1\), and we compute
	\[
	\chi(\mathcal{O}_L) = \chi(\mathcal{O}_{X_e}) - 2 \chi(\mathcal{O}_{X_e} (-f)) + \chi(\mathcal{O}_{X_e} (-2f)) = 1,
	\]
	since \(\chi(\mathcal{O}_{X_e} (-f)) = \chi(\mathcal{O}_{X_e} (-2 f)) = 0\) by Lemma \ref{lem0}. Therefore, \( L \) is a line in \( X_e \).
	
	From now on, we denote by \(\Lambda_L\) the component of the Hilbert scheme \({\rm Hilb}^{t+1}(X_e)\) containing these complete intersection curves. Since
	\[
	h^0(\mathcal{N}_{L|X_e}) = 2 \quad \text{and} \quad h^1(\mathcal{N}_{L|X_e}) = 0,
	\]
	we conclude that \(\Lambda_L\) is smooth and has dimension \( 2 \). In fact, $f^2$ is a pull-back of a point in $\mathbb{P}^2$ via projectivization map; that is, $\Lambda_L \simeq (\mathbb{P}^2)^{\vee}$.
	
	\begin{proposition}\label{prop: splitting type of existence_serre}
		Let $\cE$ be a bundle constructed in Theorem \ref{thm:existence_serre}. Then the restriction of $\cE$ to a generic line $L \in |f^2|$ splits as $\cO_{L}^{\oplus2} $.
	\end{proposition}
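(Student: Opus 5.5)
We restrict the defining sequence \eqref{ses:constructed_bundle_by_Serre} to a general line $L\in\Lambda_L$, i.e.\ a fibre of $\pi$. On $L$ we have $\cO_{X_e}(f)\vert_L\cong\cO_L$ and $\cO_{X_e}(\xi)\vert_L\cong\cO_L(1)$, since $\xi\cdot f^2=1$. Moreover $c_1(\cE)\cdot f^2=(e-1)f^3=0$, so $\cE\vert_L\cong\cO_L(a)\oplus\cO_L(-a)$ for some $a\ge 0$. The goal is to show that $a=0$.

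The first step is to choose $L$ disjoint from $Z=\bigcup M_i$. Each $M_i$ has class $\xi f$, so $\pi(M_i)$ is a line in $\pp^2$ (indeed $M_i=\xi_i\cap\pi^{-1}(\ell_i)$). Hence $\pi(Z)$ is a finite union of lines in $\pp^2$. A general point $p\in\pp^2$ lies off all of them, and then $L=\pi^{-1}(p)$ is disjoint from $Z$. For such $L$ we get $\cI_{Z\vert X_e}\vert_L\cong\cO_L$, and the restriction of \eqref{ses:constructed_bundle_by_Serre} stays exact because all the sheaves involved are locally free near $L$. This gives
$$0\to\cO_L(-1)\to\cE\vert_L\to\cO_L(1)\to0 .$$

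The second step, which is the main obstacle, is to show that this extension is non-split. Once that is known, the splitting type follows directly. An extension of $\cO_L(1)$ by $\cO_L(-1)$ has type $\cO_L(1)\oplus\cO_L(-1)$ or $\cO_L^{\oplus2}$, and it is the latter exactly when the extension class in $\operatorname{Ext}^1(\cO_L(1),\cO_L(-1))\cong H^1(\cO_L(-2))\cong\mathbb C$ is non-zero.

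I would argue non-splitness by contradiction, using the generality of $L$. Suppose $\cE\vert_L\cong\cO_L(1)\oplus\cO_L(-1)$ for general $L$. Then $\cE(-\xi)\vert_L$ has a nowhere-vanishing trivial summand, so $h^0(\cE(-\xi)\vert_L)=1$ on a general fibre. Consequently $\pi_*\cE(-\xi)$ is a rank one sheaf on $\pp^2$, say with $\pi_*\cE(-\xi)\supseteq\cO_{\pp^2}(c)$ for some $c$. This produces a non-zero map $\cO_X(\xi+cf)\to\cE$, i.e.\ $h^0(\cE(-\xi-cf))\neq0$. The value of $c$ is controlled by the relative Harder--Narasimhan filtration: the destabilizing sub line bundle $\cO_X(\xi+cf)$ has quotient $\cI_W(-\xi+(e-1-c)f)$, and $c_2(\cE)\cdot$ forces $c$ to be bounded from below. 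Such a map contradicts $\mu$-stability (Proposition \ref{hoppe} and Remark \ref{rem}) as soon as $\delta_H(\xi+cf)>\mu_H(\cE)$, because then $h^0(\cE(-\xi-cf))$ must vanish. Alternatively, and perhaps more cleanly, I would use Lemma \ref{lem1}(i), which gives $H^0(\cE(-\xi+bf))=0$ for $b\le e$, together with the fact that on $X_e$ the sheaf $\pi_*(\cE(-\xi))$ is reflexive. That makes it a line bundle $\cO_{\pp^2}(c)$, and a Chern class computation of the quotient $\cE/\pi^*\pi_*$ then pins down $c$.

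If the global contradiction is hard to close, a fallback is to compute the extension class directly. It is the image of the Serre extension class in $H^1(\cO_{X_e}(-2\xi+(e-1)f)\otimes\ldots)$ restricted to $L$. One can check it is non-zero by comparing with the relative Euler sequence \eqref{Rel} restricted to $L$, which is the non-split sequence $0\to\cO_L(-2)\to\cO_L(-1)^2\to\cO_L\to0$, after twisting by $\cO_L(1)$. In either route, Grauert semicontinuity shows that the trivial splitting type is an open condition, so it suffices to exhibit one line with trivial splitting.
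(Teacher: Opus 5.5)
Your first step matches the paper's. You take $L=\pi^{-1}(p)$ with $p\notin\pi(Z)$ and restrict \eqref{ses:constructed_bundle_by_Serre} to get $0\to\cO_L(-1)\to\cE|_L\to\cO_L(1)\to0$. You also correctly identify non-splitness of this sequence as the whole issue. The paper only notes $h^0(\cE|_L)=2$ and asserts that $s=1$ would give the split extension, so it does not argue this point either.

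\textbf{The stability route does not close.} Suppose the general fibre had type $(1,-1)$. Then $\pi_*\cE(-\xi)\cong\cO_{\pp^2}(c)$, and you get a saturated $\cO_X(\xi+cf)\subseteq\cE$ with quotient $\cI_W(-\xi+(e-1-c)f)$. So $[W]=(\alpha+1+2c)\xi f-c(e-1-c)f^2$, and $[W]\cdot f\ge0$ gives only $c\ge-(\alpha+1)/2$. Lemma~\ref{lem1}(i) rules out only $c\ge-e$, and Proposition~\ref{hoppe} rules out only (roughly) $c\ge-(e+1)/2$. These thresholds do not depend on $\alpha$, while your lower bound weakens as $\alpha$ grows. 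For instance, when $\alpha\ge 2e+3$ the value $c=-(e+2)$ is compatible with the Chern class computation (including effectivity of $[W]$), with $\mu$-stability, and with Lemma~\ref{lem1}.

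\textbf{The fallback is not an argument either.} Since $\cO_X(-2\xi+(e-1)f)$ has no cohomology, $\operatorname{Ext}^1(\cI_Z(\xi),\cO_X(-\xi+(e-1)f))\cong H^0(\cO_Z)$. So the Serre class is purely local along $Z$ and does not come from a global $H^1(X,\cO_X(-2\xi+(e-1)f))$, which is zero. Comparing with the relative Euler sequence does not decide whether its image on $L$ vanishes. You also exhibit no line with trivial splitting, so the semicontinuity remark has nothing to start from.

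\textbf{What is missing} is an input that uses $Z$ itself, not just properties shared by all instantons. Apply $\pi_*$ to $0\to\cO_X(-2\xi+(e-1)f)\to\cE(-\xi)\to\cI_Z\to0$. Since $\pi_*\cO_X(-2\xi+(e-1)f)=0$, this gives $\pi_*\cE(-\xi)\hookrightarrow\pi_*\cI_Z=\cI_{\ell_1\cup\dots\cup\ell_{\alpha+1}}\cong\cO_{\pp^2}(-\alpha-1)$. Here $\ell_i=\pi(M_i)$ are distinct lines, and a form pulled back from $\pp^2$ vanishes on $M_i$ iff it vanishes on $\ell_i$. Hence $c\le-(\alpha+1)$, so $[W]\cdot f=\alpha+1+2c\le-(\alpha+1)<0$, a contradiction. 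Therefore $\pi_*\cE(-\xi)=0$ and the general fibre is trivial.

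Equivalently, over $\pp^2\setminus\bigcup\ell_i$ the extension class is a section of $R^1\pi_*\cO_X(-2\xi+(e-1)f)\cong\cO_{\pp^2}(-1)$. Local freeness of $\cE$ along each $M_i$ forces this section to have a simple pole along $\ell_i$, so it is not identically zero. With either addition your argument becomes a complete proof.
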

	
	\begin{proof}
		We know that $\cE$ fits into the short exact sequence in display \eqref{ses:constructed_bundle_by_Serre}. If we restrict $\cE$ to a generic line $L$ that can be chosen as not intersecting $Z$, then we will have 
		\begin{equation}\label{ses:splitting}
			0 \longrightarrow \cO_{L} (-1) \longrightarrow \cE \otimes \cO_{L} \longrightarrow \cO_{L}(1) \longrightarrow 0.
		\end{equation}
		 Since $\cE|_L$ split, we have $\cE \otimes \cO_{L}= \cO_{L}(s) \oplus \cO_{L}(-s)$ for some $s$. The cohomology of the sequence \eqref{ses:splitting} yields $\h0 (\cE \otimes \cO_{L}) = 2$. So, $s=0$ or $s=1$. But, we obtain the trivial extension in the case of $s=1$. So, $s=0$.
	\end{proof}
	
	\begin{proposition} \label{prop: deformation}
		Let $\cE$ be an $\mu$-stable $H$-instanton bundle on $X_e$ with $c_2(\cE) = \alpha \xi f + \beta f^2$ and $\mathrm{Ext}^2 (\cE, \cE) = \mathrm{Ext}^3 (\cE, \cE) =0$ when $e \leq 3$. Let $L$ be a generic line of class $|f^2|$ such that $\cE \otimes \cO_{L} = \cO_{L}^2$ and $\cE_{\varphi}$ be the kernel sheaf of the general map $\varphi: \cE \rightarrow \cO_L$. Then
		\begin{enumerate}
			\item $c_1 (\cE_{\varphi}) =0$, $c_2 (\cE_{\varphi}) = \alpha \xi f + (\beta +1) f^2$ and $c_3 (\cE_{\varphi}) = 0$.
			\item $\cE_{\varphi}$ is $\mu$-stable.
			\item $\h0 (\cE_{\varphi}) = \h1 (\cE_{\varphi}(-H)) =0$.
			\item $\cE_{\varphi} \otimes \cO_{L^{'}} = \cO_{L^{'}}^2$ for a generic $L^{'} \in |f^2|$.
			\item $\dim \mathrm{Ext}^1 (\cE_{\varphi}, \cE_{\varphi}) = \dim \mathrm{Ext}^1 (\cE, \cE) +4$ and $\mathrm{Ext}^2 (\cE_{\varphi}, \cE_{\varphi}) = \mathrm{Ext}^3 (\cE_{\varphi}, \cE_{\varphi}) =0$.
		\end{enumerate}
	\end{proposition}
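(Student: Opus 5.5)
We work throughout with the elementary modification
\[
0\longrightarrow \cE_\varphi\longrightarrow \cE\stackrel{\varphi}{\longrightarrow}\cO_L\longrightarrow 0,
\]
where $\varphi$ is surjective because $\cE\otimes\cO_L\cong\cO_L^{\oplus 2}$ and $\varphi$ is general.

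For (1) we compute the Chern classes of $\cO_L$ from the Koszul resolution (\ref{ses: resolution of L}): $c_1(\cO_L)=0$, $c_2(\cO_L)=-f^2$, and $c_3(\cO_L)$ is determined by Riemann--Roch and $\chi(\cO_L(t))=t+1$. Whitney's formula then gives $c_1(\cE_\varphi)=c_1(\cE)$, $c_2(\cE_\varphi)=c_2(\cE)+f^2$, and $c_3=0$, since the contributions cancel because $\deg(\cE|_L)=0$. The statement should be read with $c_1(\cE_\varphi)=c_1(\cE)=(e-1)f$.

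For (3) we twist the sequence by $\cO(-H)$. Since $\cO_L(-H)=\cO_L(-1)$ has no cohomology, we get $h^1(\cE_\varphi(-H))=h^1(\cE(-H))=0$, and $h^0(\cE_\varphi)\le h^0(\cE)=0$. For (2) we use Proposition~\ref{hoppe}: $\cE_\varphi\subset\cE$ gives $H^0(\cE_\varphi(B))\subseteq H^0(\cE(B))=0$ for every $B$ in the relevant range. The sheaf $\cE_\varphi$ is torsion free but not locally free, so we apply the criterion in its torsion-free form. Alternatively, any destabilizing rank one subsheaf of $\cE_\varphi$ would also be a subsheaf of $\cE$ with the same slope, contradicting the stability of $\cE$.

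For (4) we take $L'$ disjoint from $L$, which is possible since lines in $|f^2|$ are fibres over distinct points of $\pp^2$. Then $\cE_\varphi|_{L'}=\cE|_{L'}$, and this is trivial for generic $L'$ by hypothesis; genericity is an open condition.

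The main work is (5). We apply $\Hom(-,\cE_\varphi)$ and $\Hom(\cE,-)$ to the defining sequence. This requires the local-to-global groups $\mathrm{Ext}^i(\cO_L,\cO_L)$, $\mathrm{Ext}^i(\cO_L,\cE)$, and $\mathrm{Ext}^i(\cE,\cO_L)=H^i(\cO_L^{2})$. Using $\sExt^1(\cO_L,\cO_X)\cong\det\cN_L=\cO_L$ and $\sExt^2(\cO_L,\cO_X)\cong \omega_L\otimes\omega_X^{-1}|_L=\cO_L(-2)\otimes\cO_L(2)=\cO_L$, where $\omega_X\cdot f^2$ has degree $-2$, we obtain $\mathrm{Ext}^i(\cO_L,\cE)\cong H^{i-2}(\cO_L^2)$. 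This is $\mathbb C^2$ in degree $2$ and zero otherwise. Also $\mathrm{Ext}^i(\cO_L,\cO_L)=H^{i-j}(\wedge^j\cN_L)$, of dimensions $1,2,1$ in degrees $0,1,2$.

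Chasing the sequences, we first get $\mathrm{Ext}^i(\cE_\varphi,\cE)\cong\mathrm{Ext}^i(\cE,\cE)$ for $i\le1$. We then check $\mathrm{Ext}^3(\cE_\varphi,\cE_\varphi)=\Hom(\cE_\varphi,\cE_\varphi\otimes\omega)^\vee=0$ by stability, since $\omega\cdot H^2<0$. For $\mathrm{Ext}^2$ we combine $\mathrm{Ext}^2(\cE,\cE)=0$ with the surjectivity of the relevant connecting maps for general $\varphi$. Concretely, the map $\mathrm{Ext}^1(\cE_\varphi,\cE_\varphi)\to\mathrm{Ext}^1(\cE_\varphi,\cO_L)$ corresponds to moving $L$ and $\varphi$, and we expect it to be surjective. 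This surjectivity is the main obstacle. We would handle it by the standard argument of \cite{CCGM}/\cite{AM2}, using $\cE|_L$ trivial and $h^1(\cN_L)=0$.

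Once $\mathrm{Ext}^2=\mathrm{Ext}^3=0$ and $\dim\mathrm{Ext}^0=1$ are established, the formula $\dim\mathrm{Ext}^1(\cE_\varphi,\cE_\varphi)=\dim\mathrm{Ext}^1(\cE,\cE)+4$ follows from $\chi(\cE_\varphi,\cE_\varphi)$ via Riemann--Roch, as in Theorem~\ref{thm:ext_computation}. Replacing $\beta$ by $\beta+1$ changes $-\chi$ by $4$. The count also agrees with the parameters: $2$ for $L$, $1$ for $\varphi$ up to scalars in $\mathbb P^1$, and $1$ for the smoothing direction.
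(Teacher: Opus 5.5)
\textbf{Gap in (5): the vanishing of $\mathrm{Ext}^2(\cE_\varphi,\cE_\varphi)$ is not proved.}

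Parts (1)--(4) are fine and essentially follow the paper. You are also right that $c_1(\cE_\varphi)=c_1(\cE)=(e-1)f$; the ``$0$'' in the statement is a leftover from the case $e=1$. Your argument for $\mathrm{Ext}^3(\cE_\varphi,\cE_\varphi)=0$ (Serre duality plus stability) is valid, since $K_{X_e}\cdot H^2=-e^2-5e-8<0$. Your Riemann--Roch check that $-\chi(\cE_\varphi,\cE_\varphi)$ grows by $4$ when $c_2$ increases by $f^2$ is also correct.

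The problem is $\mathrm{Ext}^2(\cE_\varphi,\cE_\varphi)=0$, which is the heart of (5); the dimension formula for $\mathrm{Ext}^1$ depends on it. You only say you ``expect'' a map to be surjective and defer to an unspecified ``standard argument''. Moreover, the map you name, $\mathrm{Ext}^1(\cE_\varphi,\cE_\varphi)\to\mathrm{Ext}^1(\cE_\varphi,\cO_L)$, is not the one in the long exact sequence. Applying $\Hom(\cE_\varphi,-)$ and using $\mathrm{Ext}^2(\cE_\varphi,\cE)=0$ gives
\[
\mathrm{Ext}^2(\cE_\varphi,\cE_\varphi)\cong\mathrm{coker}\big(\mathrm{Ext}^1(\cE_\varphi,\cE)\to\mathrm{Ext}^1(\cE_\varphi,\cO_L)\big).
\]
You supply no deformation-theoretic ``moving $L$ and $\varphi$'' argument for this surjectivity, and none is needed.

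\textbf{The missing step.} It is a short direct computation using data you already have.
\begin{enumerate}
\item Apply $\Hom(\cE,-)$. Since $\mathrm{Ext}^i(\cE,\cO_L)=H^i(\cO_L^{\oplus 2})=0$ for $i\ge 1$ and $\mathrm{Ext}^2(\cE,\cE)=\mathrm{Ext}^3(\cE,\cE)=0$, you get $\mathrm{Ext}^2(\cE,\cE_\varphi)=\mathrm{Ext}^3(\cE,\cE_\varphi)=0$.
\item Apply $\Hom(-,\cE_\varphi)$. This yields $\mathrm{Ext}^2(\cE_\varphi,\cE_\varphi)\hookrightarrow\mathrm{Ext}^3(\cO_L,\cE_\varphi)$.
\item Apply $\Hom(\cO_L,-)$ and use $\mathrm{Ext}^3(\cO_L,\cE)=0$. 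Then $\mathrm{Ext}^3(\cO_L,\cE_\varphi)$ is the cokernel of
\[
\mathrm{Ext}^2(\cO_L,\cE)\cong H^0(\cO_L^{\oplus 2})\longrightarrow \mathrm{Ext}^2(\cO_L,\cO_L)\cong H^0(\cO_L).
\]
This map is induced by the nonzero map $\varphi|_L\colon\cO_L^{\oplus2}\to\cO_L$, so it is surjective.
\end{enumerate}
The paper reaches the same vanishing differently: it uses Serre duality, $\mathrm{Ext}^3(\cO_L,\cE_\varphi)\cong\Hom(\cE_\varphi,\cO_L(-2))^\vee$, and a local computation of $\sHom(\cE_\varphi,\cO_L(-2))$.

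\textbf{Two smaller slips.}
\begin{enumerate}
\item $\sExt^1(\cO_L,\cO_{X_e})=0$ for a codimension-two complete intersection. Only $\sExt^2(\cO_L,\cO_{X_e})\cong\det\mathcal{N}_{L|X_e}\cong\cO_L$ survives, and that is what your (correct) conclusion $\mathrm{Ext}^i(\cO_L,\cE)\cong H^{i-2}(\cO_L^{\oplus 2})$ actually uses.
\item $\mathrm{Ext}^1(\cE_\varphi,\cE)$ is not isomorphic to $\mathrm{Ext}^1(\cE,\cE)$. Its dimension is larger by two, because $\mathrm{Ext}^2(\cO_L,\cE)\cong\mathbb{C}^2$ maps to $\mathrm{Ext}^2(\cE,\cE)=0$.
\end{enumerate}
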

	
	\begin{proof}
		We have the short exact sequence
		\begin{equation}\label{ses:kernel sheaf}
			0 \rightarrow \cE_{\varphi} \rightarrow \cE \rightarrow \cO_{L} \rightarrow 0.
		\end{equation}
		Since $c_1 (\cO_{L}) = 0$ and $c_2(\cO_{L}) = -f^2$, we have $c_1 (\cE_{\varphi}) =0$ and $c_2 (\cE_{\varphi}) = \alpha \xi f + (\beta +1) f^2$ thanks to the sequence \eqref{ses:kernel sheaf}. Also, $c_3 (\cE_{\varphi}) = 0$ by a straightforward computation with Chern polynomials.
		
		Notice that each sheaf destabilizing $\cE_{\varphi}$ also destabilize $\cE$, $\cE_{\varphi}$ and $\cE$ have same slope. So, $\cE_{\varphi}$ is also $\mu$-stable.
		
		The cohomology of the sequence  \eqref{ses:kernel sheaf} gives us $\h0(\cE_{\varphi}) \leq \h0 (\cE) =0$; i.e, $\h0(\cE_{\varphi}) =0$. Similarly, $\h1 (\cE_{\varphi} (-H)) \leq \h1(\cE (-H)) + \h0 (\cO_{L} \otimes \cO_{X_e}(-H))$. Since $\cE$ is an $H$-instanton and $\h0 (\cO_{L} \otimes \cO_{X_e}(-H)) \simeq \h0 (\cO_{\PP^1} (-1)) =0$, we have $\h1 (\cE_{\varphi} (-H)) =0$.
		
		Since a generic element of $|f^2|$ is a pull-back of a point in $\pp^2$ via projectivization map, we can choose two generic element of $|f^2|$ disjoint from each other. So, the restriction of sequence \eqref{ses:kernel sheaf} to each general $L^{'} \in |f^2|$ remains exact; that is, $\cE_{\varphi} \otimes \cO_{L^{'}} \simeq \cE \otimes \cO_{L^{'}} = \cO_{L^{'}}^2$.
		
		Now, let us apply $\Hom_{X_e}\big(\cE,-\big)$ to Sequence \eqref{ses:kernel sheaf}. Then, we obtain that $\mathrm{Ext}^j_{X_e}\big(\cE,\cE_\varphi\big)=0$ for $j=2,3$, because $\mathrm{Ext}^j (\cE, \cE) =0$ for $j=2,3$ and $\mathrm{Ext}^i_{X_e}\big(\cE,\cO_L \big) \cong H^i\big(L,\cO_L^{\oplus2}\big) =0$, $i\ge1$. Thus, by applying $\Hom_{X_e} \big(-,\cE_\varphi\big)$ to Sequence \eqref{ses:kernel sheaf} we obtain 
		$$
		\mathrm{Ext}^2_{X_e}\big(\cE_\varphi,\cE_\varphi\big)\subseteq\mathrm{Ext}^3_{X_e}\big(\cO_L,\cE_\varphi\big) \text{ and }
		\mathrm{Ext}^3_{X_e}\big(\cE,\cE_\varphi \big) \twoheadrightarrow \mathrm{Ext}^3_{X_e}\big(\cE_\varphi,\cE_\varphi\big).
		$$
		Since $\mathrm{Ext}^3_{X_e}\big(\cE,\cE_\varphi \big) =0$, we have $\mathrm{Ext}^3_{X_e}\big(\cE_\varphi,\cE_\varphi\big) =0$. On the other hand, 
		Serre duality implies $\mathrm{Ext}^3_{X_e}\big(\cO_L,\cE_\varphi\big)\cong \Hom_{X_e}\big(\cE_\varphi,\cO_L \otimes K_{X_e}\big)^{\vee} $. Since $f^2 \cdot K_{X_e}= -2$, we have $\Hom_{X_e}\big(\cE_\varphi,\cO_L \otimes K_{X_e}\big) = \Hom_{X_e}\big(\cE_\varphi,\cO_L (-2)\big) $. Since $L$ is a complete intersection subscheme in $X_e$, we have the isomorphism $\mathcal{E}xt^i_{X_e}\big(\cO_L,\cO_L\big) \simeq \mathcal{E}xt^{i-1}_{X_e}\big( I_L, \cO_L\big) = \wedge^{i} \mathcal{N}_{L\vert X_e}$. From $\mathcal{N}_{L\vert X_e}\cong\cO_{\mathbb{P}^1}^{\oplus2}$ we deduce  $\mathcal{E}xt^1_{X_e}\big(\cO_L,\cO_L(-2)\big)\cong\cO_L(-2)^{\oplus2}$. By the above equalities, applying $\mathcal{H}om_{X_e}\big(-,\cO_L(-2)\big)$ to Sequence \eqref{ses:kernel sheaf} we obtain the exact sequence
		\begin{align*}
			0\longrightarrow\cO_L(-2)\longrightarrow\cO_L(-2)^{\oplus2}\longrightarrow\mathcal{H}om_{X_e}\big(\cE_\varphi,\cO_L(-2)\big)\longrightarrow\cO_L(-2)^{\oplus2}\longrightarrow0,
		\end{align*}
		because $\mathcal{E}xt^1_{X_e}\big(\cE,\cO_L(-2)\big)=0$. It follows that 
		$$
		\Hom_{X_e}\big(\cE_\varphi,\cO_L(-2)\big)=H^0\big(X_e,\mathcal{H}om_{X_e}\big(\cE_\varphi,\cO_L(-2)\big)\big)=0,
		$$
		hence $\mathrm{Ext}^2_{X_e}\big(\cE_\varphi,\cE_\varphi\big)=0$. Lastly, $\mathrm{Ext}^0_{X_e}\big(\cE_\varphi,\cE_\varphi\big)=1$ thanks to the $\mu$-stability of $\cE_\varphi$, which implies the simpleness of $\cE_\varphi$. It follows that
		$$
		\dim\mathrm{Ext}^1_{X_e}\big(\cE_\varphi,\cE_\varphi\big)=1-\chi(\cE_\varphi,\cE_\varphi).
		$$
		
		By applying $\Hom_{X_e}(\cE_\varphi,-\big)$, $\Hom_{X_e}(-,\cE\big)$ and $\Hom_{X_e}(-,\cO_L\big)$ to Sequence \eqref{ses:kernel sheaf} we deduce that
		$$
		\chi(\cE_\varphi,\cE_\varphi)=\chi(\cE,\cE)-\chi(\cE,\cO_L)-\chi(\cO_L,\cE)+\chi(\cO_L,\cO_L).
		$$
		Also we know that
		$$
		\chi(\cE,\cE)=1-\dim\mathrm{Ext}^1_{X_e}\big(\cE,\cE\big).
		$$
		We have $\chi(\cE,\cO_L)=2\chi(\cO_L)=2$ and $\chi(\cO_L,\cE)=-2\chi(\cO_L(-2))=2$. Finally, we deduce from the cohomology of Sequence \eqref{ses: resolution of L} that $\chi(\cO_L,\cO_L)=0$.
		Thus $\dim\mathrm{Ext}^1_{X_e}\big(\cE_\varphi,\cE_\varphi\big)= \dim\mathrm{Ext}^1_{X_e}\big(\cE,\cE\big) +4$. 
	\end{proof}

    \begin{remark}
		\label{rUnique}
		Let $\cE_\varphi$ be the sheaf defined by Sequence \eqref{ses:kernel sheaf}. If $\Gamma\subseteq X_e$ is  any subscheme of pure dimension $1$, then
		$$
		\mathrm{Ext}^1_{X_e} \big(\cO_\Gamma,\cE_\varphi\big)=\left\lbrace\begin{array}{ll} 
			0\quad&\text{if $L \not\subseteq \Gamma$,}\\
			1\quad&\text{if $L \subseteq \Gamma$.}
		\end{array}\right.
		$$
		Indeed, by applying $\Hom_{X_e} \big(\cO_\Gamma,-\big)$ to Sequence \eqref{ses:kernel sheaf} and taking into account Serre Duality we obtain 
		$$
		\mathrm{Ext}^1_{X_e} \big(\cO_\Gamma,\cE_\varphi\big)\cong \Hom_{X_e} \big(\cO_\Gamma,\cO_L \big)\cong H^0\big(X_e , \sHom_{X_e} \big(\cO_\Gamma,\cO_L\big)\big).
		$$
		We conclude by noticing that
		$$
		\sHom_{X_e} \big(\cO_\Gamma,\cO_L \big)=\left\lbrace\begin{array}{ll} 
			0\quad&\text{if $L\not\subseteq \Gamma$,}\\
			\cO_L \quad&\text{if $L \subseteq \Gamma$.}
		\end{array}\right.
		$$
	\end{remark}
	
	\begin{remark}
		\label{rBiDual}
		The bundle $\cE$ is $\mu$--stable, hence simple. By applying $\Hom_{X_e} \big(-,\cE\big)$ to Sequence \eqref{ses:kernel sheaf} and taking into account Serre Duality we obtain $\Hom_{X_e} \big(\cE_\varphi,\cE\big)\cong \mathbb{C}$. 
		
		By applying $\sHom_{X_e} \big(-,\cO_{X_e} \big)$ to Sequence \eqref{ses:kernel sheaf} we deduce $\cE_\varphi^{\vee\vee}\cong\cE$, because $\sExt^1_{X_e} \big(\cO_L,\cO_{X_e} \big)\cong \sExt^1_{X_e} \big(\cO_L ,\omega_{X_e} \big)\otimes \cO_{X_e} (2\xi + (3-e)f)=0$ by applying the same argument in \cite[Lemma 7.3]{Har}. 
		
		The above isomorphisms show that the inclusion $\cE_\varphi\to\cE$ can be uniquely identified with the canonical monomorphism $\cE_\varphi\to\cE_\varphi^{\vee\vee}$, i.e. Sequence \eqref{ses:kernel sheaf} is canonically isomorphic with 
		\begin{equation*}
			0\longrightarrow\cE_\varphi\longrightarrow\cE_\varphi^{\vee\vee}\longrightarrow\cO_L \longrightarrow0.
		\end{equation*}
	\end{remark}
	
    \begin{theorem}
		For each $\alpha, \beta \in \mathbb{Z}$ with $\alpha > e$ and $\beta \geq 0$, there exists an earnest $\mu$-stable $H$-instanton bundle $\cE$ with charge $c_2(\cE) = \alpha \xi f + \beta f^2$ on $X_e$ where $e \leq 3$ such that 
		\[\operatorname{dim} \operatorname{Ext}^1 (\cE,\cE) = (2e+6)\alpha +4\beta -(e-1)^2 -3, \ \ \ \operatorname{Ext}^2 (\cE, \cE)= \operatorname{Ext}^3(\cE,\cE)=0.
		\]
	\end{theorem}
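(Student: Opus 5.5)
The proof is an induction on $\beta$ with $\alpha>e$ fixed. It follows the standard elementary-modification-and-deformation scheme already set up by Proposition \ref{prop: deformation} and Remarks \ref{rUnique}, \ref{rBiDual}.

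\emph{Base case $\beta=0$.} We take the bundle $\cE$ of Theorem \ref{thm:existence_serre}. It is a $\mu$-stable $H$-instanton with $c_2=\alpha\xi f$. Since $e\le 3$, Theorem \ref{thm:ext_computation} gives $\mathrm{Ext}^2(\cE,\cE)=\mathrm{Ext}^3(\cE,\cE)=0$ and $\dim\mathrm{Ext}^1(\cE,\cE)=(2e+6)\alpha-(e-1)^2-3$. The remark following Theorem \ref{thm:ext_computation} gives $h^2(\cE(-(e+1)f))=0$, so $\cE$ is earnest by Theorem \ref{thm:earnest}. Proposition \ref{prop: splitting type of existence_serre} gives trivial splitting on the generic line of $|f^2|$. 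So all the inductive hypotheses hold at $\beta=0$.

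\emph{Inductive step.} Let $\cE$ be a $\mu$-stable earnest $H$-instanton with $c_2=\alpha\xi f+\beta f^2$, unobstructed ($\mathrm{Ext}^2=\mathrm{Ext}^3=0$), with $\dim\mathrm{Ext}^1$ as in the statement, and trivial on the generic $L\in|f^2|$. Proposition \ref{prop: deformation} produces the non-locally-free sheaf $\cE_\varphi$ with $c_2=\alpha\xi f+(\beta+1)f^2$ (note the proposition writes $c_1(\cE_\varphi)=0$, but it should equal $c_1(\cE)=(e-1)f$). This sheaf is $\mu$-stable, satisfies $h^0=h^1(-H)=0$, is trivial on a generic line, and has $\mathrm{Ext}^2=\mathrm{Ext}^3=0$ and $\dim\mathrm{Ext}^1$ increased by $4$. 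Since $\mathrm{Ext}^2(\cE_\varphi,\cE_\varphi)=0$, the moduli space $\cM$ of $\mu$-stable sheaves with these invariants is smooth at $[\cE_\varphi]$ of dimension $\dim\mathrm{Ext}^1(\cE,\cE)+4=(2e+6)\alpha+4(\beta+1)-(e-1)^2-3$. This is exactly the claimed value. Stability, the vanishing of $h^0$ and $h^1(-H)$, the vanishing of $h^2(\,\cdot\,(-(e+1)f))$ (earnestness, via Theorem \ref{thm:earnest}), the vanishing of $\mathrm{Ext}^2$ and $\mathrm{Ext}^3$, and trivial splitting on a generic line are all open conditions by semicontinuity. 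So a general deformation $\cE'$ of $\cE_\varphi$ in the component of $\cM$ through $[\cE_\varphi]$ inherits them. For $\cE_\varphi$ itself, $h^2(\cE_\varphi(-(e+1)f))=h^2(\cE(-(e+1)f))=0$ because $h^i(\cO_L(-(e+1)f))=0$ for $i\ge1$. The dimension of $\mathrm{Ext}^1$ is then constant, since $\mathrm{Ext}^0=\mathbb C$ and $\mathrm{Ext}^{2}=\mathrm{Ext}^{3}=0$ fix it through $\chi$.

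\emph{Main obstacle: local freeness of $\cE'$.} We must show the general deformation is locally free. The plan is a dimension count on the locus $\cM^{\mathrm{nlf}}$ of non-locally-free sheaves near $[\cE_\varphi]$. By Remark \ref{rBiDual}, every such sheaf $\cG$ satisfies $\cG^{\vee\vee}\cong\cE''$ with $\cG^{\vee\vee}/\cG\cong\cO_{L''}$. Here $\cE''$ ranges over a neighbourhood of $[\cE]$ in its own moduli space, of dimension $\dim\mathrm{Ext}^1(\cE,\cE)$. The line $L''$ moves in $\Lambda_L$, which has dimension $2$. The surjection $\cE''\to\cO_{L''}$ is determined up to scalar by a point of $\mathbb P(H^0(\cO_L^{\oplus2})^\vee)$, which has dimension $1$. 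Hence $\dim\cM^{\mathrm{nlf}}\le\dim\mathrm{Ext}^1(\cE,\cE)+3<\dim\mathrm{Ext}^1(\cE,\cE)+4$, and the general point of the component is a vector bundle. Remark \ref{rUnique} is used to check that the kernel determines $L''$ and the map, so this parametrization is injective up to the scalar. The delicate point is ensuring that every nearby non-locally-free sheaf really is of this form, i.e. that its singular locus is a single line with quotient $\cO_{L''}$. If needed, I would replace this by the upper-semicontinuity argument that the double dual varies in a family and the quotient has constant Hilbert polynomial $t+1$. This completes the induction and gives the statement for all $\beta\ge0$.
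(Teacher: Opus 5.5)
Your architecture is the paper's: induction on $\beta$, the base case from Theorems~\ref{thm:existence_serre} and \ref{thm:ext_computation}, the elementary modification $\cE_\varphi$ of Proposition~\ref{prop: deformation}, and the count $\dim\mathrm{Ext}^1(\cE,\cE)+3<\dim\mathrm{Ext}^1(\cE,\cE)+4$. Your bookkeeping is also fine: openness of the cohomological conditions, earnestness via $h^2(\cE_\varphi(-(e+1)f))=0$, and the $c_1$ typo. However, the step you call delicate is a genuine gap, and it is where almost all of the paper's proof is spent.

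Your sentence ``By Remark~\ref{rBiDual}, every such sheaf $\cG$ satisfies $\cG^{\vee\vee}\cong\cE''$ with $\cG^{\vee\vee}/\cG\cong\cO_{L''}$'' is not justified. That remark only describes sheaves already known to be kernels of some $\cE\to\cO_L$. A priori, a non-locally-free sheaf near $[\cE_\varphi]$ could behave quite differently. Its quotient $\cG^{\vee\vee}/\cG$ could have zero-dimensional components. It could be supported on a curve of higher degree, or on a line of the other class $\xi f-ef^2$ (for instance a line inside a surface of class $\xi-ef$, which also has $H$-degree one). Or the double dual could be reflexive but not locally free. In these cases $\cG^{\vee\vee}$ is not in the component $\cM$ through $\cE$, and your bound $\dim\mathrm{Ext}^1(\cE,\cE)+2+1$ says nothing about that locus. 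So your count only shows that sheaves of the special form $\ker(\cE''\to\cO_{L''})$ do not fill $\cM_1$, not that the general point is a bundle. Your fallback presupposes the conclusion: the double duals are not known to form a flat family, and semicontinuity gives only inequalities, not constancy of the Hilbert polynomial of the quotient.

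What is missing is the paper's analysis of a one-parameter family $\cG_s$ with $\cG_{s_0}=\cE_\varphi$ and $\cG_s\notin\cM_{bad}$ for $s\ne s_0$, assumed non-locally-free for every $s$. Set $T_s=\cG_s^{\vee\vee}/\cG_s$. The argument runs as follows.
(i) $\mu$-semistability gives $h^0(\cG_s^{\vee\vee}(-H))=0$. Together with $h^1(\cG_s(-H))=0$ this gives $h^0(T_s(-H))=0$, so $\Gamma_s=\mathrm{Supp}\,T_s$ is pure of dimension one.
(ii) Restricting to a general hyperplane section $H'$ and comparing with $\cE_\varphi$ by semicontinuity bounds the length of $T_s|_{H'}$ by $1$. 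Hence $\Gamma_s$ is a line and $T_s$ has rank one on it.
(iii) Riemann--Roch for $T_s(-H)$, combined with (i), forces $c_3(\cG_s^{\vee\vee})=0$, so the double dual is locally free.
(iv) Comparing $h^2(\cG_s(a\xi+bf))$ with $h^2(\cE_\varphi(a\xi+bf))=-1-a$ for $a,b\ll0$ forces $[\Gamma_s]=f^2$, which excludes lines of class $\xi f-ef^2$. Hence $T_s\cong\cO_{L_s}$ with $L_s\in\Lambda_L$.
(v) Then $\cG^{\vee\vee}$ is flat over $S$, and by Remark~\ref{rUnique} its fibre at $s_0$ is $\cE$. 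So $\cG_s^{\vee\vee}\in\cM$ and $\cG_s\in\cM_{bad}$, a contradiction.

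Only after (i)--(v) does your dimension count actually finish the proof.
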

	
	\begin{proof}
        We proceed by induction to establish the existence of $H$-instanton bundles on $X_e$ with charge $\alpha \xi f+\beta f^2$ under suitable assumptions on $\alpha,\beta\in \mathbb{Z}$. The initial step of the induction, corresponding to $\beta=0$ has already been treated in Theorem \ref{thm:existence_serre}, while the associated $\operatorname{Ext}^i$-group calculations appear in Theorem \ref{thm:ext_computation}.

        Assume therefore that there exists an earnest $\mu$--stable $H$-instanton bundle $\cE$ with charge $\alpha \xi f+\beta f^2$, where $\alpha \geq e$ and $\beta \geq 0$, satisfying
		$$
		\operatorname{dim} \operatorname{Ext}^1 (\cE,\cE) =(2e+6)\alpha +4\beta -(e-1)^2 -3 ,\qquad  \operatorname{Ext}^2 (\cE,\cE) = \operatorname{Ext}^3 (\cE,\cE)=0.
		$$ 
        Consequently, $\cE$ determines a point of a component $\cM$ of the moduli space of $\mu$--stable rank $2$ sheaves with Chern classes $c_i(\cE)$. This component is smooth at $\cE$ and has dimension
		$$
		\operatorname{dim} \operatorname{Ext}^1 (\cE,\cE) =(2e+6)\alpha +4\beta -(e-1)^2 -3.
		$$
		In Proposition \ref{prop: deformation}, we constructed a sheaf $\cE_\varphi$ that satisfies all the defining properties of an $H$-instanton bundle except locally freeness. This sheaf corresponds to a point in a component $\cM_1$ of the moduli space of $\mu$--stable rank $2$ sheaves with Chern classes $c_i(\cE_\varphi)$. Moreover, $\cM_1$ is smooth at $\cE_\varphi$ with dimension
		$$
		\operatorname{dim} \operatorname{Ext}^1 (\cE,\cE) =(2e+6)\alpha +4\beta -(e-1)^2 +1.
		$$
		
		The subset $\cM_{bad}\subseteq\cM_1$ consisting of such sheaves is parametrized by the choice of $\cE \in \cM$, by a line $L \in \Lambda_L$ and by a point of $\mathbb{P} \big( \mathrm{Hom}_{\mathbb{P}^1} \big(\cO_{\mathbb{P}^1}^{\oplus2}, \cO_{\mathbb{P}^1} \big) \big)$ corresponding to the morphism $\mathrm{Hom}_{X_e} \big( \cE \otimes \cO_L, \cO_L \big)$ up to scalars. It follows that
		$$
		\dim(\cM_{bad})\le (2e+6)\alpha +4\beta -(e-1)^2  < (2e+6)\alpha +4\beta -(e-1)^2 +1 = \dim(\cM_1).
		$$
		Therefore, there exists a flat family $\frak E \to S$ over an integral base such that $\frak E_{s_0}\cong\cE_\varphi$ and $\frak E_s \not\in \cM_1$ for $s \ne s_0$. By \cite[Satz 3]{B--P--S}, we may further assume that $\operatorname{Ext}^2_{X_e}\big(\frak E_s,\frak E_s\big)=0$ for $s\in S$.
		
		For each $s\in S$, there is a canonical exact sequence
		\begin{equation}
			\label{seqBidual}
			0\longrightarrow\frak E_s\longrightarrow(\frak E_s)^{\vee\vee}\longrightarrow\frak T_s\longrightarrow0.
		\end{equation}
		where $\frak T_s$ is a torsion sheaf on $X_e$. Since $(\frak E_s)^{\vee\vee}$ is reflexive, the locus where it fails to be locally free has degree $c_3(\frak E_s)\ge0$, by \cite[Proposition 2.6]{Ha1}.
		
		Because $\frak E_s$ is torsion--free, the support $\Gamma_s$ of $\frak T_s$ has codimension at least $2$ (see \cite[Corollary to Theorem II.1.1.8]{O--S--S}). By semicontinuity we may assume that for general $s \in S$, the sheaf $\frak E_s$ is a $\mu$--stable, satisfies $h^0\big(X_e, \frak E_s \big)=h^1\big(X_e ,\frak E_s(-H)\big)=0$, and restricts as $\frak E_s\otimes\cO_L\cong\cO_{\mathbb{P}^1}^{\oplus2}$ for a general $L \in \Lambda_L$. It remains to verify that $\frak E_s$ is locally free.
		
		Suppose that $h^0\big(X_e , (\frak E_s)^{\vee\vee}(-H)\big)\ne0$. Then there exists a subsheaf $\mathcal L \subseteq (\frak E_s)^{\vee\vee}$ with $\mu(\mathcal L)= H^3>0$. Letting $\mathcal{K}:= \mathcal L\cap\frak E_s$, we have $\mu(\mathcal{K})=\mu(\mathcal L)$, since the cokernel of the map $\mathcal{K} \to \mathcal L$ induced by $\frak E_s \to (\frak E_s)^{\vee \vee}$ has support contained in $\Gamma_s$. This  contradicts the $\mu$--semistability of $\frak E_s$, as $\mathcal{K} \subseteq\frak E_s$. From the cohomology of sequence \eqref{seqBidual} tensored by $\cO_{X_e}(-H)$, we deduce
		\begin{equation}
			\label{VanishingT}
			h^0\big(\Gamma_s,\frak T_s(-H)\big)=0.
		\end{equation}
		Hence $\Gamma_s$ has no embedded points and is therefore pure of dimension $1$. Two cases may occur. 
		
		In the first case, there exists $s\in S$ such that $\Gamma_s=\emptyset$. Then $\frak E_s\cong(\frak E_s)^{\vee\vee}$ is reflexive and since $c_3(\frak E_s)=0$, $\frak E_s$ is a vector bundle. We already know that $\operatorname{Ext}^2_{X_e} \big( \frak E_s,\frak E_s \big)=0$, and a straightforward computation yields the remaining dimensions, completing the proof.
		
		In the second case, $\Gamma_s\ne\emptyset$ for each $s\in S$. Let $H'$ be a general hyperplane section not intersecting the $0$--dimensional locus of points where $(\frak E_s)^{\vee\vee}$ is not locally free. Thus, the restriction of Sequence \eqref{seqBidual} to $H'$ remains exact. Since $H'$ is general and $(\frak E_s)^{\vee\vee}\otimes\cO_{H'}$ is reflexive, it is locally free on $H'$. In particular
		$$
		h^i\big(H',(\frak E_s)^{\vee\vee} \otimes \cO_{H'} (\lambda H) \big)=h^i \big( H' ,\cE \otimes \cO_{H'} (\lambda H)\big)=0
		$$
		for $i\le1$ and $\lambda \ll0$. By semicontinuity, this implies
		\begin{align*}
			h^0 \big( \Gamma_s \cap H' ,\frak T_s \otimes \cO_{H'} \big)&=h^1\big( H' ,\frak E_s \otimes \cO_{H'} (\lambda H) \big) \le \\
			&\le h^1\big(H', \cE_\varphi \otimes \cO_{H'} (\lambda H)\big)=h^0\big(L\cap H',\cO_L \otimes \cO_{H'} \big)=1.
		\end{align*}
		Thus $\Gamma_s$ must be a line. Consequently, $\frak T_s$ decomposes as the direct sum of a locally free sheaf and a torsion sheaf. Equality \eqref{VanishingT} further implies that $\frak T_s$ is  invertible along $\Gamma_s$.
		
		We now compute the class of $\Gamma_s$ in $A^2(X_e)$. For any line bundle $\cO_{X_e} (D)\in\Pic(X_e)$, $c_3((\frak E_s)^{\vee\vee}(D))$, $c_3(\frak E_s(D))$ and $c_2(\frak T_s(D))$ are independent of $D$, by \cite[Lemma 2.1]{Ha1}.  A direct Chern class computation from Sequence \eqref{seqBidual} then gives 
		$$
		c_3(\frak T_s(D))=c_3((\frak E_s)^{\vee\vee})-2Dc_2(\frak T_s).
		$$
		Then Equality \eqref{RRgeneral} for the sheaf $\frak T(D)$ yields
		\begin{equation*}
			\label{RRT}
			\chi(\frak T_s(D))=\frac {c_3((\frak E_s)^{\vee\vee})} 2-(h+D)c_2(\frak T_s).
		\end{equation*}
		Taking $D=-H$ and applying Equality \eqref{VanishingT}, we find
		$$
		0\le \frac {c_3((\frak E_s)^{\vee\vee})} 2=\chi(\frak T_s(D))=-h^1\big(\Gamma_s,\frak T_s(-H)\big)\le0,
		$$
		hence $c_3((\frak E_s)^{\vee\vee})=0$ for all $s\in S$. By \cite[Proposition 2.6]{Ha1}, $(\frak E_s)^{\vee\vee}$ is a vector bundle for each $s\in S$  and $c_3(\frak T_s)=0$.
		
		Write $c_2(\frak T_s)=\eta_s\xi f+\vartheta_s f^2$ and $\cO_{X_e} (D):=\cO_{X_e} (a\xi+b f)$. For $a,b \ll0$, we have $h^1\big(X_e,(\frak E_s)^{\vee\vee}(D)\big)=0$, so Equality \eqref{VanishingT} yields
		\begin{equation}
			\label{A}
			\begin{aligned}
				(1+a)(e \eta_s+\vartheta_s)+(1+b )\vartheta_s&=h^1\big(\Gamma_s,\frak T_s(D)\big)=\\
				&=h^2\big(X_e, \frak E_s(D)\big)\le h^2\big(X_e,\cE_\varphi(D)\big),
			\end{aligned}
		\end{equation}
		where the inequality follows from semicontinuity. For $a,b \ll0$, we also have $h^1\big(X_e,\cE(D)\big)=h^2\big(X_e,\cE(D)\big)=0$, so
		\begin{equation}
			\label{B}
			h^2\big(X_e, \cE_\varphi(D)\big)=h^1\big(L, \cO_L (D) \big)=-1-a. 
		\end{equation}
		Combining Inequality \eqref{A} and Equality \eqref{B} gives
		\begin{equation}
			\label{C}
			(1+a)(e \eta_s+\vartheta_s+1)+(1+b )\eta_s\le0,
		\end{equation}
		for $a,b \ll0$.
		
		As observed in \cite[Example 15.3.1]{Fu},
		$$
		(e+1)\eta_s+\vartheta_s=\deg(c_2(\frak T_s))=-\rk(\frak T_s)\deg(\Gamma_s)=-1. 
		$$
		Substituting into Equality \eqref{C} yields $(b -a)\eta_s<0$ for all $a,b \ll0$, forcing $\eta_s=0$ and hence $\vartheta_s=-1$. Thus $c_2(\frak T_s)=-f^2$ and $\Gamma_s=L_s\in\Lambda_L$. Since $c_3(\frak T_s)=0$, we obtain $\frak T_s\cong\cO_{L_s}$. This determines a morphism $S\to \Lambda_L$, so $\frak T\to S$ is a flat family. 
		
		The exact sequence
		\begin{equation}
			\label{seqDeformFamily}
			0\longrightarrow\frak E\longrightarrow\frak E^{\vee\vee}\longrightarrow\frak T\longrightarrow0
		\end{equation}
		together with the flatness of families $\frak E\to S$ and $\frak T\to S$ implies the flatness of the induced family $\frak E^{\vee\vee}\to S$. In particular $(\frak E^{\vee\vee})_s\cong(\frak E_s)^{\vee\vee}$ for all $s\in S$, and each fiber $(\frak E^{\vee\vee})_s$ is  a vector bundle. Restricting Sequence \eqref{seqDeformFamily} to $F\times\{\ s_0\ \}\subseteq F\times S$ yields a non--split sequence, which must coincide with Sequence \eqref{ses:kernel sheaf} by Remark \ref{rUnique}. Hence $(\frak E_{s_0})^{\vee\vee}\cong\cE$, so $(\frak E_{s})^{\vee\vee}\in\cM$ and therefore $\frak E_s\in\cM_{bad}$ for all $s\in S$, contradicting our initial assumption. We conclude that the case $\Gamma_s\ne\emptyset$ for all $s\in S$ cannot occur.

        Lastly, the property of being earnest follows from  semicontinuity and Theorem \ref{thm:earnest}.
	\end{proof}
\bibliographystyle{amsplain}

\bigskip
\noindent
Ozhan Genc,\\
Faculty of Mathematics and Computer Science, Jagiellonian University,\\
ul. prof. Stanisława Łojasiewicza 6,\\
30-348 Kraków, Poland\\
e-mail: {\tt ozhangenc@gmail.com}

\bigskip
\noindent
Francesco Malaspina,\\
Dipartimento di Scienze Matematiche, Politecnico di Torino,\\
c.so Duca degli Abruzzi 24,\\
10129 Torino, Italy\\
e-mail: {\tt francesco.malaspina@polito.it}

\end{document}